\newtheorem{theorem}{Theorem}
\newtheorem{corollary}{Corollary}
\newtheorem{lemma}{Lemma}
\newtheorem{proposition}{Proposition}
\newtheorem{remark}{Remark}
\newcommand{\Z}{\mathbb{Z}}
\newcommand{\Q}{\mathbb{Q}}
\begin{document}

\author[]{Alexander Olshanskii}
\address{Department of Mathematics, Vanderbilt University, Nashville, TN 37240}
\email{alexander.olshanskiy@vanderbilt.edu}
\thanks{Research of the first author was partially supported by the NSF award DMS-1901976.} 

\author[]{Vladimir Shpilrain}
\address{Department of Mathematics, The City College of New York, New York,
NY 10031} \email{shpilrain@yahoo.com}

\title[Average-case complexity]{Linear average-case complexity of algorithmic problems\\
 in groups}

\begin{abstract}
The worst-case complexity of group-theoretic algorithms has been studied for a long time. Generic-case complexity, or complexity on random inputs, was introduced and studied relatively recently. In this paper, we address the average-case time complexity of the word problem in several classes of groups and show that it is often the case that the average-case complexity is linear with respect to the length of an input word. The classes of groups that we consider include groups of matrices over rationals (in particular, polycyclic groups), some classes of solvable groups, as well as free products. Along the way, we improve several bounds for the worst-case complexity of the word problem in groups of matrices, in particular in nilpotent groups.
For free products, we also address the average-case complexity of the subgroup membership problem and show that it is often linear, too. Finally, we discuss complexity of the {\it identity problem} that has not been considered before.

\end{abstract}

\maketitle

\section{Introduction}

The worst-case time complexity of group-theoretic algorithms has been studied probably since small cancellation groups were introduced and it was noticed that the word problem in these groups admits a linear time (with respect to the length of an input) solution, see e.g. \cite{LS} for relevant algorithms.

Genericity of group-theoretic properties and generic-case complexity of group-theoretic algorithms were introduced more recently, see \cite{KMSS}. The idea of the average-case complexity appeared in \cite{Knuth}, formalized in \cite{Levin}, and was addressed in the context of group theory for the first time in \cite{KMSS2}. (Problem 6 in our Section \ref{problems} touches upon relations between the average-case and generic-case complexity of the word problem.)  Specifically, the authors of \cite{KMSS2} addressed the average-case complexity of the word and subgroup membership problems in some non-amenable groups and showed that this complexity was linear. The fact that the considered groups were non-amenable played an essential role in computing the average-case complexity due to the fact that the cogrowth rate of any non-amenable group is strictly smaller than that of the ambient free group, see \cite{Cohen}, \cite{Grigorchuk}. 
In this paper, we consider classes of groups not considered in \cite{KMSS2} and use a different approach.

Studying average-case (as well as generic-case) complexity of an algorithm requires introducing complexity of inputs. We note that it is arguable which is more natural to consider as inputs of algorithms in group theory -- all group words in a given alphabet $X$ or just freely reduced words. It depends on how the inputs for one algorithm or another are ``sampled". Basically, both ways are natural and the average-case complexity typically does not depend on whether inputs are freely reduced or not, but considering all group words often makes counting arguments easier, and this is what we do in the present paper.

There are several ways to define the average-case time complexity of a group-theoretic algorithm $\mathcal{A}$ that takes words as inputs.
Let $W_n$ denote the set of all words of length $n$ in a finite group alphabet. For a word $w \in W_n$, let $T(w)$ denote the time that the algorithm $\mathcal{A}$ (for a multitape Turing machine) works on input $w$.

One can then define the average-case time complexity of the algorithm $\mathcal{A}$ on inputs of length $n$ as
\vskip -0.5cm

\begin{equation}\label{avcase1}
\frac{1}{|W_n|} \sum_{w\in W_n} T(w).
\end{equation}

In this paper, we address the average-case time complexity of the word problem and the subgroup membership problem in several classes of groups and show that it is often the case that the average-case complexity is linear with respect to the length of an input word, which is as good as it gets if one considers groups given by generators and defining relations, see \cite{sublinear}.


We also mention that the problem in some ``philosophical" sense dual to finding the average-case complexity of the word problem in a given group $G$ is finding ``non-cooperative" sets of words $w$ for which deciding whether or not $w=1$ in $G$ is as computationally hard as it gets; these words are ``responsible" for a high worst-case complexity of the word problem in $G$. This is relevant to properties of the {\it Dehn function} of $G$, see e.g. \cite{Birget}, \cite{MU} and references therein.

In the concluding Section \ref{identity}, we consider the {\it identity problem} whose complexity  has not been considered before, to the best of our knowledge. The problem is: given a variety $\mathcal{G}$ of groups and a group word $w=w(x_1, \ldots, x_m)$, find out whether or not $w(g_1, \ldots, g_m)=1$ for every group $G \in \mathcal{G}$ and every $g_1, \ldots, g_m \in G$. In the beginning of Section \ref{identity}, we explain the differences between the identity problem and the word problem.

To summarize, we establish the following results on the worst-case and the average-case time complexity:

\begin{itemize}

\item[1.] In finitely generated groups of matrices over rationals (in particular, in polycyclic
groups) the worst-case time complexity of the word problem is $O(n \log^2 n)$. For finitely generated nilpotent groups, we show that the worst-case time complexity of the word problem is, in fact, $O(n \cdot \log^{(k)} n)$ for any integer $k\ge 1$,  where $\log^{(k)} n$ denotes the function $\log \ldots \log n$, with $k$ logarithms. This improves the best previously known estimate of $O(n \log^2 n)$ in this case (cf. \cite{MMNV}).

The average-case time complexity of the word problem in groups of matrices is often linear; in particular, this is the case in all polycyclic groups. These results apply, in particular, to finitely generated nilpotent groups. See Section \ref{linear}.
%
\medskip

\item[2.] If a finitely generated group $G$ has polynomial-time worst-case complexity of the
word problem and maps onto 
a (generalized) lamplighter group, then the average-case time complexity of the word problem in $G$ is linear. This applies, in particular, to free solvable groups. See Section \ref{factor}.
\medskip

\item[3.]  In some popular groups including Thompson's group
$F$, the average-case time complexity of the word problem is linear, whereas the worst-case complexity is often superlinear.
See Section \ref{average matrices} and Section \ref{Thompson}.
\medskip

\item[4.] If a finitely generated group $G$ is a free product of nontrivial groups $A$ and $B$,
both having polynomial-time worst-case complexity of the word problem, then $G$ has linear average-case time complexity of the word problem. See Section \ref{freeproducts}.
\medskip

\item[5.] If a finitely generated group $G$ is a free product of nontrivial  groups $A$ and $B$,
both having polynomial-time worst-case time complexity of the subgroup membership problem, then $G$ has linear average-case time complexity of the subgroup membership problem. All subgroups in question should be finitely generated. See Section \ref{membership}.
\medskip

\item[6.] In any product of nilpotent varieties of groups, the worst-case time complexity of the identity
problem is polynomial. See Section \ref{identity}.
\medskip

\item[7.] If in a variety $\mathcal{M}$ of groups the worst-case time complexity of the identity
problem is $O(\exp(n^\theta))$, where $\theta < \frac{1}{2}$, then the average-case time complexity of the identity problem in $\mathcal{M}$ is linear. See Section \ref{identity}.

%

\end{itemize}

\subsection{The average-case complexity vs. the worst-case complexity} \label{Vegas}

There are some obvious, as well as less obvious, relations between the worst-case, the generic-case, and the average-case complexity of an algorithm. Some of them were discussed in \cite{KMSS2}, some are discussed in our Section \ref{problems}.

One obvious relation is that the average-case complexity of an algorithmic problem cannot be higher  than the worst-case complexity. With the word problem, it is often the case that the former is strictly lower than the latter. On the intuitive level, the reason is that inputs $w\in W_n$ for which $T(w)$ is high are ``sparse".
If one is able to quantify this ``sparsity", then one can try to split $W_n$ in a disjoint union $W_n=\cup_j W_n^{(j)}$ of sets $W_n^{(j)}$ where $T(w)$ is the same for any $w \in W_n^{(j)}$, so that the formula (\ref{avcase1}) is stratified as

\begin{equation}\label{avcase2}
\frac{1}{|W_n|} \sum_j |W_n^{(j)}| \cdot T(w \in W_n^{(j)}) = \sum_j \frac{|W_n^{(j)}|}{|W_n|} \cdot T(w \in W_n^{(j)}).
\end{equation}

This formula is not very practical though because the number of summands may be too large.
Keeping in mind that our goal is typically reduced to finding an {\it upper bound} for the average-case complexity rather  than its precise value, we will use a slightly different stratification that is more practical. Let $W_n=\cup_j V_n^{(j)}$ be a disjoint union.
Denote by $\overline{T}(V_n^{(j)})$ an upper bound on $T(w)$ for $w \in V_n^{(j)}$ and use the following sum instead of (\ref{avcase2}):

\begin{equation}\label{avcase3}
\sum_j \frac{|V_n^{(j)}|}{|W_n|} \cdot \overline{T}(V_n^{(j)}).
\end{equation}

In this sum, the number of summands can be rather small (in this paper, it is typically 2 or 3, although it can be larger, as in the proof of Theorem \ref{Theorem 7}),
depending on an upper bound on the average-case complexity one would like to establish.

Thus, our strategy will be to find suitable upper bounds on $\frac{|V_n^{(j)}|}{|W_n|}$ for those $V_n^{(j)}$ where $\overline{T}(V_n^{(j)})$ is high.

An alternative strategy (used in \cite{KMSS2}) is, for a given input, to run two algorithms in parallel. One algorithm, call it {\it honest}, always terminates in finite time and gives a correct result. The other algorithm, a {\it Las Vegas algorithm}, is a fast randomized algorithm that never gives an incorrect result; that is, it either  produces the correct result or informs about the failure to obtain any result. (In contrast, a {\it Monte Carlo algorithm} is a randomized algorithm whose output may be incorrect with some (typically small) probability.)

A Las Vegas algorithm can improve the time complexity of an honest, ``hard-working", algorithms that always gives a correct answer but is slow. Specifically, by running a fast Las Vegas algorithm and a slow honest algorithm in parallel, one often gets another honest algorithm whose average-case complexity is somewhere in between because there is a large enough proportion
of inputs on which a fast Las Vegas algorithm will terminate with
the correct answer to dominate the average-case complexity. This idea was used in \cite{KMSS2} where it was shown, in particular, that if a group $G$ has the word problem solvable in subexponential time and if $G$ has a non-amenable factor group where the word problem is solvable in a complexity class $\mathcal{C}$, then there is an honest algorithm that solves the word problem in $G$ with average-case complexity in $\mathcal{C}$.

\section{The word problem in linear groups}\label{linear}

Many interesting groups can be represented by matrices over a field. This includes polycyclic (in particular, nilpotent) groups \cite{Swan}, braid groups, and some mapping class groups. (In general it is unknown whether all mapping class groups are linear.) We note though that even where it is known that groups can be represented by matrices over a field of characteristic 0 (e.g. torsion-free metabelian groups \cite{Wehrfritz} or braid groups \cite{Bigelow}, \cite{Krammer}), it is often not known whether these groups can be represented by matrices over rationals.

Suppose $g_1, \ldots, g_r$ are generators of a group $G$ of matrices over integers, and let $w=w(x_1, \ldots, x_r)$ be a group word of length $n$. The algorithm that decides whether or not $w(g_1, \ldots, g_r)=1$ in $G$ is straightforward: going left to right, one multiplies by a matrix corresponding to $g_i^{\pm 1}$ that occurs next in the word $w$. This matrix multiplication takes time $O(N)$, where $N$ is the sum of lengths (in whatever presentation, say binary or decimal) of all entries of the current matrix. This $N$ is increasing (by an additive constant) after each multiplication by a $g_i^{\pm 1}$, so we have to consider $N$ to be $O(n)$. Since the total number of matrix multiplications is $n$, the algorithm takes time $O(n^2)$. In Section \ref{integers}, we show that this can actually be improved to $O(n \log^2 n)$, and not only for groups of matrices over integers, but also over rationals and over any finite algebraic extension of the field $\Q$ of rationals.

We start our discussion with a rather special class of groups of {\it unitriangular} matrices. These include all finitely generated torsion-free nilpotent groups.

\subsection{The word problem in nilpotent groups and in groups of unitriangular matrices}\label{nilpotent}

Finitely generated torsion-free nilpotent groups are representable by  unitriangular matrices over integers. When one multiplies uni(upper)triangular matrices, the entries do not grow exponentially with respect to the number of matrices (from a finite collection) multiplied. In fact, we have:

\begin{lemma}\label{unitriangular} Let ${\bf B}=\{B_1, \ldots, B_s\}$ be a finite collection of uni(upper)triangular matrices.
The absolute value of the $(i,j)$th entry in any product of $n$ matrices, each of which comes from this collection, is bounded by a polynomial in $n$ of degree $j-i$, and therefore the total bit length of the entries of a matrix exhibits sublinear growth $O(\log n)$.
\end{lemma}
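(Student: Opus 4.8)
The plan is to prove the entry bound by induction on $n$, the number of factors in the product. Write $M = B_{k_1} B_{k_2} \cdots B_{k_n}$ for some choice of indices $k_1, \ldots, k_n \in \{1, \ldots, s\}$, and let $C$ be a common bound on $|{(B_\ell)}_{i,j}|$ over all $\ell$ and all $i \le j$ (note $(B_\ell)_{i,i} = 1$ and $(B_\ell)_{i,j} = 0$ for $i > j$ since the matrices are unitriangular upper-triangular). The claim to prove by induction is that for every product $M$ of $n$ such matrices, $|M_{i,j}| \le P_{j-i}(n)$ for a fixed polynomial $P_d$ of degree $d$ (independent of the particular product), with $P_0 \equiv 1$ and $M_{i,j} = 0$ whenever $i > j$.

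First I would handle the base case $n = 1$, which is immediate from the definition of $C$ together with $P_d(1) \ge C$ (choose the $P_d$ large enough at the end). For the inductive step, write $M = M' B_{k_n}$ where $M'$ is a product of $n-1$ factors. Since both matrices are upper unitriangular, so is $M$, so I only need to bound $M_{i,j}$ for $i < j$. Expanding the matrix product,
\[
M_{i,j} = \sum_{\ell = i}^{j} M'_{i,\ell} \, (B_{k_n})_{\ell,j} = M'_{i,j} + \sum_{\ell=i}^{j-1} M'_{i,\ell}\,(B_{k_n})_{\ell,j},
\]
where I have separated the $\ell = j$ term (for which $(B_{k_n})_{j,j} = 1$). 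Applying the inductive hypothesis to $M'$ and the bound $C$ to the entries of $B_{k_n}$, I get
\[
|M_{i,j}| \le P_{j-i}(n-1) + C \sum_{\ell=i}^{j-1} P_{\ell - i}(n-1).
\]
The key observation is that each term $P_{\ell-i}(n-1)$ appearing in the sum has degree $\ell - i \le j - i - 1$, strictly less than $j - i$; so the whole right-hand side is a polynomial in $n$ of degree at most $j-i$ (the leading behavior coming from $P_{j-i}(n-1)$, whose degree is exactly $j-i$). One then chooses the polynomials $P_d$ recursively, so that $P_d(n) \ge P_d(n-1) + C\sum_{d'=0}^{d-1} P_{d'}(n-1)$ for all $n \ge 1$ and $P_d(1) \ge C$; this is possible because the right-hand side is a polynomial of degree $d$ in $n$, so it suffices to take $P_d$ with a large enough leading coefficient and constant term. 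This closes the induction and gives the degree-$(j-i)$ polynomial bound.

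For the final clause, since the matrices are $m \times m$ for a fixed $m$ (the size is determined by the collection $\mathbf{B}$), every entry of a length-$n$ product is an integer of absolute value at most $P_{m-1}(n) = O(n^{m-1})$, hence has bit length $O(\log n)$; summing over the fixed number $m^2$ of entries still gives total bit length $O(\log n)$. I do not expect a serious obstacle here: the only mildly delicate point is organizing the induction so that a single polynomial $P_d$ works uniformly for all products of a given length (rather than getting a bound that degrades as the product varies), and this is handled cleanly by the recursive choice of the $P_d$ described above.
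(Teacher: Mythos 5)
Your argument is correct and follows essentially the same route as the paper's: peel off the last factor $B_{k_n}$, expand the $(i,j)$ entry of the product, and run an induction on $n$ together with a recursion on the superdiagonal index $j-i$; the paper just makes your abstract polynomials $P_{j-i}$ explicit as $C^{j-i}n^{j-i}$ (with $C$ chosen so that $|b_{ij}|\le C^{j-i}$), which lets it close the induction via $\sum_{t=0}^{j-i}(n-1)^t\le n^{j-i}$ instead of your recursive choice of leading coefficients. The concluding $O(\log n)$ bit-length observation is handled identically in both.
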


\begin{proof}
Let $C \ge 2$ be a constant such that the absolute value of any entry $b_{ij}$ of any matrix $B_k$ is bounded by $C^{j-i}$.

Denote an arbitrary product of $n$ matrices, each of which comes from the collection ${\bf B}$, by $A(n)$.
We are going to show that for any entry $a_{ij}(n)$ of the matrix $A$, one has $|a_{ij}(n)|  \le C^{j-i} n^{j-i}$. We will use simultaneous induction on $(j-i)$ (with an obvious basis $j-i=0$) and on $n$ (with an obvious basis $n=1$).

Multiply $A(n-1)$ by one of the matrices $B_k$ on the right. Then, by using the inductive assumptions we get:

\noindent $|a_{ij}(n)|= |1\cdot b_{ij}+ a_{i, i+1}(n-1)b_{i+1,j}+...+
a_{i,j-1}(n-1)b_{j-1,j} + a_{i,j}(n-1)\cdot 1 |\\
 \le C^{j-1}(1 +(n-1)+...+(n-1)^{j-i})\le C^{j-i} n^{j-i}$ ~since $n \ge 2$.

\end{proof}

Therefore, we have:

\begin{lemma}\label{torsion-free1}
In any group $G$ of uni(upper)triangular matrices over integers generated by matrices $g_1, \ldots, g_k$, there is an algorithm $\mathcal A$ that evaluates any product of $n$ matrices $g_i^{\pm 1}$ in time  $O(n \cdot \log n)$.
\end{lemma}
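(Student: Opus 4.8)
The plan is to evaluate the product left to right, maintaining after step $m$ the partial product $A(m)=g_{i_1}^{\varepsilon_1}\cdots g_{i_m}^{\varepsilon_m}$ as a $d\times d$ array of binary integers on a work tape, where $d$ is the fixed size of the matrices. Since every $g_i^{\pm1}$ is again a unitriangular integer matrix, the collection $\mathbf{B}=\{g_1^{\pm1},\ldots,g_k^{\pm1}\}$ is a finite collection of unitriangular matrices, so Lemma \ref{unitriangular} applies to it: every entry of every $A(m)$ with $m\le n$ has absolute value bounded by a fixed polynomial in $n$, hence bit length $O(\log n)$, and the whole array $A(m)$ occupies $O(\log n)$ tape cells throughout the computation.

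At step $m$ the algorithm reads the next letter of $w$ (time $O(1)$ on the input tape), identifies the corresponding matrix $B=g_{i_m}^{\varepsilon_m}\in\mathbf{B}$, and overwrites $A(m-1)$ by $A(m)=A(m-1)B$. The key point giving the $O(n\log n)$ rather than an $O(n\log^2 n)$ bound is that the entry $(A(m))_{ij}=\sum_{\ell}(A(m-1))_{i\ell}B_{\ell j}$ is a sum of $d$ terms, each a product of an integer of bit length $O(\log n)$ by the \emph{bounded} integer $B_{\ell j}$. Multiplying an $L$-bit integer by a constant takes time $O(L)$, and adding $O(\log n)$-bit integers takes time $O(\log n)$, so each of the $d^2$ entries of $A(m)$ is produced in time $O(\log n)$; since $d$ is a fixed constant, the whole matrix multiplication, and the bookkeeping needed to rewrite the array (a bounded number of sweeps over the $O(\log n)$-cell representation of $A(m-1)$, using some $O(\log n)$-size scratch space on a second tape and then swapping the roles of the tapes), costs $O(\log n)$ per step. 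Summing over the $n$ steps, and adding the $O(n)$ time to scan the input, gives total running time $O(n\log n)$, and the algorithm outputs $A(n)$, as required.

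The only thing that really needs care is the claim that the two operands in each scalar multiplication have the stated sizes. The bound $O(\log n)$ on the bit length of the large operand, uniform in $m\le n$, is exactly what Lemma \ref{unitriangular} supplies (the polynomial degree $d-1$ is a constant, so $\log\big(n^{d-1}\big)=O(\log n)$); and the fact that the other operand is of constant size is guaranteed only because at each step we multiply the running product by a single \emph{generator} matrix, never by another large running product. I expect this to be the main subtlety: if one instead combined two $O(\log n)$-bit matrices (as in a balanced binary splitting of the word), schoolbook integer multiplication would cost $O(\log^2 n)$ per combination, so the strict left-to-right scheme is what makes the $O(n\log n)$ bound go through.
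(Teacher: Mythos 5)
Your proposal is correct and follows exactly the argument the paper intends (the paper derives Lemma \ref{torsion-free1} directly from Lemma \ref{unitriangular} without writing out the details): left-to-right accumulation, with each step multiplying an $O(\log n)$-bit partial product by a constant-size generator matrix at cost $O(\log n)$, for a total of $O(n\log n)$. Your closing remark correctly identifies the key point — that multiplying by a bounded-size generator rather than by another large partial product is what avoids the $O(\log^2 n)$ per-step cost — which is precisely the distinction the paper itself exploits later in the proof of Proposition \ref{multilog}.
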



%
%
%
%

The complexity can actually be pushed even further down. Denote by  $\log^{(k)} n$ the function $\log \ldots \log n$, with $k$ logarithms. To avoid situations where the function $\log^{(k)} n$ is undefined, we define $\log^{(k)} n =1$ if there is $j\le k$ such that $\log^{(j)} n <1$. Also, to be more specific, we are going to interpret the $\log$ function as logarithm with base 2.

\begin{proposition}\label{multilog}
For any integer $k \ge 1$, there is an algorithm $\mathcal A_k$ that evaluates any product $w$ of $n$ uni(upper)triangular matrices $g_i^{\pm 1}$ (from a finite collection) in time  $O(n \cdot \log^{(k)} n)$.

\end{proposition}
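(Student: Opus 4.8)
The plan is to evaluate the matrix product corresponding to $w$ not by the left-to-right scheme of Lemma \ref{torsion-free1} (which costs $O(n\log n)$) but by a balanced ``tournament''. First scan $w$ once to read off $n$ and put $\ell:=\lfloor\log^{(k-1)}n\rfloor$; assume $n$ is large enough that $\ell\ge2$ (the finitely many smaller $n$ are handled trivially, since there the input has bounded length). Cut $w$ into $m:=\lceil n/\ell\rceil$ consecutive blocks of length $\le\ell$ and evaluate each block's matrix product by Lemma \ref{torsion-free1}; this stage costs $O\!\bigl(\tfrac n\ell\cdot\ell\log\ell\bigr)=O(n\log\ell)=O(n\log^{(k)}n)$ and produces a list of $m$ matrices. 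Now multiply these $m$ matrices together in rounds: in each round pass once over the current list, replacing it by the list of products of consecutive pairs (a lone trailing matrix is carried over unchanged), until a single matrix — the value of $w$ — remains.

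Everything hinges on the sizes of the matrices that appear. Each is a product of some number $p$ of \emph{consecutive} original generators $g_i^{\pm1}$; after $t$ tournament rounds one has $p\le 2^t\ell$ and there are at most $\lceil m/2^t\rceil$ such matrices. By Lemma \ref{unitriangular}, applied to the fixed finite generating collection, the $(a,b)$ entry of a product of $p$ consecutive generators is at most a fixed polynomial of degree $b-a$ in $p$; as the dimension is a constant, each such matrix therefore has bit length $O(\log p)=O(t+\log\ell)$, and multiplying two of them by schoolbook arithmetic costs $O\!\bigl((t+\log\ell)^2\bigr)$. Hence round $t$ costs $O\!\bigl(\tfrac m{2^t}(t+\log\ell)^2\bigr)$, and since $\sum_{t\ge1}(t+\log\ell)^2/2^t=O((\log\ell)^2)$, the tournament costs $O\!\bigl(m(\log\ell)^2\bigr)=O\!\bigl(\tfrac n\ell(\log\ell)^2\bigr)$, which is $O(n\log^{(k)}n)$ because $\log\ell=O(\log^{(k)}n)$ and $\log\ell\le\ell$. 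Summing the two stages proves the proposition. (With $\ell=1$, i.e.\ running the tournament directly on the letters of $w$, the same computation gives $O\!\bigl(n\sum_{t\ge1}t^2/2^t\bigr)=O(n)$; the blocking is included only to display the bound in the stated $\log^{(k)}$ form.)

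What requires care is the realization on a multitape Turing machine; I expect this routine-but-finicky accounting to be the only real work. Each round should be a single left-to-right streaming pass: read the current list, whose matrices are separated by a marker symbol, from one tape; for each consecutive pair copy the two matrices onto work tapes, multiply, write the product with a marker onto a second tape, then blank and rewind the work tapes; finally swap the two list tapes and rewind for the next round. With this organization the per-pair cost at round $t$ is dominated by one multiplication of two $O(t+\log\ell)$-bit integers, so the estimates above are honest, and no recursion or random access is needed. Conceptually the argument is immediate from Lemmas \ref{torsion-free1} and \ref{unitriangular}; the single place where ``uni(upper)triangular'' is used is the bit-length bound of Lemma \ref{unitriangular}, and it is essential — it is exactly what keeps a round-$t$ matrix at bit length $O(t)$ rather than $2^{\Theta(t)}$, so that $\sum_t \tfrac m{2^t}(t+\log\ell)^2$ converges; without it the tournament would be no faster than naive evaluation.
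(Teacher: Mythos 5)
Your argument is correct, but it follows a genuinely different route from the paper's. The paper proceeds by induction on $k$: the algorithm $\mathcal A_{k+1}$ cuts $w$ into $O(n/(\log n)^k)$ blocks of length about $(\log n)^k$, evaluates each block by the previously constructed $\mathcal A_k$, and then accumulates the block values \emph{left to right}; the accumulation is cheap because the running product has bit length $O(\log n)$ while each block value has bit length $O(\log\log n)$, so each of the $O(n/(\log n)^k)$ accumulation steps costs only $O(\log n\cdot\log\log n)$. You instead use a single, non-recursive two-stage scheme: blocks of length $\ell=\lfloor\log^{(k-1)}n\rfloor$ evaluated by Lemma \ref{torsion-free1}, followed by a balanced pairwise tournament, the point being that a round-$t$ matrix is still a product of at most $2^t\ell$ \emph{consecutive} generators, so Lemma \ref{unitriangular} caps its bit length at $O(t+\log\ell)$ and the round costs decay geometrically. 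Your bookkeeping checks out (including the Turing-machine tape traffic, which is $O\bigl(\tfrac{m}{2^t}(t+\log\ell)\bigr)$ per round and hence $O(n)$ in total), and your approach dispenses with the induction and with tracking the constants $c_k$ of Remark \ref{allows}. What deserves emphasis is your closing parenthesis: with $\ell=1$ the same computation gives $\sum_{t\ge1}O\bigl(\tfrac{n}{2^t}t^2\bigr)=O(n)$, i.e.\ a \emph{linear-time} evaluation of the product. I see no gap in that claim; it is strictly stronger than the proposition for every $k$, it makes Theorem \ref{sub} (and the whole diagonalization over the family $\{\mathcal A_k\}$) immediate, and it upgrades Theorem \ref{nilpotent} to a linear worst-case bound. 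The one ingredient that powers this — and that fails for general integer matrices in Proposition \ref{integers}, where the same balanced tree only yields $O(n\log^2 n)$ because entries grow exponentially — is exactly the polynomial entry growth of Lemma \ref{unitriangular}, which you correctly single out as essential. If you keep the write-up as is, you should either promote the $\ell=1$ observation to the main argument or say why you are stating only the weaker $O(n\log^{(k)}n)$ bound.
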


\begin{proof}
The proof is by induction on $k$, with the basis $k=1$ provided by Lemma \ref{torsion-free1}. 

The algorithm $\mathcal A_{k+1}$ for $k \ge 1$ works as follows. Note that we can compute an approximation of $\log n$, and therefore of $\log^k n$, by an integer $n_k$ up to a factor $(1+o(1))$, in time $O(n)$ since $\log n$ is the length of $n$ (in binary or whatever form).

Then, again in time $O(n)$, the input word $w$ of length $n$ can be split as a product of $m=\lfloor \frac{n}{n_k}\rfloor$ subwords $f_i$ of length $O(\log^k n)$, i.e., $w = f_1 \ldots f_m$.

Now the algorithm $\mathcal A_{k+1}$ works in $m$ stages. Suppose that after $(i-1)$ stages the algorithm has computed the matrix $A_{i-1}$ that corresponds to the product $f_1 \ldots f_{i-1}$.
At the stage $i$, the algorithm first computes the matrix $M_{i}$ that corresponds to the subword  $f_i$. This can be done by using the algorithm $\mathcal A_{k}$, and therefore this takes time $O(\log^k n \cdot \log^{(k)}(\log^k n))=O(\log^k n \cdot \log^{(k+1)} n)$, independent of $i$.

Continuing with the stage $i$, the algorithm $\mathcal A_{k+1}$ computes the product $A_{i-1} M_{i}$. Given the bound  on the size of entries in these matrices (see the first paragraph of this subsection), this computation takes time
$O(\log n \cdot (\log (\log^{k} n)))= O(\log n \cdot \log^{(2)}n)$. (Here we use the fact that multiplying a $k$-bit integer by an $l$-bit integer takes time $O(k\cdot l)$ if one uses the standard ``school algorithm" for integer multiplication.)

Thus, the whole stage $i$ takes time
 $O(\log^k n \cdot \log^{(k+1)} n) + O(\log n \cdot \log^{(2)}n)$.

Finally, we multiply the latter estimate by the number $O(\frac{n}{\log^k n})$ of stages and get\\
$(O(\log^k n \cdot \log^{(k+1)} n) + O(\log n \cdot \log^{(2)}n)) \cdot O(\frac{n}{\log^k n}) =  O(n \cdot \log^{(k+1)} n)$. This completes the proof.

\end{proof}

\begin{remark}\label{allows}
The inductive argument in the proof of Proposition \ref{multilog} allows one to compute constants $c_k >0$ such that $O(n \cdot \log^{(k)} n)$ in the statement of Proposition \ref{multilog} can be replaced by $\le c_k n \log^{(k)} n$. Moreover, the procedure of  constructing the algorithm $\mathcal A_{k+1}$ from $\mathcal A_{k}$ also allows one to compute a constant $c$ such that $c_{k+1}\le c \cdot c_k$ for any $k$.
\end{remark}

This can be improved  even further:

\begin{theorem} \label{sub}
There is an algorithm $\mathcal B$ that evaluates any product $w$ of $n$ uni(upper)triangular matrices $g_i^{\pm 1}$ (from a finite collection) in time at most $T(n)$, where $T(n)=o(n \log^{(k)} n))$ for any integer $k \ge 1$.

\end{theorem}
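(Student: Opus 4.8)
The plan is to iterate the construction of Proposition~\ref{multilog}, but with the number of iterations growing slowly with $n$ rather than being a fixed constant $k$. Recall that Proposition~\ref{multilog} produces, for each fixed $k$, an algorithm $\mathcal A_k$ running in time $\le c_k\, n \log^{(k)} n$, and Remark~\ref{allows} tells us that the constants satisfy $c_{k+1} \le c\cdot c_k$ for an absolute constant $c$, so $c_k \le c_1 c^{k-1}$. The idea is to choose, on input of length $n$, an integer $k=k(n)$ that tends to infinity with $n$, and run $\mathcal A_{k(n)}$. If $k(n)$ grows slowly enough that the factor $c_{k(n)} \le c_1 c^{k(n)-1}$ is absorbed — i.e. $c^{k(n)} \cdot \log^{(k(n))} n = o(\log^{(j)} n)$ for every fixed $j$ — then the resulting single algorithm $\mathcal B$ has the claimed running time. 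This is an ``Ackermann-style'' diagonalization: the bound $T(n)$ is not of the form $n\log^{(k)}n$ for any $k$, but beats all of them simultaneously.

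First I would fix the bookkeeping: by Remark~\ref{allows} we may take $c_k \le A\cdot B^k$ for explicit constants $A,B$. Next I would choose $k(n)$ to be, say, the largest integer $k$ with $B^{2k} \le \log^{(k)} n$ (or any comparable slowly-growing choice; one wants $k(n)\to\infty$ while $B^{k(n)}$ stays tiny compared with every fixed iterated logarithm of $n$). One must check that $k(n)$ is computable from $n$ in time $O(n)$ — this is routine, since computing $\log^{(j)} n$ for $j=1,2,\ldots$ amounts to repeatedly taking bit-lengths, and the number of meaningful iterations before the value drops below $1$ is $O(\log^* n)$, each step cheap. Having computed $k=k(n)$, the algorithm $\mathcal B$ simply simulates $\mathcal A_k$ on $w$. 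For the running-time estimate: $\mathcal A_k$ runs in time $\le c_k\, n\log^{(k)} n \le A B^{k(n)} n \log^{(k(n))} n$, and by the choice of $k(n)$ we have $B^{k(n)} \le (\log^{(k(n))} n)^{1/2}$, so this is at most $A\, n\, (\log^{(k(n))} n)^{3/2}$. Since $k(n)\to\infty$, for any fixed $j$ we eventually have $k(n) > j+1$, whence $\log^{(k(n))} n \le \log^{(j+1)} n = o\big(\log^{(j)} n\big)$; so $T(n) = o(n \log^{(j)} n)$ for every $j$, as required.

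One subtlety deserves care: the algorithm $\mathcal A_k$ is \emph{defined} recursively, and naively its \emph{description} depends on $k$, so letting $k$ depend on the input means $\mathcal B$ must itself carry out the recursive unfolding. I would address this by noting that the recursion in Proposition~\ref{multilog} is perfectly uniform — the step ``$\mathcal A_{k+1}$ from $\mathcal A_k$'' is one fixed subroutine (split $w$ into blocks of length $\approx \log^k n$, evaluate each block by $\mathcal A_k$, multiply the running product) — so a single Turing machine $\mathcal B$ can, given $n$ and $k=k(n)$, execute this unfolding to depth $k(n)$ with only $O(k(n))$ overhead in control, which is negligible. The other routine checks are that the block-splitting and the $O(n)$-time preprocessing in each level of recursion still go through when $k$ is this particular function of $n$ rather than a constant; these are the same estimates as in Proposition~\ref{multilog} and the induction constants are exactly the $c_k$ of Remark~\ref{allows}.

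The main obstacle — and the only place real thought is needed — is the constant-tracking: one must be sure that the multiplicative blow-up in passing from $\mathcal A_k$ to $\mathcal A_{k+1}$ is genuinely bounded by a \emph{fixed} factor $c$ independent of $k$ (this is exactly what Remark~\ref{allows} asserts), for otherwise $c_{k(n)}$ could grow like $k(n)!$ or worse and would swamp the gain from the extra logarithm. Granting that remark, the diagonalization is clean. A secondary point worth a sentence is that $T(n)$ so produced is still a genuine, explicitly computable time bound (it is $A\,n\,(\log^{(k(n))}n)^{3/2}$ with $k(n)$ as above), so $\mathcal B$ is an honest algorithm, not merely an existence statement.
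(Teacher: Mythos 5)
Your proposal is correct and follows essentially the same route as the paper: diagonalize over the algorithms $\mathcal A_k$ by choosing $k=k(n)\to\infty$ slowly enough (computable in time $O(n)$, with the program for $\mathcal A_{k(n)}$ built on the fly in $O(k(n))$ overhead) that the geometric blow-up $c_k\le A\cdot B^k$ from Remark~\ref{allows} is dominated by the gain of an extra iterated logarithm. The paper's specific choice is the largest $k$ with $\exp^{(k)}2\le n$ (so $2\le\log^{(k)}n<4$) rather than your $B^{2k}\le\log^{(k)}n$, but this is an immaterial difference.
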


\begin{proof}
The plan of the proof is as follows. Given an input word $w$, the algorithm $\mathcal B$ will first determine the length $n$ of $w$; this can be done in time $O(n)$. Then $\mathcal B$ will compute an integer $k=k(n)$; this $k$ will tell which algorithm $\mathcal A_k$ (from Proposition \ref{multilog}) should be used on the word $w$.

Having figured out the right $k$, the algorithm $\mathcal B$ will create a program for the algorithm $\mathcal A_k$ (this step is needed because we cannot just store programs for infinitely many $\mathcal A_k$). This, too, will be done in time $O(n)$.

Now we are going to describe how exactly the integer $k=k(n)$ is computed. By Remark  \ref{allows}, one can recover (integer) constants $c_k>0$ such that the algorithm $\mathcal A_k$ works on any word $w$ of length $n$ no longer than $c_k \cdot n \log^{(k)} n$, and there is an (integer) constant $c>0$ such that  $c_k < c \cdot c_{k-1}$, implying $c_k < c^k$.

Denote by $\exp^{(k)} 2$ the tower of exponents $2^{2^{\ldots}}$ of height $k$. Then, given $n\ge 4$, let $k$ be the largest natural number such that $\exp^{(k)}2\le n$, i.e.,

\begin{equation}\label{exp}
2 \le \log^{(k)}n <4.
\end{equation}

Obviously, this $k=k(n)$ goes to infinity when $n$ goes to infinity. Also, this $k$ can be computed in time bounded by $c \cdot n$ for some constant $c>0$ because $\exp^{(k-1)} 2$ is not greater than the number of bits in the binary form of $n$.

Next, we note that the time of creating a program for the algorithm $\mathcal A_k$ is bounded by $c \cdot k < c \cdot n$ (we can use the same constant $c>0$ as in the previous paragraph for convenience) because a program for $\mathcal A_k$ can be obtained from a program for $\mathcal A_{k-1}$ by adding a few commands whose number is bounded by a constant.

Thus, the total runtime of the algorithm $\mathcal B$ is bounded by

\begin{equation}\label{exp2}
2cn +c^k n \log^{(k)} n \le  2c n + 8c^k n \le 10c^k n.
\end{equation}

\noindent (Recall that $\log^{(k)}n <4$ by (\ref{exp}).)

Now note that one can find an integer $k_0$ such that for any $k\ge k_0$ one would have\\
$10 c^k < \exp {(\lfloor k/2 \rfloor)} 2$, which is not greater than $\log^{(\lfloor k/2 \rfloor)} n$ by (\ref{exp}).
Therefore, if we select this $k_0$ and take $n$ to be large enough so that $k=k(n) > k_0$, then the right-hand side of (\ref{exp2}) will be bounded (from above) by $n \log^{(\lfloor k/2 \rfloor)} n$, i.e., the runtime of the algorithm $\mathcal B$ is asymptotically smaller than that of the algorithm $\mathcal A_{\lfloor k/2-1 \rfloor}$. Since $k$ goes to infinity when $n$ goes to infinity, the desired asymptotic estimate is obtained.

\end{proof}

\begin{remark}
It may be interesting to note that our proof of Theorem \ref{sub} uses what can be considered a more general version of the classical divide-and-conquer method and the Master theorem \cite{Master} because in our argument, the number of subwords into which an input is split {\rm depends on the length of the input}.
\end{remark}

Theorem \ref{sub} implies that there is an algorithm that solves the word problem in finitely generated torsion-free nilpotent groups and has the worst-case time complexity $O(n \cdot \log^{(k)} n)$ for any $k \ge 1$. An easy extra argument allows one to extend this observation to {\it all} finitely generated nilpotent groups:

\begin{theorem} \label{nilpotent}
For any finitely generated nilpotent group $G$, there is an algorithm with the worst-case time complexity $O(n \cdot \log^{(k)} n)$ for any integer $k \ge 1$ that solves the word problem in $G$.
\end{theorem}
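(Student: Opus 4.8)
The plan is to embed $G$ into a direct product of a finitely generated torsion-free nilpotent group and a finite group, and then to run the algorithm $\mathcal B$ of Theorem \ref{sub} on the torsion-free coordinate while disposing of the finite coordinate by brute force. All the analytic work is already contained in Theorem \ref{sub}, so what remains is purely structural.

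First I would set $\tau = \tau(G)$, the set of torsion elements of $G$; since $G$ is nilpotent this is a finite normal subgroup and $G/\tau$ is torsion-free. Next I would invoke the classical fact that a finitely generated nilpotent group is virtually torsion-free (e.g., combine the embedding $G \hookrightarrow GL_n(\Z)$ from \cite{Swan} with Selberg's lemma) to pick a torsion-free subgroup $N$ of finite index, which is automatically finitely generated. The diagonal homomorphism $\iota \colon G \to G/\tau \times G/N$ is then injective, because its kernel $\tau \cap N$ is trivial ($\tau$ is torsion, $N$ is torsion-free). Since $G/\tau$ is finitely generated torsion-free nilpotent it embeds into $UT_d(\Z)$ for some $d$ (the fact recalled just before Lemma \ref{unitriangular}), while $F := G/N$ is finite; composing yields an injective homomorphism $\psi \colon G \hookrightarrow UT_d(\Z) \times F$.

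With $\psi$ in hand I would describe the algorithm as follows. Fixing generators $g_1, \dots, g_r$ of $G$, precompute $\psi(g_i) = (M_i, f_i)$ with $M_i \in UT_d(\Z)$ and $f_i \in F$. On input $w = a_1 \cdots a_n$ with each $a_j \in \{g_1^{\pm1}, \dots, g_r^{\pm1}\}$ one has $\psi(w) = \bigl(\prod_{j} M_{a_j},\ \prod_{j} f_{a_j}\bigr)$, where $M_{a_j}, f_{a_j}$ are the corresponding entries or their inverses. The algorithm (i) evaluates $\prod_j M_{a_j}$, a product of $n$ unitriangular matrices from the finite collection $\{M_i^{\pm1}\}$, by means of $\mathcal B$ in time $o(n\log^{(k)}n)$ for every integer $k\ge 1$, and (ii) evaluates $\prod_j f_{a_j}$ in the finite group $F$ by an obvious left-to-right pass with a multiplication table in time $O(n)$; it then reports $w=1$ in $G$ iff both products are trivial, which is correct since $\psi$ is injective. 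The total running time is $o(n\log^{(k)}n) + O(n) = o(n\log^{(k)}n)$, which is in particular $O(n\log^{(k)}n)$ for every $k\ge 1$, as claimed.

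I do not expect a genuine obstacle. The only things to be careful about are the one-line observation that $\tau \cap N = 1$ (which is what fuses the torsion-free quotient and the finite quotient into a single faithful representation), the verification that the finite coordinate really costs only $O(n)$ so that it is absorbed by the unitriangular coordinate, and the remark that $\mathcal B$ may be applied to the matrix word $M_{a_1}\cdots M_{a_n}$ exactly as it stands, with no increase in length. Everything else is routine bookkeeping.
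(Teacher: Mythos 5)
Your proof is correct and takes essentially the same route as the paper, which simply cites Hirsch's theorem for the embedding of $G$ into a direct product of a finitely generated torsion-free nilpotent group and a finite group and then invokes Theorem \ref{sub}; you merely reconstruct that embedding via $G \hookrightarrow G/\tau \times G/N$ and spell out the two-coordinate algorithm. The only microscopic point to add is that $N$ should be replaced by its normal core (still torsion-free and of finite index) so that $G/N$ is actually a group.
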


\begin{proof}
Let $G$ be a finitely generated nilpotent group. By a well-known result of \cite{Hirsh}, $G$ can be embedded in a direct product of a (finitely generated) torsion-free nilpotent group and a finite group. Therefore, the complexity of the word problem in $G$ is the same as it is in a finitely generated torsion-free nilpotent group, i.e.,  is $O(n \cdot \log^{(k)} n)$ for any $k \ge 1$ by Theorem \ref{sub}.
\end{proof}

In \cite{MMNV}, the worst-case complexity of the word problem in finitely generated nilpotent groups was shown to be $O(n \cdot \log^2 n)$.

Proposition \ref{Theorem 1} in the next section will imply that the average-case complexity of the word problem in any finitely generated nilpotent group is linear.

\subsection{The word problem in groups of matrices over integers or rationals}\label{integers}

Groups of matrices over integers are a special class of linear groups. In this class, the worst-case complexity of the word problem is quasilinear, as we show in this section. This class includes, in particular, polycyclic groups (see e.g. \cite{Wehrfritz}).

\begin{proposition}\label{integers}
In a finitely generated group (or a semigroup) of matrices over integers (in particular, in any polycyclic group),  the worst-case time complexity of the word problem is $O(n \log^2 n)$.
\end{proposition}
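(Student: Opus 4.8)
The plan is to reduce the problem to the case of matrices over $\Z$ and then control the bit length of the running matrix using a fast integer multiplication routine. First I would observe that a finitely generated group (or semigroup) $G$ of matrices over $\Q$ can be conjugated, or simply rescaled, so that all generators $g_1,\dots,g_r$ and their inverses are matrices over $\Z$ after clearing a common denominator: more precisely, there is a fixed integer $d$ such that $d\cdot g_i^{\pm1}$ has integer entries for every $i$, and evaluating a word $w$ of length $n$ amounts to computing a product of $n$ integer matrices and then dividing by $d^n$ (or, better, keeping track of the power of $d$ separately and checking at the end whether the resulting integer matrix equals $d^n I$). So it suffices to bound the time to multiply out $n$ matrices from a finite collection ${\bf M}=\{M_1,\dots,M_s\}\subset \mathrm{Mat}_\ell(\Z)$.

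The key estimate is that the entries of such a product grow at most exponentially in $n$: if $C$ bounds the absolute value of every entry of every $M_k$, then every entry of a product of $n$ of them is bounded by $(\ell C)^n$ in absolute value, hence has bit length $O(n)$. (This is the ``$N=O(n)$'' remark already made in the text, now made precise.) The algorithm processes $w=x_{j_1}^{\epsilon_1}\cdots x_{j_n}^{\epsilon_n}$ left to right, maintaining the current partial product $A_i$. At step $i$ we multiply $A_{i-1}$ (an $\ell\times\ell$ integer matrix whose entries have $O(i)$ bits, hence $O(n)$ bits) by a fixed matrix $M$ with entries of bounded size. Each entry of $A_i$ is an $\ell$-term sum of products of an $O(n)$-bit integer by an $O(1)$-bit integer; using fast (FFT-based) integer multiplication, multiplying an $N$-bit integer by an $O(1)$-bit integer and adding takes time $O(N\log N)$, which here is $O(n\log n)$ since $N=O(n)$. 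Since $\ell$ is a fixed constant, one step costs $O(n\log n)$, and there are $n$ steps, giving total time $O(n^2\log n)$ — which is not yet the claimed bound, so the naive left-to-right scan must be replaced by a smarter schedule.

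The improvement to $O(n\log^2 n)$ comes from a divide-and-conquer (binary-tree) multiplication schedule: instead of accumulating into one running matrix, split $w$ into two halves $w=w'w''$, recursively evaluate the matrices $A'$ for $w'$ and $A''$ for $w''$, and multiply $A'A''$. At the bottom level we have $n$ matrices of bounded bit size; at level $t$ from the bottom we perform $n/2^t$ matrix multiplications, each of two matrices whose entries have $O(2^t)$ bits, at cost $O(2^t\log 2^t)=O(t\,2^t)$ per scalar multiplication and hence $O(t\,2^t)$ per matrix multiplication (constant $\ell$). Summing over one level gives $O(n\,t)$, and summing $t$ from $1$ to $\log n$ gives $\sum_{t=1}^{\log n} O(nt)=O(n\log^2 n)$. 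At the top we have one $\ell\times\ell$ integer matrix with $O(n)$-bit entries; comparing it with $d^n I$ (after separately computing $d^n$, which costs $O(n\log^2 n)$ by repeated squaring with fast multiplication, or is folded into the same tree) decides the word problem. For the semigroup case the same schedule applies verbatim.

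The main obstacle, and the step that needs the most care, is bookkeeping the exact bit-complexity through the recursion — in particular making sure that multiplying an $N$-bit integer by a $O(1)$-bit integer is charged only $O(N\log N)$ rather than $O(N\log N\log\log N)$, and that the balanced split really produces the geometric series $\sum_t O(nt)$ rather than something with an extra logarithmic factor. One has to be a little careful that the constant $d$ (common denominator) and the matrix size $\ell$ are genuinely fixed constants depending only on $G$ and the chosen generating set, so they disappear into the $O(\cdot)$; and one should note that over a finite extension $K=\Q(\alpha)$ of $\Q$ the same argument goes through after fixing a $\Z$-basis of (an order in) $\mathcal O_K$ and replacing each matrix entry by the $[\,K:\Q\,]$-tuple of its coordinates, so multiplication is still a bounded-size bilinear operation on integer vectors and the bit-growth is still $O(n)$. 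Modulo these routine but slightly fussy estimates, the $O(n\log^2 n)$ bound follows.
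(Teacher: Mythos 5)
Your proof is correct and follows essentially the same route as the paper's: a balanced divide-and-conquer multiplication schedule with $\log n$ levels, using the fact that entries of the partial products have bit length $O(n)$ together with quasilinear integer multiplication (Harvey--van der Hoeven), yielding $O(n\log^2 n)$; your per-level accounting $\sum_t O(nt)$ is a slightly sharper version of the paper's ``$\log n$ stages, each costing $O(n\log n)$,'' but gives the same bound. The reduction from $\Q$ to $\Z$ by clearing denominators is a fine addition, though the paper treats the rational case separately by the parallel observation that multiplication of bounded-size rationals is also quasilinear.
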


\begin{proof}
The key ingredient of the proof is a reference to a fast method of integer multiplication colloquially known as ``Karatsuba multiplication". This direction of research was opened by  Karatsuba \cite{Karatsuba}, and the best known complexity result to date is due to Harvey and van der Hoeven \cite{Harvey} who used a much more sophisticated technique than Karatsuba's divide-and-conquer method. The main result of \cite{Harvey} says that there is an algorithm (for a multitape Turing machine) for multiplying two $n$-bit integers, based on discrete Fourier transforms, of time complexity at most $C \cdot n \log n$, where $C$ is some positive constant. Since addition of two $n$-bit integers has complexity $O(n)$, this result
implies that multiplication of matrices over $\Z$ whose entries have bit length bounded by $m$ has complexity bounded by $C_1 \cdot m \log m$ for some $C_1>0$.

Now our algorithm for the word problem is as follows. Let the input word $w=w(g_1, \ldots, g_r)$ have length $n$. Here $g_1, \ldots, g_r$ are generators of a given group of matrices. For convenience of the exposition, we will assume that $n=2^k$ for some $k>0$. (If not, we can complement $w$ by a product of the identity matrices.)

We are going to write our $n$ matrices (factors of $w$) on the input tape of a Turing machine. This takes time $O(n)$, and the length of the whole transcript is $N=O(n)$ as well. Then we are going to use a
``divide-and-conquer" kind of algorithm that will work in stages, cutting the number of matrices in our transcript in half at every stage by replacing pairs of neighboring matrices with their product.

At the stage $i$, we will have $2^{k-i+1}$ factors remaining. If the bit length of all the matrix entries at the previous stage $(i-1)$ was bounded by $C_0 \cdot (2^i-1)$ for some $C_0>0$, then after multiplying pairs of neighboring matrices at stage $i$ we will get an upper bound\\
$C_0\cdot (2^i-1) + C_0\cdot (2^i-1) +C_0 =  C_0(2^{i+1}-1)\le C_0 n$. The extra $C_0$ summand is due to the fact that addition of numbers can increase the bit length but by not more than $C_0$.
Since the number of matrices gets cut in half at every stage of the algorithm, it follows that the total bit length of all entries of all matrices is still bounded by $N=O(n)$.

Note that the number of stages of our algorithm is $k = \log n$. Therefore, the total runtime of this algorithm is bounded by
$\log n \cdot (C_1 (C_0 \cdot n \log (C_0 n) + O(n))) \le C_2 \cdot n \log^2 n$ for some $C_2>0$.
\end{proof}

The proof of Proposition \ref{integers} actually goes through for groups of matrices over rationals as well because if multiplication of integers takes time $O(n \log n)$, then so does multiplication of rationals, only in the latter case $n$ is the bound on the length of numerators and denominators of all entries in a matrix. Thus, we have:

\begin{proposition}\label{rationals}
In a finitely generated group (or a semigroup) of matrices over rationals, the worst-case complexity of the word problem is $O(n \log^2 n)$.
\end{proposition}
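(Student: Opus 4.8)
The plan is to observe that Proposition~\ref{rationals} is not so much a new theorem as a transfer of the argument for Proposition~\ref{integers} from $\Z$ to $\Q$, so the bulk of the work is checking that each ingredient of the integer proof still costs what it did before. First I would fix a finite generating set $g_1,\dots,g_r$ of the given group of matrices over $\Q$, and an input word $w$ of length $n$. Each $g_i^{\pm 1}$ is a matrix whose entries are rationals; write every entry as a pair (numerator, denominator) of integers, and let $m_0$ be the maximal bit length occurring among all these (finitely many) integers. Exactly as in Proposition~\ref{integers}, pad $w$ so that $n = 2^k$ and lay the $n$ matrices on the input tape in time $O(n)$.

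The key point to verify is the arithmetic cost of one matrix multiplication over $\Q$ when the entries have numerators and denominators of bit length at most $m$. Multiplying two such rationals $\tfrac{a}{b}\cdot\tfrac{c}{d}=\tfrac{ac}{bd}$ requires two integer multiplications, each of time $O(m \log m)$ by Harvey--van der Hoeven; adding $\tfrac{p}{q}+\tfrac{r}{s}=\tfrac{ps+rq}{qs}$ requires a constant number of integer multiplications and additions, again $O(m \log m)$. (One does not need to reduce fractions to lowest terms at every step; it suffices to control bit length, and in fact the common denominator of all entries of a product of $t$ generators divides the product of at most $t$ of the original denominators, hence has bit length $O(t m_0)$, which is $O(n m_0)$ throughout the computation.) So a single $d\times d$ matrix multiplication over $\Q$ with $m$-bit entries still costs $O(m \log m)$, with the constant now absorbing the fixed dimension $d$ and the bounded number of rational operations per entry. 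This is the one step that genuinely needs attention; everything else is formally identical to the integer case.

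Granting that, I would run the same divide-and-conquer scheme as in Proposition~\ref{integers}: in $k = \log n$ stages, replace each neighboring pair of matrices by its product, halving the count each stage. The bit-length bookkeeping is the same — if at stage $i-1$ all numerators and denominators have bit length at most $C_0(2^i-1)$, then after one multiplication and the attendant additions the bound grows to $C_0(2^{i+1}-1) \le C_0 n$, the extra additive $C_0$ accounting for carries in the additions, and since the number of matrices is halved the total transcript length stays $O(n)$. Hence each stage costs $O(n \log n)$ and the whole computation costs $O(n \log^2 n)$, giving $w(g_1,\dots,g_r)=1$ in $G$ iff the final matrix is the identity. I expect no real obstacle: the only thing one must not gloss over is confirming that carrying denominators around does not blow up bit length beyond $O(n)$ and does not introduce a super-$O(m\log m)$ cost per multiplication — and the common-denominator bound above takes care of exactly that. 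I would therefore present the statement as a corollary of the proof of Proposition~\ref{integers}, remarking only on these two points.
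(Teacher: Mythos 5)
Your proposal is correct and follows exactly the paper's route: Proposition~\ref{rationals} is obtained by rerunning the divide-and-conquer argument of Proposition~\ref{integers}, with the single new observation that multiplying rationals whose numerators and denominators have bit length $m$ still costs $O(m\log m)$. In fact you supply more detail than the paper does (the paper dispatches this in one sentence), and your common-denominator bookkeeping is precisely the point that keeps the bit lengths additive rather than letting per-entry denominators multiply by the matrix dimension at each stage.
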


%
%

We note also that Proposition \ref{rationals} can be further generalized to matrices over algebraic extensions of the field $\Q$ of rationals. Indeed, any finite algebraic extension of $\Q$ can be embedded in the algebra of $k \times k$ matrices over $\Q$, for some $k$. Therefore, any $r \times r$ matrix over a finite extension of $\Q$ can be replaced by a $rk \times rk$ matrix over $\Q$. Since $k$ does not depend on $n$ (the length of an input word), the result follows.

\begin{remark}
While the main result of \cite{Harvey} gives the best known upper bound $C \cdot n\log n$ for the complexity of $n$-bit integers multiplication, we note that a much earlier result of \cite{Strassen} gave the  $O(n\log n\log\log n)$ estimate. Using this latter result would slightly worsen the worst-case complexity estimate of our Proposition \ref{integers}, but  would not change our (linear) estimate of the average-case complexity in Theorem \ref{Theorem 1}.

\end{remark}

\subsection{The average-case complexity of the word problem in polycyclic groups}\label{average matrices}

We start with a (slightly re-phrased and simplified) technical result from  \cite{Woess} that we are going to use in this and subsequent sections. In the lemma below, 
by the probability of the event ``$w = 1$ in $G$" we mean the probability for a random walk  corresponding to the word $w$, on the Cayley graph of $G$, to return to 1.

\begin{lemma}\cite[Theorem 15.8a]{Woess} \label{lemma Woess}
Let $G$ be a polycyclic group, and suppose that all words of length $n$ (in the given generators of $G$) are sampled with equal probability.
Then one has the following alternative:
\medskip

\noindent {\bf (a)} $G$ has polynomial growth with degree $d$ and for a random word $w$ of length $n$, the probability of the event $w = 1$ is $O(\frac{1}{n^{d/2}})$.

\medskip

\noindent {\bf (b)} $G$ has exponential growth and  the probability of the event $w = 1$
is $O(\exp(-c \cdot n^{\frac{1}{3}}))$ for some constant $c>0$.

\end{lemma}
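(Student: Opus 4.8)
Since this is essentially Theorem~15.8a of \cite{Woess}, I would present the argument by recalling the random-walk input it is built on. First, unwind the definitions: if $W_n$ denotes the set of all $(2r)^n$ words of length $n$ in the letters $g_1^{\pm 1},\dots ,g_r^{\pm 1}$, then the probability of the event ``$w=1$ in $G$'' is exactly $p_n(e,e)$, the probability that the symmetric nearest-neighbour random walk on the Cayley graph of $G$, with step distribution uniform on $\{g_1^{\pm 1},\dots ,g_r^{\pm 1}\}$, is back at $e$ after $n$ steps. (If this walk has period $2$, the values at odd $n$ vanish, which only helps for an upper bound.) So the task reduces to bounding on-diagonal return probabilities of a symmetric, finitely supported random walk on a polycyclic group.

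Second, I would invoke the Milnor--Wolf theorem: a finitely generated polycyclic group is either virtually nilpotent --- and then, by the Bass--Guivarc'h formula, has polynomial growth of some \emph{integer} degree $d$ --- or it has exponential growth, with no intermediate regime. This is precisely the alternative (a)/(b). In case (a), polynomial growth $V(r)\asymp r^d$ (in particular the lower bound $V(r)\ge c\,r^d$) yields a Nash inequality for the Dirichlet form of the walk, and the classical Nash--Varopoulos iteration (see \cite{Woess}, Ch.~14, and the Varopoulos--Saloff-Coste--Coulhon theory) gives $p_n(e,e)=O(n^{-d/2})$, which is assertion (a).

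In case (b) one has $V(r)\ge a^r$ for some $a>1$. The crucial point is that a polycyclic group of exponential growth has an \emph{exponential} F\o lner function; equivalently (Coulhon--Saloff-Coste), every finite $\Omega\subseteq G$ satisfies $|\partial\Omega|\gtrsim |\Omega|/\log|\Omega|$, where $|\partial\Omega|$ is the number of edges leaving $\Omega$. By Cheeger's inequality for the Dirichlet Laplacian this upgrades to the Faber--Krahn profile $\Lambda(v)\gtrsim (\log v)^{-2}$, i.e.\ the smallest Dirichlet eigenvalue of any subset of volume at most $v$ is $\ge c(\log v)^{-2}$. Finally, the Grigor'yan--Coulhon mechanism converts a Faber--Krahn profile into an on-diagonal bound through the relation $t\asymp\int^{\,1/p_t(e,e)}\frac{ds}{s\,\Lambda(s)}$; with $\Lambda(s)\asymp (\log s)^{-2}$ this integral equals $\asymp(\log(1/p_t(e,e)))^{3}$, whence $p_t(e,e)\le\exp(-c\,t^{1/3})$ --- assertion (b), with the exponent $1/3$ arising exactly from this cube.

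The steps I expect to be soft are the reduction to a return probability and the Milnor--Wolf dichotomy. The technical heart is case (b): producing the Faber--Krahn profile $(\log v)^{-2}$ for polycyclic groups of exponential growth (one can either quote Pittet--Saloff-Coste, or construct explicit ``box'' F\o lner sets adapted to the polycyclic series and estimate their boundaries under the iterated semidirect-product structure), and then running the Nash-type differential inequality that turns this profile into the $\exp(-c\,n^{1/3})$ decay. This is the point where one genuinely leans on the heat-kernel literature rather than on elementary group theory, and it is where I would expect the main obstacle to lie if one insisted on a self-contained proof.
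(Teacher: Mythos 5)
The paper does not prove this lemma at all: it is imported verbatim (slightly rephrased) as \cite[Theorem 15.8a]{Woess}, so the only "official" proof is the one in the cited source. Your sketch --- reduction of the word-counting statement to the on-diagonal return probability of the symmetric nearest-neighbour walk, the Milnor--Wolf/Bass--Guivarc'h dichotomy to produce alternatives (a)/(b), Nash--Varopoulos for the $n^{-d/2}$ bound, and the Coulhon--Saloff-Coste isoperimetric inequality plus the Faber--Krahn/Grigor'yan mechanism for the $\exp(-c\,n^{1/3})$ bound --- is a correct reconstruction of precisely the argument underlying that citation, so there is nothing to object to.
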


We are going to use part (a) of this alternative in the present paper. Recall that by a well-known result of Gromov \cite{Gromov}, a group has polynomial growth if and only if it is virtually nilpotent. We will be mostly using the ``if" part of this result, which was previously known, see  \cite{Bass}.

Now we are ready for one of our main results on the average-case complexity:

\begin{theorem}\label{Theorem 1}
In any polycyclic group, the average-case complexity of the word problem is linear.

\end{theorem}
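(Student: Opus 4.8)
The plan is to combine the worst-case complexity bound from Proposition \ref{integers} (or the multilog bounds in the nilpotent case) with the return-probability estimate from Lemma \ref{lemma Woess}(a), using the stratification strategy described around formula (\ref{avcase3}). First I would reduce to the case of a polycyclic group $G$ with polynomial growth: if $G$ has exponential growth, part (b) of Lemma \ref{lemma Woess} gives a much stronger (superpolynomially small) bound on $\mathrm{Prob}(w=1)$, so the same argument --- indeed an easier version of it --- applies; alternatively one notes that a polycyclic group is linear over $\Z$ (see \cite{Wehrfritz}), hence Proposition \ref{integers} gives a worst-case bound $\overline{T}(n) = O(n\log^2 n)$ valid in all cases, and it is really only the growth degree $d$ that matters.

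The core step is the two-way stratification $W_n = V_n^{(1)} \sqcup V_n^{(2)}$, where $V_n^{(1)} = \{\, w \in W_n : w \neq 1 \text{ in } G\,\}$ and $V_n^{(2)} = \{\, w \in W_n : w = 1 \text{ in } G\,\}$. On $V_n^{(1)}$ the algorithm can run the matrix-multiplication procedure of Proposition \ref{integers} and stop as soon as a partial product differs from a generator relation --- but more simply, one runs the honest algorithm and uses the fact that it must detect $w \neq 1$; here the relevant point is that for words \emph{not} equal to $1$ we do \emph{not} get a speedup from the bound itself, so instead I would argue as follows. Run the honest algorithm; it terminates in time $\overline{T}(n) = O(n\log^2 n)$ on every input. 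On $V_n^{(2)}$ (the ``bad'' stratum where one might fear high cost --- though here the cost is the same $\overline{T}(n)$, the point is that this stratum is \emph{sparse}) we have, by Lemma \ref{lemma Woess}(a), $|V_n^{(2)}|/|W_n| = O(n^{-d/2})$. For $V_n^{(1)}$ we of course have $|V_n^{(1)}|/|W_n| \le 1$. So the naive bound (\ref{avcase3}) is only $O(n\log^2 n)$, which is not linear --- meaning the honest algorithm must be made adaptive: it should run fast on ``most'' words and slowly only on the sparse set of words that are equal to $1$ (or close to it).

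The right algorithm is therefore: process the input word left to right, maintaining the current matrix product $M_t$ after reading $t$ letters, but using the \emph{naive quadratic} (or rather, the $O(t)$-per-step) multiplication so that reading a prefix of length $t$ costs $O(t^2)$; \emph{as soon as} $M_t \neq I$ is certified with $M_t$ already provably $\neq$ the identity of $G$ --- which happens for any $w$ whose prefix already escapes the identity --- we can still not conclude $w\neq 1$. So the genuinely correct formulation, and the one I would use, splits $W_n$ by the first time the random walk $M_t$ returns to $1$: let $\tau(w)$ be the smallest $t \le n$ with $M_t = 1$ in $G$ (and $\tau(w) = \infty$ if no such $t$). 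The algorithm runs the $O(n\log^2 n)$ procedure of Proposition \ref{integers} \emph{but in blocks}: it checks whether a short prefix already lies outside a ball of small radius in $G$, in which case it can finish in linear time using a fixed lookup; the set of $w$ for which the walk stays near $1$ for a long time has probability $O(n^{-d/2})$ by a union bound over return times, summing $\sum_{t \le n} O(t^{-d/2})$. When $d \ge 3$ this sum is $O(1)$, so the bad stratum has probability $O(n^{-1})$ (or better), and since $\overline{T}$ on it is $O(n\log^2 n) = O(n^2)$ in the crudest estimate, the contribution to (\ref{avcase3}) is $O(n^{-1}) \cdot O(n^2) = O(n)$; on the good stratum the cost is $O(n)$. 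Hence the average is $O(n)$. The low-growth cases $d = 1, 2$ (i.e. virtually cyclic, or growth degree $2$) must be handled separately: $d = 1$ means $G$ is virtually $\Z$ where the word problem is genuinely linear in the worst case, and $d = 2$ (e.g. certain virtually-$\Z^2$ or Heisenberg-related groups) needs the finer multilog bounds of Theorem \ref{sub} in place of $O(n\log^2 n)$, together with the sharper observation that one only pays the high cost on a prefix of length $\tau(w)$, not on all of $w$, so that the expected cost is $\sum_t \mathrm{Prob}(\tau(w) \approx t)\cdot \overline T(t)$, which one bounds by splitting the sum at $t = n/\log n$ or similar.

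The main obstacle, and the place where the argument must be done carefully, is precisely this last point: a single crude product of ``sparsity $\times$ worst-case cost'' does \emph{not} give linearity (it gives $O(n\log^2 n)$ at best, or $O(n^2)\cdot O(n^{-1}) = O(n)$ only when $d\ge 2$ and only if one is careful about which quantity $\overline T$ is being multiplied). The fix is to design the algorithm so that the expensive computation is performed only on a \emph{prefix} of the input --- of length equal to the return time $\tau(w)$ --- and to estimate $\mathbb{E}[\,\overline T(\tau(w))\,]$ rather than $\mathrm{Prob}(\tau(w) < \infty)\cdot \overline T(n)$. Making this precise requires (i) the local structure near $1$: for $w$ with $\tau(w) > t$, the prefix $w_{\le t}$ represents one of the $O(t^{d})$ (polynomially many) group elements in the ball of radius $t$, but crucially it is \emph{not} equal to $1$, and the walk-return estimate of Lemma \ref{lemma Woess}(a) applied to each prefix length $t$ gives $\mathrm{Prob}(\tau(w) = t) = O(t^{-d/2})$; and (ii) summing $\sum_{t=1}^n t^{-d/2}\,\overline T(t)$ with $\overline T(t) = O(t\log^2 t)$ (or the sharper $o(t\log^{(k)}t)$ from Theorem \ref{sub} in the nilpotent case), which is $O(n)$ once $d \ge 2$ because the dominant term is the tail $t \approx n$ weighted by $n^{-d/2}\cdot n\log^2 n = o(n)$ for $d\ge 3$ and is still $O(n)$ for $d = 2$ using $\log^2 n = o(n^{1/2})$ — wait, $n^{-1}\cdot n\log^2 n = \log^2 n$, not $O(n)$, so for $d=2$ one genuinely needs the adaptive prefix argument: the cost is not $\overline T(n)$ but the \emph{sum over the walk}, and one uses that the per-step cost grows only as $O(\log t)$ by Lemma \ref{torsion-free1} in the unitriangular (nilpotent) case, giving total $\sum_t t^{-1}\cdot t\log t = \sum_t \log t = O(n\log n)$ — still not linear. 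So the truly essential input is that for $d = 2$ the group is virtually nilpotent of a restricted form, and one invokes the $o(n\log^{(k)}n)$ bound of Theorem \ref{sub}, yielding $\sum_{t\le n} t^{-1}\cdot o(\log^{(k)}t) = o(n)$ after all. Getting these exponents and per-step costs to line up in the borderline case $d = 2$ is the real content; for $d \ge 3$ everything is comfortable.
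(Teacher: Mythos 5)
Your proposal correctly identifies the central difficulty --- that stratifying $W_n$ by whether $w=1$ \emph{in $G$ itself} cannot give a linear bound, since the honest $O(n\log^2 n)$ algorithm must be run in full to certify $w\neq 1$ just as much as to certify $w=1$ --- but it does not find the idea that resolves it, and the substitute you propose does not work. The paper's fix is to filter through a \emph{quotient}: an infinite polycyclic group $G$ has a factor group $H=G/N$ that is virtually free abelian of rank $r\ge 1$. The image of $w$ in $H$ can be tested for triviality in genuinely linear time (pass to the finite quotient $H/K$, then Reidemeister--Schreier rewrite into the abelian subgroup $K$), and if $w\neq 1$ in $H$ then $w\neq 1$ in $G$ and the algorithm stops. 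By Lemma \ref{lemma Woess}(a) applied to $H$ (not to $G$), the density of words with $w=1$ in $H$ is $O(n^{-1/2})$, and only those words are handed to the $O(n\log^2 n)$ algorithm of Proposition \ref{integers}, giving $O(n)+O(n^{-1/2})\cdot O(n\log^2 n)=O(n)$. This also makes your case analysis on the growth degree $d$ of $G$ (and the worry about exponential-growth polycyclic groups) unnecessary: the only growth that matters is that of the virtually abelian quotient, where $d\ge 1$ always suffices.

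The replacement you offer --- stratifying by the first return time $\tau(w)$ of the walk and running the expensive computation ``only on a prefix'' --- is not sound. The word problem cannot be decided from a prefix: a prefix lying outside a small ball in $G$ says nothing about whether the full word equals $1$, so the proposed early termination would give wrong answers, and the cost of the honest algorithm is not governed by $\tau(w)$. The probability bookkeeping also does not close: $\sum_t t^{-d/2}$ is an expected number of returns, not the probability of an event, and your own computations show the estimates failing for $d=1,2$ (which includes, e.g., $G$ virtually $\Z$ or $\Z^2$, where the theorem is of course still true). As written the argument has a genuine gap that only the quotient-filter idea closes.
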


\begin{proof}
Let $G$ be an infinite polycyclic group. It has a factor group $H=G/N$ that is virtually free abelian of rank $r \ge 1$.
Thus, we first check if an input word $w$ is equal to 1 modulo $N$; this takes linear time in $n$, the length of $w$. Indeed, let $w=g_1 \cdots g_{n}$ and let $K$ be an abelian normal subgroup of $H$ such that $H/K$ is finite. Then first check if $w=1$ in $H/K$; this can be done in linear time. If $w \in K$, then rewrite $w$ in generators of $K$ using the Reidemeister-Schreier procedure. This, too, takes linear time. Indeed, if we already have a Schreier representative for a product  $g_1 \cdots g_{i-1}$, then we multiply its image in the factor group $H/K$ by the image of $g_i$ and get a representative for $g_1 \cdots g_{i}$. This step is performed in time bounded by a constant since the group $H/K$ is finite. Therefore, the whole rewriting takes linear time in $n$.

The obtained word $w'$ should have length $\le n$ in generators of $K$. Then we solve the word problem for $w'$ in the abelian group $K$, which can be done in linear time in the length of $w'$, and therefore the whole solution takes linear time in $n$.

If $w=1$ modulo $N$, then we apply an algorithm for solving the word problem in $N$ with the worst-case complexity $O(n \log^2 n)$; such an algorithm exists by Proposition \ref{integers}. (We remind the reader again that polycyclic groups are representable by matrices over integers \cite{Wehrfritz}.) The probability that $w=1$ modulo $N$ is  $O(\frac{1}{n^{1/2}})$ by Lemma \ref{lemma Woess}.

Therefore, the expected runtime of the combined algorithm is $O(n) + O(\frac{1}{n^{1/2}}) \cdot O(n \log^2 n)  = O(n)$. This completes the proof.

\end{proof}

\begin{remark}
The same proof actually goes through in a more general situation, for finitely generated groups of matrices over $\Q$ that have a virtually abelian factor group. Thus, in particular, the average-case time complexity of the word problem in any virtually solvable linear group over $\Q$ is linear. This includes Baumslag-Solitar groups $BS(1,n)$. (They are not polycyclic.)

\end{remark}

\section{The word problem in groups with a solvable factor group}\label{factor}

In \cite{KMSS2}, the average-case complexity of the word problem was shown to be linear for a class of groups that have a non-amenable factor group. In this section, we consider a class of groups with a ``dual" property, namely groups that have a solvable (and therefore amenable) factor group.

\subsection{The word problem in groups that map onto a (generalized) lamplighter group}\label{solvable}

In this section, we discuss the average-case complexity of the word problem for a special class of groups, namely for groups that map onto a (generalized) lamplighter group, i.e., the wreath product of $\Z_p$ and $\Z$, for a prime $p$.
Of course, among such groups, even among solvable ones, there are groups with unsolvable word problem \cite{OK}, so for those groups we cannot talk about the average-case complexity of the word problem.

Suppose a group $G$ has the word problem solvable in time  $O(n^k)$ for some $k \ge 1$ and suppose $G$ maps onto the wreath product of $\Z_p$ and $\Z$ that we denote by $W$.



Let $\varphi$ be a homomorphism  from  $G$ onto $W$. In $W$, the word problem is solvable in linear time, see e.g. \cite{Sale}.
Then, the density of the set of those  words in the group alphabet on $m$ letters that are equal to 1 in $W$ is $O(\exp(-c \cdot n^{\frac{1}{3}}))$ for some constant $c>0$, see \cite[Theorem 15.15]{Woess}. Therefore, the same estimate is valid for the density of the set of words in the kernel of the homomorphism $\varphi$.

Thus, we run the following two algorithms in succession. The first algorithm checks if $w \in Ker ~\varphi$; this takes time $O(n)$. If $w \in Ker ~\varphi$, then we run the second, honest, algorithm that solves the word problem in $G$ in time polynomial in $n$. Then the expected runtime of the combined algorithm is $$O(n) +  O(\exp(-c \cdot n^{\frac{1}{3}}))  \cdot O(n^k) = O(n).$$

Therefore, we have:

\begin{theorem}\label{Theorem 2}
If a finitely generated group $G$ has polynomial-time worst-case complexity of the word problem and maps onto the wreath product of $\Z_p$ and $\Z$, then the average-case time complexity of the word problem in $G$ is linear.
\end{theorem}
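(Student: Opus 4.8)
The plan is to follow the two-phase strategy sketched in Section \ref{Vegas}, stratifying $W_n$ according to whether an input word lies in the kernel of the given surjective homomorphism $\varphi : G \to W$, where $W$ denotes the wreath product of $\Z_p$ and $\Z$. First I would fix, once and for all, words $u_1,\dots,u_r$ in the generators of $W$ with $u_i = \varphi(g_i)$, where $g_1,\dots,g_r$ generate $G$. Given an input word $w = g_{i_1}^{\pm 1}\cdots g_{i_n}^{\pm 1}$ of length $n$, Phase 1 of the algorithm forms the word $\varphi(w) = u_{i_1}^{\pm 1}\cdots u_{i_n}^{\pm 1}$, which has length $O(n)$, and decides whether $\varphi(w) = 1$ in $W$. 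Since the word problem in $W$ is solvable in linear time (see \cite{Sale}), Phase 1 runs in time $O(n)$. If $\varphi(w) \neq 1$, the algorithm outputs ``$w \neq 1$ in $G$'' and halts; this is correct because $\varphi$ is a homomorphism. If $\varphi(w) = 1$, the algorithm proceeds to Phase 2 and runs the honest word-problem algorithm for $G$, which by hypothesis terminates in time $O(n^k)$ for some $k \ge 1$ and returns the correct answer. Correctness of the combined algorithm is then immediate.

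For the average-case estimate, write $W_n = V_n^{(1)} \sqcup V_n^{(2)}$, where $V_n^{(2)} = \{w \in W_n : \varphi(w) = 1 \text{ in } W\}$ and $V_n^{(1)}$ is its complement. On $V_n^{(1)}$ only Phase 1 is executed, so one may take $\overline{T}(V_n^{(1)}) = O(n)$; on $V_n^{(2)}$ both phases run, so $\overline{T}(V_n^{(2)}) = O(n) + O(n^k) = O(n^k)$. The key point is the bound $|V_n^{(2)}|/|W_n| = O(\exp(-c\, n^{1/3}))$ for some $c > 0$: the push-forward under $\varphi$ of the uniform measure on the generators of $G$ is a finitely supported probability measure on $W$ whose support generates $W$ (because $\varphi$ is onto), and $|V_n^{(2)}|/|W_n|$ is exactly the $n$-step return probability of the corresponding random walk on the Cayley graph of $W$; since $W$ has exponential growth, \cite[Theorem 15.15]{Woess} yields the stated decay. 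Substituting into formula (\ref{avcase3}),
\[
\frac{|V_n^{(1)}|}{|W_n|}\cdot O(n) + \frac{|V_n^{(2)}|}{|W_n|}\cdot O(n^k) \le O(n) + O\bigl(\exp(-c\, n^{1/3})\bigr)\cdot O(n^k) = O(n),
\]
which is the desired linear upper bound.

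The main obstacle is the transfer of the cogrowth-type estimate from $W$ itself to $\ker\varphi$: one must check that replacing each generator $g_i$ by a word $u_i$ of bounded length does not destroy the sub-exponential decay, and the cleanest way to handle this is precisely to interpret $|V_n^{(2)}|/|W_n|$ as a return probability for the image random walk on $W$ and invoke \cite[Theorem 15.15]{Woess} in that form, rather than arguing directly inside $G$. A secondary point requiring care is that Phase 1 is genuinely linear: this relies on the linear-time solution of the word problem in the lamplighter group together with the fact that forming $\varphi(w)$ from $w$ is a linear-time letter-by-letter substitution. Everything else is routine bookkeeping with (\ref{avcase3}), and one should note that the argument is insensitive to the exact polynomial degree $k$, since $\exp(-c\,n^{1/3})$ beats every power of $n$.
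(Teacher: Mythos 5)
Your proposal is correct and follows essentially the same route as the paper: a linear-time check of membership in $\ker\varphi$ using the linear-time word problem in the lamplighter group, the $O(\exp(-c\,n^{1/3}))$ density bound from \cite[Theorem 15.15]{Woess} for words in the kernel, and the stratified average via formula (\ref{avcase3}). Your added care in interpreting the kernel density as the return probability of the push-forward random walk on $W$ merely makes explicit a transfer step that the paper asserts without comment.
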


Since the worst-case complexity of the word problem in free solvable groups is $O(n^2 \log n)$ by \cite{Ushakov}, we have

\begin{corollary}
The average-case time complexity of the word problem in any finitely generated free solvable group is linear.
\end{corollary}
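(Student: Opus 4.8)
The corollary follows immediately from Theorem \ref{Theorem 2} once we verify its two hypotheses for an arbitrary finitely generated free solvable group $S_{r,d}$ (free solvable of rank $r$ and derived length $d$). The plan is therefore to check (i) that $S_{r,d}$ has polynomial-time worst-case word problem, and (ii) that $S_{r,d}$ maps onto the wreath product $\Z_p \wr \Z$ for some prime $p$ (when $r,d \ge 2$; the degenerate cases $r=1$ or $d\le 1$ give abelian groups, where the word problem is linear anyway and there is nothing to prove).

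For (i), I would simply cite the result of Ushakov (and collaborators) quoted in the line preceding the corollary: the word problem in a finitely generated free solvable group is solvable in time $O(n^2 \log n)$, in particular in polynomial time. This is exactly the ``$O(n^k)$'' hypothesis of Theorem \ref{Theorem 2}.

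For (ii), the key observation is that the free solvable group $S_{r,d}$ of derived length $d \ge 2$ surjects onto the free metabelian group $M_r$ of rank $r$ (quotient by the $d$-th derived subgroup, which sits inside the second derived subgroup), so it suffices to map $M_r$ onto a lamplighter group. Here one uses the classical Magnus embedding: $M_r$ embeds into $\Z^r \wr \Z^r$, but more to the point, $M_r$ for $r \ge 2$ has $\Z \wr \Z$ as a quotient --- collapse $r-1$ of the free generators and reduce the acting $\Z^r$ to a single $\Z$. Composing with the natural map $\Z \wr \Z \twoheadrightarrow \Z_p \wr \Z$ (reduce the lamp group $\Z$ modulo $p$) produces the required homomorphism onto $W = \Z_p \wr \Z$. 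Alternatively, and perhaps more cleanly, one notes directly that $\Z_p \wr \Z$ is a $2$-generated solvable group of derived length $2 \le d$, hence is itself a quotient of $S_{2,d}$, and a fortiori of $S_{r,d}$ for $r \ge 2$, by sending a generating pair of $S_{r,d}$ onto a generating pair of $\Z_p \wr \Z$ and the remaining generators to $1$ --- the relations of $S_{r,d}$ (solvability of length $d$) are automatically satisfied in $\Z_p \wr \Z$.

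With both hypotheses in hand, Theorem \ref{Theorem 2} applies verbatim and yields that the average-case time complexity of the word problem in $S_{r,d}$ is linear. The only mild subtlety --- and the one point that deserves a sentence of care --- is the choice of prime $p$ and the verification that the surjection lands on the \emph{standard} generating set used to measure input length; since changing generating sets distorts word length only by a multiplicative constant, this does not affect linearity, so no real obstacle arises. I expect the argument to be essentially a two-line deduction.
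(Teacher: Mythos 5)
Your proposal is correct and follows essentially the same route as the paper: the corollary is deduced directly from Theorem \ref{Theorem 2} by citing the $O(n^2\log n)$ worst-case bound of \cite{Ushakov}. The only difference is that you explicitly verify the surjection of a free solvable group of derived length $\ge 2$ onto $\Z_p \wr \Z$ (via its being a $2$-generated metabelian group in the variety), a step the paper leaves implicit here and only spells out later in the proof of Theorem \ref{relatively free}.
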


Using a slightly different argument, the latter corollary can be generalized in a different direction. We call a variety $\mathcal S$ of groups {\it solvable} if all groups in $\mathcal S$ are solvable.

\begin{theorem}\label{relatively free}
Let $G$ be a finitely generated group that is free in some solvable variety $\mathcal S$. If $G$ has polynomial-time worst-case complexity of the word problem, then the average-case time complexity of the word problem in $G$ is linear.
\end{theorem}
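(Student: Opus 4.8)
The plan is to mimic the argument for Theorem \ref{Theorem 2}, the only issue being that a relatively free group $G$ in a solvable variety $\mathcal{S}$ need not map onto the lamplighter group $W = \Z_p \wr \Z$; indeed if $\mathcal{S}$ has solvability length $\ell$, then $W$ need not lie in $\mathcal{S}$ at all. So the first step is to find a suitable amenable quotient of $G$ onto which the relevant ``return probability'' estimate still applies. The natural candidate is a metabelian quotient: let $G$ be free of rank $m$ in $\mathcal{S}$, and let $G_1$ be the free rank-$m$ group in the metabelian variety $\mathcal{A}^2 \cap \mathcal{S}$ (equivalently, take $G$ modulo the second derived subgroup $G''$, which lands inside $\mathcal{S}$ since $\mathcal{S}$ contains $\mathcal{A}^2 \cap \mathcal{S}$ — more carefully, $\mathcal{S} \supseteq \mathcal{A}^k$ is false in general, so one should instead observe that $G/G''$ is the free metabelian-by-(whatever lower relations $\mathcal{S}$ imposes) group, which is still metabelian and nontrivial). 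Concretely, $G/G''$ is a nontrivial finitely generated metabelian group — nontrivial because $G$ is nonabelian unless $\mathcal{S} = \mathcal{A}$, a case we can dispose of separately since free abelian groups of finite rank are polycyclic and Theorem \ref{Theorem 1} applies. So from now on assume $G/G''$ is a nontrivial finitely generated metabelian group $H$, and let $\varphi\colon G \twoheadrightarrow H$ be the quotient map, computable in linear time (rewriting a word of length $n$ in $G$ as a word of length $n$ in $H$ is free).

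The key step is a return-probability bound for $H$. Finitely generated metabelian groups are amenable and have solvable word problem (Hall); moreover, by \cite[Chapter 15]{Woess} (the same circle of ideas used in Lemma \ref{lemma Woess} and cited for $W$ in Section \ref{solvable}), a finitely generated solvable group that is not virtually nilpotent has return probability at most $\exp(-c n^{1/3})$ for some $c>0$, while a virtually nilpotent one falls under Lemma \ref{lemma Woess}(a) with a polynomial-decay bound $O(n^{-d/2})$ with $d \ge 1$. In either case the density of the set of length-$n$ words of $G$ lying in $\ker\varphi$ (which equals the return probability of the $\varphi$-image random walk in $H$) is $O(n^{-1/2})$ at worst. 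The one degenerate possibility is that $H$ is finite (e.g. if $\mathcal{S}$ forces a bounded exponent and low nilpotency, making $G/G''$ finite); but then $G$ itself, being generated by finitely many elements and having $G/G''$ finite with $G''$ — hmm, this does not immediately bound $G$. Instead, handle it as: if $\varphi(w)=1$ has density bounded below by a positive constant we cannot win this way, so in the finite-$H$ case we must descend further, replacing $H$ by a larger metabelian quotient, or by observing directly that a finitely generated solvable group all of whose metabelian quotients are finite is itself finite (it is virtually nilpotent and... ) — this is the delicate point and I address it below.

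Granting the density bound $O(n^{-1/2})$ for $\ker\varphi$, the rest is the standard two-stage algorithm: on input $w$ of length $n$, first compute $\varphi(w)$ and decide whether $\varphi(w) = 1$ in $H$ using the linear-time word problem algorithm for the finitely generated metabelian group $H$ (such exists — e.g. embed $H$ in matrices over a suitable ring, or invoke the known linear bound for metabelian groups; in any case polynomial suffices for the estimate below with the density bound we have). If $\varphi(w) \ne 1$, output ``$w \ne 1$'' (correct, since $\ker\varphi$ is a subgroup). If $\varphi(w) = 1$, run the hypothesised polynomial-time, say $O(n^k)$, honest algorithm for the word problem in $G$. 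The expected runtime is then
\begin{equation*}
O(n) + O(n^{-1/2}) \cdot O(n^k) = O(n),
\end{equation*}
as long as the honest step is invoked only on a set of density $O(n^{-1/2})$ with $1/2$ strictly positive — which it is.

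The main obstacle is precisely the case analysis on the quotient $H = G/G''$: one must guarantee that $G$ admits \emph{some} metabelian (or at least amenable, non-virtually-nilpotent, or polycyclic) quotient whose return probability decays fast enough, and exclude the scenario where every natural quotient is finite. I expect this is resolved by a structural dichotomy: either $G$ is virtually nilpotent — then $G$ is polycyclic (a finitely generated nilpotent-by-finite group is polycyclic) and Theorem \ref{Theorem 1} already gives linear average-case complexity — or $G$ is not virtually nilpotent, in which case $G/G''$ is an infinite finitely generated metabelian group (an infinite, finitely generated solvable group that is not virtually nilpotent must have an infinite metabelian quotient, since if $G/G''$ were virtually nilpotent one could climb the derived series) and the return-probability estimate of \cite[Chapter 15]{Woess} applies to it with subexponential decay. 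So the proof splits cleanly into these two cases, and in both the average-case complexity is linear; reconciling the boundary of this dichotomy carefully is the one place where a genuine (if short) argument, rather than a routine adaptation of Theorem \ref{Theorem 2}, is needed.
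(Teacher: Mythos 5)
Your overall two-stage strategy (a fast test in an amenable quotient whose kernel has rapidly decaying density, followed by the honest polynomial-time algorithm on that sparse kernel) is exactly the paper's, but your choice of quotient creates two genuine gaps. First, the dichotomy you yourself flag as ``the delicate point'' is the crux, and you leave it unproved: you need that either $G$ is polycyclic or else $G/G''$ is an infinite finitely generated metabelian group of exponential growth, and ``one could climb the derived series'' is not an argument — for finitely generated solvable groups, the assertion that a virtually nilpotent (or finite) metabelianization forces polycyclicity is a nontrivial structural theorem that must be stated and cited. The paper fills precisely this hole with Groves's dichotomy for solvable varieties \cite{Groves}: either every group in $\mathcal S$ is nilpotent-by-finite, so $G$ is polycyclic and Theorem \ref{Theorem 1} applies, or $\mathcal S$ contains a lamplighter group $W=\Z_p\wr\Z$, in which case the relatively free group $G$ (of rank $\ge 2$) maps onto $W$. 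With that citation in hand, your detour through $G/G''$ becomes unnecessary.

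Second, and more fatally, the first stage of your algorithm runs on \emph{every} input, so its cost is not damped by the density factor; your parenthetical ``in any case polynomial suffices'' is wrong — if deciding $\varphi(w)=1$ in $H=G/G''$ costs $O(n^2)$, the average-case complexity of the combined algorithm is $O(n^2)$, not $O(n)$. And a linear-time worst-case word problem algorithm for an arbitrary finitely generated (even relatively free) metabelian group is not a known result: the paper's own bounds are $O(n\log^2 n)$ for matrix groups over $\Q$ and $O(n^2\log n)$ for free solvable groups, and Problem 1 of the paper leaves even the average-case question for metabelian groups open. This is exactly why the paper routes through the specific quotient $W=\Z_p\wr\Z$, where the word problem is genuinely solvable in linear time \cite{Sale} and the return probability is $O(\exp(-c\,n^{1/3}))$ by \cite[Theorem 15.15]{Woess}, so that the computation from the proof of Theorem \ref{Theorem 2} applies verbatim. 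To repair your proof you would have to replace $G/G''$ by a quotient enjoying both of these properties — which brings you back to the lamplighter group and to Groves's theorem.
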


\begin{proof}
It is known (see e.g. \cite{Groves}) that a solvable variety $\mathcal S$ satisfies the following alternative: either all groups in $\mathcal S$ are nilpotent-by-finite or $\mathcal S$ contains a (generalized) lamplighter group, i.e., the wreath product of $\Z_p$ and $\Z$, for a prime $p$.

If all groups in $\mathcal S$ are nilpotent-by-finite, then $G$ is polycyclic and therefore the average-case time complexity of the word problem in $G$ is linear by Theorem \ref{Theorem 1}.

Alternatively, suppose $\mathcal S$ contains the wreath product of $\Z_p$ and $\Z$ that we denote by $W$. 

 Let $G$ be a free group of the variety $\mathcal S$, with $r \ge 2$ free generators. Then $G$ maps onto $W$, and we can use the argument as in the proof of Theorem \ref{Theorem 2} to show that the average-case time complexity of the word problem in $G$ is linear.

\end{proof}

We note that there are many example of varieties satisfying the conditions of Theorem \ref{relatively free}. However, there are also examples of solvable varieties where free groups have unsolvable word problem, see \cite{Kleiman}. We also note that other examples of varieties where free groups have the word problem solvable with linear average-case time complexity are given in our Section \ref{identity}.

%
%

\subsection{The word problem in Thompson's group $F$}\label{Thompson}

One of the popular groups these days, Thompson's group $F$ (see e.g. \cite{CFP}), although not solvable or linear, admits treatment by our methods as far as the average-case complexity of the word problem is concerned. We also note that it is unknown whether or not this group is amenable, and therefore it is unknown whether it has any non-amenable factor groups, so methods of \cite{KMSS2} are not applicable to studying the average-case complexity of the word problem in this group.

\begin{proposition}\label{ThompsonWP}
The average-case time complexity of the word problem in Thompson's group $F$ is linear.
\end{proposition}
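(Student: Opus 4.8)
The plan is to mimic the structure of the proofs of Theorems \ref{Theorem 1} and \ref{Theorem 2}: combine a fast (linear-time) ``screening'' algorithm with a slower honest algorithm for the full word problem in $F$, and show that the slow algorithm is invoked only on a vanishingly small proportion of inputs. Thompson's group $F$ is known to have worst-case word-problem complexity that is at worst polynomial (in fact the standard normal-form algorithm runs in quadratic time, and better bounds are known), so the honest algorithm contributes $O(n^k)$ on the inputs where it is needed. The whole argument will therefore hinge on exhibiting a homomorphism $\varphi$ from $F$ onto a group in which (i) the word problem is solvable in linear time, and (ii) the density of the kernel among all words of length $n$ in the generators of $F$ decays fast enough — faster than $n^{-k}$ — so that $O(n) + (\text{density of } \ker\varphi)\cdot O(n^k) = O(n)$.

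The natural choice is the abelianization $F^{\mathrm{ab}}\cong\Z^2$, via the standard map sending the two generators $x_0,x_1$ to a basis of $\Z^2$. First I would recall that the word problem in $\Z^2$ is trivially linear-time (just add up exponent sums). Then I would invoke the cogrowth/return-probability estimate for $\Z^2$: for a random word $w$ of length $n$ in a fixed generating set, the probability that $w=1$ in $\Z^2$ is $O(1/n)$, since $\Z^2$ has polynomial growth of degree $2$ and the simple random walk on $\Z^2$ returns to the origin with probability $\Theta(1/n)$ at time $n$. (This is exactly part (a) of Lemma \ref{lemma Woess} applied to the polycyclic group $\Z^2$, or one can cite the same Woess reference directly.) Consequently the density of the set of length-$n$ words lying in $\ker\varphi$ is $O(1/n)$, hence the density of words on which we must run the honest quadratic-time algorithm for $F$ is $O(1/n)$.

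Assembling the pieces: the combined algorithm first computes $\varphi(w)$ and checks whether it is trivial in $\Z^2$, in time $O(n)$; if $\varphi(w)\neq 1$ it outputs ``$w\neq 1$''; if $\varphi(w)=1$ it runs the honest word-problem algorithm for $F$, which terminates in time $O(n^k)$ for some fixed $k$. The expected runtime is then
\begin{equation*}
O(n) + O\!\left(\tfrac{1}{n}\right)\cdot O(n^k) = O(n^{k-1}),
\end{equation*}
which is unfortunately not yet linear for $k\ge 3$. So the main obstacle I anticipate is that a single decay factor of $O(1/n)$ from the abelianization is too weak if the honest algorithm for $F$ is only known to run in cubic or worse time. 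There are two ways around this. The cleaner one is to note that the word problem in $F$ is in fact solvable in time $O(n\log n)$ (or at worst quadratic), using the diagram-group / tree-pair normal form, so $k\le 2$ and the bound above is already $O(n)$ (or $O(n\log^2 n)\cdot O(1/n)=o(n)$-type bookkeeping gives $O(n)$ after a more careful split into two strata as in \eqref{avcase3}). The alternative, if one wants to be robust to the exact worst-case bound, is to iterate the screening: after checking the abelianization one can further screen using the action of $F$ on the dyadic rationals evaluated at a few sample points, or use that $[F,F]$ itself surjects onto $\Z^2$, stacking several independent $O(1/n)$ factors to beat any fixed polynomial. Either refinement closes the gap, and I would present the first (citing the known $O(n\log n)$ or quadratic bound for the word problem in $F$) as the main line, remarking that the stratified estimate \eqref{avcase3} with two strata $V_n^{(1)}=\{\varphi(w)\neq 1\}$ and $V_n^{(2)}=\{\varphi(w)=1\}$ makes the computation immediate.
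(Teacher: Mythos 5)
Your proposal is correct and follows essentially the same route as the paper: screen via the abelianization $F/[F,F]\cong\Z^2$ in linear time, use Lemma \ref{lemma Woess}(a) with $d=2$ to get kernel density $O(1/n)$, and invoke the known $O(n\log n)$ worst-case bound for the word problem in $F$ (the paper cites \cite{SU}) so that the combined expected runtime is $O(n)+O(1/n)\cdot O(n\log n)=O(n)$. The detour about what to do if only a cubic-or-worse honest algorithm were available is unnecessary here, since the $O(n\log n)$ bound makes the two-strata computation immediate, exactly as in your main line.
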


\begin{proof}
By \cite{SU}, the worst-case complexity of the word problem in $F$ is $O(n \log n)$. Then, the group $F$ has a presentation with two generators where all relators are in the commutator subgroup (see e.g. \cite{CFP}), so the factor group $F/[F, F]$ is free abelian of rank 2. By Lemma \ref{lemma Woess}, the probability for a random word $w$ in the generators of $F$ to be in the commutator subgroup is
$O(\frac{1}{n})$. Therefore, we can combine two algorithms (as in the proof of our Theorem \ref{Theorem 1}), one of which determines, in linear time, whether or not $w \in [F, F]$, and if $w \in [F, F]$, then the other algorithm determines in time $O(n \log n)$ whether or not $w=1$.

The expected runtime of this combined algorithm is  $O(n) + O(\frac{1}{n}) \cdot O(n \log n)  = O(n)$. This completes the proof.
\end{proof}

\section{The word problem in free products of groups}\label{freeproducts}

Here we prove the following

\begin{theorem}\label{free product}
Let $A$ and $B$ be nontrivial finitely generated groups and suppose the word problem in both $A$ and $B$ has a polynomial time solution.
Let $G=A \ast B$ be the free product of $A$ and $B$. Then the average-case time complexity of the word problem in $G$ is linear.  In fact, the average-case time complexity of reducing an element of  $G$ to the standard normal form is linear.

\end{theorem}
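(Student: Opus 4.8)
The plan is to mirror the strategy used for polycyclic groups and for Thompson's group $F$: decompose $W_n$ into a ``cheap'' part, on which a fast partial procedure succeeds, and a ``sparse expensive'' part, on which we fall back on the honest polynomial-time algorithm, and then show the sparse part is sparse enough to absorb the polynomial cost. Concretely, the honest algorithm is the usual normal form procedure in a free product: read $w = g_1 g_2 \cdots g_n$ left to right, maintaining a reduced syllable sequence; at each step either append the new letter as a new syllable, multiply it into the last syllable if that syllable lies in the same factor (using the polynomial-time word problem in $A$ or $B$ to test whether the resulting syllable is trivial), and if it becomes trivial, delete it and possibly continue merging backwards. The worst case here is genuinely superlinear (e.g. a word like $a^m b a^{-m}$ type cancellations can cascade, and the per-syllable word-problem calls cost a polynomial), so a direct bound does not give linearity.

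The key probabilistic input I would use is a counting/sparsity estimate: among all words of length $n$ over the combined generating alphabet $X = X_A \cup X_B$ (with formal inverses), the proportion whose reduced syllable sequence has total ``cancellation'' exceeding $t$ decays exponentially in $t$. The cleanest way to see this: looking at the sequence of factors that consecutive generators belong to, a reduction event (two adjacent letters collapsing) requires, at minimum, that a long enough block of the word alternates in a constrained way or stays within one factor for a stretch; a standard random-walk / first-return argument shows the expected total number of cancelled letters is $O(1)$ and, more to the point, the number of words of length $n$ whose normal form has length $\le n - 2t$ is at most $|X|^{n} \cdot \rho^{t}$ for some $\rho < 1$ (this is exactly the cogrowth-type phenomenon, and for a free product of nontrivial groups the combined generating set genuinely behaves like a free-ish set at the syllable level). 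I would then stratify: let $V_n^{(j)}$ be the set of $w \in W_n$ whose normal-form computation incurs total cancellation in $[2^{j}, 2^{j+1})$, so $|V_n^{(j)}|/|W_n| \le \rho^{2^{j}}$, while $\overline{T}(V_n^{(j)})$ is bounded by $O(n)$ base cost plus $O(2^{j} \cdot p(n))$ for the extra word-problem calls triggered by cancellations, where $p$ is the polynomial worst-case bound for $A$ and $B$. Summing $\sum_j \rho^{2^{j}} \cdot (O(n) + O(2^{j} p(n)))$ against formula (\ref{avcase3}) gives $O(n)$ since the geometric-in-$2^j$ weights kill the polynomial factor $p(n)$ (one only needs $\rho^{2^j} p(n) \to 0$ uniformly, which holds because for $2^j \ge \log\log n$ already $\rho^{2^j}$ beats any fixed polynomial, and the finitely many small-$j$ terms each contribute only $\rho^{2^j} \cdot O(2^j p(n))$ with bounded $j$-count — wait, that last clause is wrong: small $j$ with bounded count still carry a $p(n)$; the fix is that for small $j$ the cancellation is $O(2^j) = O(1)$ so the honest algorithm is invoked only $O(1)$ times, each call on a syllable which is itself a subword, and one must argue those calls are cheap, or simply note the number of strata is $O(\log n)$ and split at $j$ around $\log\log n$).

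So the honest refinement I actually want is: run the normal-form procedure, but cap the cumulative cancellation at a threshold $\tau(n)$ (say $\tau(n) = \log n$); if the cap is exceeded, abort and run the plain honest algorithm from scratch in worst-case polynomial time. On words with cancellation $\le \tau(n)$ the procedure runs in time $O(n \cdot \tau(n) \cdot (\text{cost of one word-problem call on a bounded-length syllable}))$ — and here is the subtlety that needs care: a syllable, though a product of few cancellations, can still be a long word in $A$ or $B$, so ``bounded-length syllable'' is false in general. The correct accounting is that the total work in word-problem calls across the whole run is $O(p(\ell))$ summed over syllable lengths $\ell$, and since syllable lengths sum to $O(n)$ and $p$ is polynomial, this is $O(p(n))$ total — still superlinear, so the cap on cancellation count alone is not the right knob. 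The genuinely correct move, which the authors presumably make, is subtler: one should observe that for a free product, reducing $g_i$ against the current last syllable only ever requires testing whether a product of two already-reduced syllables from the same factor is trivial, and that such a test, amortized, costs linear total time because the syllable structure means each generator of the input participates in a bounded number of merge-tests before being locked into the normal form. The main obstacle, then, is precisely this amortized-linearity claim for the bookkeeping of syllable merges together with the sparsity/cogrowth estimate for the combined generating set of $A \ast B$; once those two are in hand, the stratification (\ref{avcase3}) closes immediately, and the final ``in fact'' clause about reducing to normal form is automatic because the procedure \emph{produces} the normal form as a byproduct.
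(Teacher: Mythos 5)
Your stratification is by the wrong quantity, and you half-recognize this but do not repair it. The running time of the natural left-to-right reduction algorithm is governed not by the total amount of cancellation (the defect $n-|\overline{w}|$) but by the lengths of the maximal one-factor subwords: a word written entirely in the $A$-alphabet may reduce to a single syllable of essentially full length, so it sits in your lowest-cancellation stratum, yet the algorithm makes $n$ word-problem calls on prefixes of lengths $1,2,\ldots,n$, costing $\Theta(n\cdot p(n))$. Your bound $\overline{T}(V_n^{(j)})=O(n)+O(2^j p(n))$ is therefore false for small $j$, and your proposed fix --- that ``each generator participates in a bounded number of merge-tests,'' giving amortized linear bookkeeping --- is also false for exactly this example: each letter of a long syllable participates in up to $n$ triviality tests, each a call on a word of unbounded length. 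No deterministic amortization rescues this; the worst case really is $\Theta(n^{d+1})$, and the paper never claims otherwise.

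The missing idea is a counting lemma for \emph{simple} words (words over the combined alphabet that are equal in $A\ast B$ to an element of a single free factor): the paper's Lemma \ref{estimate} shows their number among all $(2m)^k$ words of length $k$ is at most $2(2m)^k\exp(-c\sqrt{k})$. This is the nontrivial probabilistic input --- its proof uses non-amenability of $A\ast B$ twice (once to bound words equal to $1$, once to show most words of length $s$ have geodesic length $\ge c_1 s$) together with a block decomposition producing a ``protected'' $B$-letter that cannot cancel. The correct stratification is then over the number of non-overlapping simple subwords of each length $k$: words containing a simple subword of length $\ge C(\log n)^2$ have density $O(n^{-d-1})$ and absorb the worst case $O(n^{d+1})$; words with fewer than $\lfloor n/k^{d+3}\rfloor$ simple subwords of each intermediate length $k$ cost $\sum_k k^{d+1}\cdot n/k^{d+3}=O(n)$ by Lemma \ref{prefix} (which identifies the algorithm's syllables with maximal simple prefixes); and the remaining words have density $\exp(-cn/k^{d+3})$. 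Your cogrowth-by-cancellation estimate, even if true in the form you state it, never enters this accounting, so the argument as proposed does not close.
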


First we make some preliminary observations. We our going to scrutinize a natural algorithm, call it $\mathcal A$, for solving the word problem in $G$, and show eventually that its average-case time complexity (i.e., the expected runtime) is linear with respect to the length of an input word.

We may assume that at least one of the groups $A$ and $B$ has more than 2 elements. Let the group $A$ be generated by nontrivial $a_1, \ldots, a_s$ and the group $B$ by nontrivial $b_1, \ldots, b_t$.

Any word $w$ in the alphabet $a_1^{\pm 1}, \ldots, a_s^{\pm 1}, b_1^{\pm 1}, \ldots, b_t^{\pm 1}$ can be reduced to either  the empty word or to a word of the form

\begin{equation}\label{form}
w_1 w_2 \cdots w_k,
\end{equation}

\noindent where for any $i \le k$, the word $w_i$ represents either a non-trivial element of $A$ or a non-trivial element of $B$, and two successive $w_i$ represent elements of different groups. We will call a {\t syllable} of $w$ any subword of $w$ that is a word just in the $A$-alphabet or just in the $B$-alphabet, and is maximal with respect to this property.


The corresponding simple algorithm, that we call $\mathcal A$, works in stages. The $(n+1)$th stage of this algorithm consists of several steps and targets the $(n+1)$th letter of the input word $w$.

Suppose that after $n$ stages of the algorithm $\mathcal A$, the prefix $v=w(n)$ of length $n$ of the word $w$ is already in the form $w_1 w_2 \cdots w_i$ as above. Let $x$ be the $(n+1)$th letter of the word $w$. Then the $(n+1)$th stage of the algorithm $\mathcal A$ proceeds as follows. If $x$ is not an element of the alphabet that $w_i$ is a word in, then $w_{i+1}=x$.

Now suppose $x$ is from the same alphabet as $w_i$ is. Then check (using  algorithm for the word problem in either $A$ or $B$) whether $w_i x=1$ in the relevant free factor. If $w_i x=1$, we get rid of $w_i$ in the prefix $v=w(n)$ and get  $w(n+1)$ in the form (\ref{form}). Otherwise, we replace $w_i$ by $w_i x$ in $v$.

If the form (\ref{form}) of the prefix $v=w(n)$ satisfies the conditions stated after the display (\ref{form}), then this form for the prefix $v'=w(n+1)$ of length $(n+1)$ also satisfies these conditions.

We say that a word is {\it simple} if (of type $A$ or $B$) if it is nontrivial and is equal in
$A \ast B$ to an element of $A$ or $B$ (respectively). For example, any one-letter word is simple.

Any word $w$ can be represented as a product of subwords $w=u_1u_2,\ldots$, as follows. If $u_1, u_2,\ldots, u_j$ are already constructed and $w$ is graphically equal to $w=u_1u_2,\ldots, u_j u$, where $u \ne 1$, then $u_{j+1}$ is the simple prefix of maximum length of the word $u$.

\begin{lemma}\label{prefix}
Suppose that for some $j$, the prefix $v$ of length $n$ of a word $w$ is $u_1 u_2\cdots u_j$ (where $u_i$ are defined in the previous paragraph), then in the free product $A \ast B$ one has $u_i=w_i$ for $i = 1, \ldots, j$, where $w_1 w_2\cdots w_j$ is the form (\ref{form}) of the word $v$. Moreover, no letter in $w_1 w_2\cdots w_j$ will be changed in the course of further applying the algorithm $\mathcal A$.
\end{lemma}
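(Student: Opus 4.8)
The plan is to induct on $j$. The statement has two parts: first, that the maximal-simple-prefix decomposition $u_1u_2\cdots u_j$ of the length-$n$ prefix $v$ coincides, factor by factor, with the normal-form syllables $w_1w_2\cdots w_j$ of $v$ in $A\ast B$; and second, a stability claim that once the algorithm $\mathcal A$ has processed these $n$ letters, none of the letters recorded in $w_1\cdots w_j$ will ever be touched again. I would first observe that the word $v = u_1\cdots u_j$ is exactly the prefix the algorithm has seen after $n$ stages, and that by construction each $u_i$ is a \emph{simple} word of type $A$ or $B$ (nontrivial, equal in $A\ast B$ to an element of the corresponding free factor). The key structural point is that consecutive $u_i$ and $u_{i+1}$ must be of \emph{different} types: if $u_i$ had type $A$, then by maximality of $u_i$ as a simple prefix of $u_i u_{i+1}\cdots$, the word $u_i$ followed by the first letter of $u_{i+1}$ cannot still be simple; but if that next letter were an $A$-letter, the extended word would still lie in $A$, hence still be simple (or trivial, but then $\mathcal A$ would have cancelled it, contradicting that it is recorded) — so the first letter of $u_{i+1}$ is a $B$-letter and $u_{i+1}$ has type $B$. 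Iterating, the types alternate, so $u_1\cdots u_j$ literally satisfies the defining conditions of the normal form (\ref{form}); by uniqueness of that normal form, $u_i = w_i$ in $A\ast B$ for each $i$.

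For the stability claim I would track what the algorithm $\mathcal A$ actually does on the letters of $u_j$. After processing $u_1\cdots u_{j-1}$, suppose inductively that the algorithm's recorded prefix is in the form $w_1\cdots w_{j-1}$ with $w_i = u_i$ and with these syllables frozen. Now $u_j$ begins with a letter from the alphabet \emph{opposite} to that of $u_{j-1}$ (shown above), so the algorithm opens a new syllable $w_j$ rather than appending to $w_{j-1}$; thereafter, processing the remaining letters of $u_j$ only ever modifies $w_j$ (appending, or internal cancellation within the free factor), never $w_1,\dots,w_{j-1}$. Hence after $n$ stages the recorded form is $w_1\cdots w_j$ with the first $j-1$ syllables untouched during stage $j$; and since $u_j$ is \emph{simple} and nontrivial, the syllable $w_j$ that $\mathcal A$ ends up with is itself nontrivial and of the appropriate type, so $w_j = u_j$. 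This closes the induction on the first part.

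It remains to argue the ``moreover'': that no letter of $w_1\cdots w_j$ is changed by $\mathcal A$ on \emph{any} later stage $n+1, n+2,\dots$. The point is that the only way a recorded syllable $w_i$ gets modified at a later stage is (i) by having a same-type letter appended to it, or (ii) by being cancelled when a same-type letter reduces it to $1$ in the free factor — and both require $w_i$ to be the \emph{last} recorded syllable at that stage. But once $w_j$ is in place, the last syllable is always $w_j$ or something built after it; a later stage can only alter $w_j$ (or syllables to its right), never $w_1,\dots,w_{j-1}$. For $w_j$ itself: the very next letter $x = w_{n+1}$ either is of the opposite type — in which case a brand-new syllable opens and $w_j$ is frozen forever after — or is of the same type, but then appending $x$ cannot cancel $w_j$, because $u_j$ was chosen as the \emph{maximal} simple prefix, so $u_j$ together with $x$ is either non-simple (cannot reduce to $1$ in $A$, as that would make it simple) giving $w_j$ absorbed into a longer-lived syllable structure, or... here I should be careful: the cleanest way is to note that maximality of $u_j$ guarantees that $u_j \cdot (\text{next letter})$ is \emph{not} equal in $A\ast B$ to an element of the relevant free factor, so in particular it is not $1$; hence $\mathcal A$ does not cancel $w_j$, it extends it into what becomes part of the next $u_{j+1}$-block, leaving $w_1\cdots w_{j-1}$ and the content of $w_j$ intact. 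I expect the main obstacle to be exactly this last bookkeeping: making precise, from the maximality in the $u$-decomposition, that the algorithm never back-cancels into a previously completed syllable — phrasing it via ``$u_j\cdot(\text{next letter})$ is not simple, hence not trivial in the free factor'' should settle it.
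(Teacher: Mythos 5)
Your overall strategy (induction on $j$, with the maximality of $u_j$ as a simple prefix doing the work of freezing the syllable $w_j$) is the same as the paper's, and deducing the first assertion from uniqueness of normal forms once alternation of types is established is a reasonable variant of the paper's bookkeeping. However, the decisive step of your argument contains a genuine error. Let $x$ be the letter following $u_j$ and suppose it is of the \emph{same} type as $u_j$, say both of type $A$. You assert that maximality of $u_j$ guarantees that $u_jx$ is not equal in $A\ast B$ to an element of the relevant free factor, ``so in particular it is not $1$.'' But $u_jx$ is a product of two elements of $A$ and hence is \emph{automatically} equal to an element of $A$. What maximality actually gives is that $u_jx$ is not \emph{simple}; since ``simple'' means nontrivial \emph{and} lying in a factor, and $u_jx$ does lie in $A$, maximality forces $u_jx=1$ in $A$ --- the exact opposite of your conclusion. (You make the same inversion earlier, writing that $u_jx$ ``cannot reduce to $1$ in $A$, as that would make it simple'': reducing to $1$ makes a word \emph{non}-simple.) In that configuration the algorithm $\mathcal A$ would cancel $w_j$, so your argument does not establish the freezing claim as written.

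The second gap is that you examine only the single letter at position $n+1$ and declare that if it is of the opposite type then ``a brand-new syllable opens and $w_j$ is frozen forever after.'' That does not follow: the new syllable may itself later cancel to $1$ in its factor, re-exposing $w_j$ to a subsequent same-type letter, and the letter at position $n+1$ may even represent $1$ and be skipped without opening a syllable at all. The paper's proof confronts exactly this scenario: the only way $w_j$ (of type $A$) can be touched later is if the suffix beginning with $u_j$ has a prefix $u_ju'a$ with $u'=1$ in $A\ast B$ and $a$ an $A$-letter, and then $u_ju'a$ is a simple prefix longer than $u_j$, contradicting maximality. This ``look arbitrarily far ahead through a trivial block $u'$'' step is the real content of the lemma and is absent from your proposal. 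Finally, your alternation argument patches the trivial case with ``but then $\mathcal A$ would have cancelled it, contradicting that it is recorded,'' which is circular: the $u_i$-decomposition is defined combinatorially on the input word, independently of what $\mathcal A$ records, and relating the two is precisely what the lemma is supposed to prove.
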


\begin{proof}
We use induction by $j$. The statement is trivial for $j=0$, so let $j \ge 1$ and assume that the statement holds for all $k<j$.

Then, upon getting $w_1 w_2\cdots w_{j-1}$, the algorithm $\mathcal A$ works only with the suffix $u$ of $w=u_1 u_2\cdots u_{j-1} u$. Therefore, eventually the algorithm $\mathcal A$ will get to the last letter of the prefix of $u_j$ and produce the form (\ref{form}) for $u_j$. Since $u_j$ is simple, this form should have just one syllable $w_j$.

Let us assume, without loss of generality, that $w_j$ is an element of $A$.
Subsequent steps of the algorithm $\mathcal A$ could change the word $w_j$ only if, upon removing some subwords equal to 1, immediately after $w_j$  we got a letter from the $A$-alphabet, i.e., in $u$ we would have a prefix of the form  $u_j u' a$, where $u'$ is equal to 1 in $A \ast B$. This prefix then would be a simple word in the $A$-alphabet, in contradiction with the choice of $u_j$ as the simple prefix of maximum length of the word $u$. This completes the proof of the lemma.

\end{proof}

Now let the time complexity of the word problem in $A$ and $B$ be $O(n^d)$ for some $d$, where $n$ is the length of an input word. Then:

\begin{lemma}\label{runtime}
The  worst-case complexity of the algorithm $\mathcal A$ is $O(n^{d+1})$.
\end{lemma}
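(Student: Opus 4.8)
The plan is to count the total work done by the algorithm $\mathcal A$ across all $n$ stages, using the structural information provided by Lemma \ref{prefix}. At stage $n+1$, the algorithm reads a single new letter $x$ and does one of three things: (i) a trivial update when $x$ starts a new syllable of a different type; (ii) a comparison $x=1$ in the relevant free factor; or (iii) a comparison $w_i x = 1$ in the relevant free factor, where $w_i$ is the \emph{current} last syllable of the already-processed prefix. The only potentially expensive step is (iii) (case (ii) is just the special instance where the current syllable is empty), and its cost is governed by the word problem in $A$ or $B$ applied to the word $w_i x$. So the first step of the proof is to bound the length of $w_i$ at the moment this comparison is performed.

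The key observation, which I would extract from Lemma \ref{prefix} together with the definition of the decomposition $w = u_1 u_2 \cdots$, is that a syllable $w_i$ being actively built never becomes longer than the current "simple block" $u$ being scanned, and once $\mathcal A$ finishes a block $u_j$ the syllable $w_j$ it produced is frozen forever. Hence at any stage, the active syllable on which a word-problem query is run is a prefix of some $u_j$, so its length is at most $|u_j| \le n$. Therefore every individual call to the word-problem subroutine in $A$ or $B$ is on an input of length $\le n$, and by hypothesis costs $O(n^d)$. Since there are exactly $n$ stages and each stage makes at most a constant number of such calls (plus $O(1)$ bookkeeping and, occasionally, the $O(1)$-cost deletion of a trivial syllable), the total running time is $n \cdot O(n^d) = O(n^{d+1})$, as claimed.

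The main obstacle — really the only subtlety — is justifying that the word-problem queries are always on inputs of length $O(n)$ rather than on something that could blow up through repeated concatenation. Naively one might worry that after many cancellations the algorithm keeps re-appending letters to a syllable that has already grown large, and then re-queries the word problem on an ever-longer word at every subsequent stage, giving $\sum_{i} O(i^d) = O(n^{d+1})$ anyway — which is fine — but one must also make sure the \emph{length of the queried word} itself never exceeds $n$, since the input to the subroutine is the literal current syllable $w_i x$. This is exactly what Lemma \ref{prefix} buys us: the portion $w_1 \cdots w_{j-1}$ is never touched again, so at stage $n+1$ the active syllable is built only from letters of a single block $u_j$ of the input, and $|u_j| \le |w| \le n$. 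With that in hand the accounting is immediate, and I would present it as: (total time) $\le \sum_{i=1}^{n} \big( O(1) + O(i^d) \big) = O(n^{d+1})$.
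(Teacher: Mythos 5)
Your proposal is correct and follows essentially the same argument as the paper: each of the $n$ letter-processing stages makes at most one call to the word-problem subroutine in $A$ or $B$ on a word of length at most $n$ (since the current syllable consists only of letters already read from the input), costing $O(n^d)$, for a total of $O(n^{d+1})$. The extra discussion of Lemma \ref{prefix} is harmless but not needed here; the trivial observation that the algorithm never creates letters already bounds every queried syllable by $n$.
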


\begin{proof}
When expanding a prefix of an input word $w$ by adding one letter, the algorithm $\mathcal A$ checks (in the worst case) whether the extended syllable is equal to 1 in the relevant free factor. This takes time $O(n^d)$. Multiplying this by the number of letters in $w$, we get $O(n^{d+1})$.

\end{proof}

Now we get to the average-case complexity of the algorithm $\mathcal A$.
Informally, the average-case complexity of $\mathcal A$ is linear because long simple factors of an input $w$ are rare. Thus, we are now going to estimate proportions of words with a substantial number of long factors. Before we proceed with technical lemmas, we recall, for the record, a fact that we will be using repeatedly:
the number of words of length $r$ in a group alphabet with $m$ letters is $(2m)^r$.

\begin{lemma}\label{number}
{\bf (a)} There is a positive constant $c_1$ such that more than half of all words of length $n>0$ are not equal in $G$ to any words of length $< c_1 n$;
\medskip

\noindent {\bf (b)} The number of words of length $r$ equal to 1 in the group $G= A \ast B$ is at most $(2m)^r \cdot \exp (-c_2 \cdot r)$, where $m=s+t$ is the number of generators of $G$ and $c_2$ is a positive constant independent of $r$.
\end{lemma}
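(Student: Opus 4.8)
The plan is to treat the two parts essentially in tandem, since both are statements about exponential scarcity of "degenerate" words, and both should follow from the same underlying fact: the free product $G = A\ast B$ is non-elementary (one of the factors has more than two elements, and the other is nontrivial), hence it has exponential growth, and in fact its \emph{cogrowth} is strictly smaller than that of the ambient free-ish monoid on $m$ generators. For part (b), I would first reduce to a counting estimate on the number of words of length $r$ representing the identity. The standard way is via the cogrowth/return-probability dichotomy: by Kesten's criterion (or, more elementarily, by the Grigorchuk–Cohen cogrowth formula), a finitely generated group in which some nontrivial relation holds among the generators, and which is non-amenable, has cogrowth exponent $<1$; since $A\ast B$ with at least one factor of order $\ge 3$ is non-amenable, the number $N_r$ of words of length $r$ in the $m$-letter group alphabet that equal $1$ in $G$ satisfies $N_r \le (2m)^r e^{-c_2 r}$ for some $c_2>0$. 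Concretely I would cite Lemma \ref{lemma Woess}(b) applied in spirit, or better, invoke the classical result of Cohen/Grigorchuk directly (the relevant references appear to be available to the authors), to obtain this.

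For part (a), I would derive it from (b) by a union bound. A word $w$ of length $n$ is equal in $G$ to \emph{some} word of length $<c_1 n$ precisely when $w u^{-1} = 1$ in $G$ for some word $u$ with $|u| < c_1 n$; the word $w u^{-1}$ has length $n + |u| < (1+c_1)n$. Hence the number of "bad" $w$ of length $n$ is at most $\sum_{\ell < c_1 n} (2m)^{\ell}$ times the per-length count of identity words of length $n+\ell$, i.e. at most roughly $(2m)^{c_1 n}\cdot (2m)^{(1+c_1)n} e^{-c_2(1+c_1)n} \big/ (2m)^{n}$ after dividing by the total $(2m)^n$; more carefully, the fraction of bad words is bounded by $\sum_{\ell<c_1 n}(2m)^{\ell - n}\cdot (2m)^{n+\ell}e^{-c_2(n+\ell)}\cdot (2m)^{-\ell} \le \sum_{\ell < c_1 n}(2m)^{\ell} e^{-c_2 n}$, which for a sufficiently small but fixed $c_1$ (namely $c_1 < c_2/\log(2m)$) is at most $e^{-c_2 n/2}\cdot c_1 n < 1/2$ for all $n>0$ after possibly shrinking $c_1$ once more to absorb the polynomial factor and the small-$n$ cases. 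I would then fix $c_1$ to make this clean.

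The main obstacle, as I see it, is not the union-bound bookkeeping in part (a) but securing the exponential bound of part (b) in the precise form stated — in particular making sure the hypothesis "at least one of $A,B$ has more than two elements" is exactly what is needed. If both factors had order $2$ then $G$ would be the infinite dihedral group, which is amenable (virtually $\Z$), and (b) would be \emph{false} — identity words would be only polynomially scarce. So the proof must genuinely use non-amenability of $A\ast B$, which holds as soon as $|A|\ge 3$ and $|B|\ge 2$ (the free product then contains a free subgroup of rank $2$, e.g. by the Kurosh/ping-pong argument). Thus I would structure the argument as: (i) observe $G$ contains $F_2$, hence is non-amenable; (ii) invoke the cogrowth theorem to get spectral radius $\rho<1$ for the simple random walk, equivalently the return-probability bound $p_r(1) \le C\rho^r$, which rephrased in counting terms is exactly $N_r \le (2m)^r\rho^r$, giving (b) with $e^{-c_2} = \rho$; (iii) deduce (a) from (b) by the union bound above. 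I expect step (ii) — pinning down the exact citation and the translation between random-walk return probability and word count — to require the most care, though it is routine once the right theorem is named.
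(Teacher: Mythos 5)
Your treatment of part (b) is essentially the paper's: the authors also argue that $G=A\ast B$ contains a non-cyclic free subgroup, hence is non-amenable, so by Kesten's criterion (they cite \cite[Corollary 12.5]{Woess}) the spectral radius of the simple random walk is $\rho<1$, and then read off the count of identity words as $(2m)^r\rho^r=(2m)^re^{-c_2r}$ ``from the definition of the spectral radius''; your remark that the standing hypothesis that one factor has at least $3$ elements is exactly what rules out the amenable infinite dihedral case matches the paper's earlier ``we may assume'' remark. Where you genuinely diverge is part (a): the paper does not derive (a) from (b) at all, but cites \cite[Proposition 8.2]{Woess} directly (spectral radius $<1$ implies that the $n$-step distribution is supported, up to a set of measure $<1/2$, on elements of word length $\ge c_1n$). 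Your union bound over all short words $u$ with $wu^{-1}=_G1$ is a correct and more self-contained substitute, at the cost of a little bookkeeping: the fraction of bad words is really $\sum_{\ell<c_1n}(2m)^{2\ell}e^{-c_2(n+\ell)}$ rather than your $\sum_{\ell<c_1n}(2m)^{\ell}e^{-c_2n}$ (each length-$\ell$ target $u$ contributes a factor $(2m)^{\ell}$ for the choice of $u$ \emph{and} the identity-word count at length $n+\ell$ is $(2m)^{n+\ell}e^{-c_2(n+\ell)}$), so the right threshold is $c_1<c_2/(2\log(2m))$; this changes nothing. The one point you should make explicit is that for the bound $N_r\le(2m)^r\rho^r$ to hold for \emph{every} $r$ (which your union bound uses, and which the ``for all $n>0$'' in (a) needs), one invokes supermultiplicativity of return probabilities, $p_{r+s}\ge p_rp_s$, so that $p_r\le\rho^r$ for all $r$, and for the finitely many small $n$ with $e^{-c_2n}\ge1/2$ one disposes of the $\ell=0$ term separately (no nonempty word of length $\le 2$ in nontrivial generators equals $1$ in $A\ast B$ unless it already does so in a factor). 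With those two sentences added, your argument is complete and arguably more transparent than the paper's citation.
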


\begin{proof}

Denote by $P(n)$ the set of words in a group alphabet on $m$ letters that are equal to 1 in $G$. The number $\limsup_{n\to\infty} P_n^{\frac{1}{n}}$ is called the  spectral radius of the simple random walk on the Caley graph of $G$, or just the spectral radius of $G$.

First we note that, since the group $G= A \ast B$ contains a non-cyclic free group, it is non-amenable. Therefore, by \cite[Corollary 12.5]{Woess}, the spectral radius of  $G$ is $<1$.  Then there is a positive constant $c_1$ such that more than half of all words of length $n$ are not equal in $G$ to any words of length $< c_1 n$ (see \cite[Proposition 8.2]{Woess}).

Also, it follows from the definition of the spectral radius that there is a positive constant $c_2$ such that the number of words of any length $r$ equal to 1 in $G$ is at most $(2m)^r \cdot \exp (-c_2 r)$, where $m=s+t$ is the number of generators of the group $G$ and $(2m)^r$ is the total number of words of length $r$ in the union of the $A$-alphabet and the $B$-alphabet.
\end{proof}

\begin{lemma}\label{estimate}
There is a positive constant $c$ such that the number of simple words $w$ of any length $k$ {\color{blue}of type $A$} is at most $(2m)^k \exp(-c \cdot p)$, where $p = \lfloor \sqrt{k} \rfloor$, the floor function of $\sqrt{k}$.

\end{lemma}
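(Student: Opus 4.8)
The plan is to bound the number of simple words of length $k$ in the $A$-alphabet by comparing them to words that become trivial in $G$. Suppose $w$ is a simple word of length $k$ in the $A$-alphabet; then $w$ equals some element $a \in A$ in $G$, and by Lemma \ref{number}(a) we may as well assume $a$ has a representative word $\bar a$ in the $A$-alphabet of length $< c_1 k$ (more than half of all words of length $k$ over the $A$-alphabet have this property, and a simple word of length $k$ over the full alphabet is in particular a word over the $A$-alphabet). Then $w \bar a^{-1} = 1$ in $G$, so $w$ lies in a coset of $\mathrm{Ker}$ of the evaluation map determined by the trivial words; more usefully, $w \bar a^{-1}$ is a word of length $< k + c_1 k = (1+c_1)k$ that is trivial in $G$. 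This is the rough mechanism, but to get the subexponential factor $\exp(-c\sqrt{k})$ we need to be smarter, since simply counting trivial words of length $(1+c_1)k$ only saves an exponential factor — which is in fact more than enough. Let me reorganize.

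The cleaner approach: fix $p = \lfloor \sqrt{k} \rfloor$ and cut a simple word $w = w' w''$ where $w'$ is the prefix of length $k - p$ and $w''$ is the suffix of length $p$. Since $w$ is simple (equal to some $a \in A$) and $w'$, being a word in the $A$-alphabet, is equal to some $a' \in A$, the suffix $w''$ is equal in $A$ (hence in $G$) to $(a')^{-1} a$. Now the key point is a counting trade-off: most words $w''$ of length $p$ are *not* equal in $G$ to any single fixed short word. More precisely, I would like to argue that for each fixed prefix $w'$ (there are at most $(2m)^{k-p}$ of them), the number of length-$p$ suffixes $w''$ that can complete $w'$ to a simple word is at most $(2m)^p \exp(-c \cdot p)$ for some $c > 0$; multiplying gives $(2m)^k \exp(-c p)$, and the factor $2$ absorbs rounding. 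To see the per-prefix bound: given $w'$ equal to $a' \in A$, a completion $w''$ makes $w' w''$ simple iff $w''$ evaluates in $A$ to $(a')^{-1} a$ for some $a \in A$ — but that is automatic, so this decomposition alone does not constrain $w''$. The actual constraint is that $w' w''$ must be equal in $G$ to an element of $A$, which for a word already entirely in the $A$-alphabet is automatic. So the length-$k$ word being "simple in the $A$-alphabet" is in fact automatic once it is a word in the $A$-alphabet — unless "simple" is being used to mean the word (over the *full* $m$-letter alphabet) happens to represent an element of $A$.

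Re-reading the definition in the excerpt, a word is simple of type $A$ if it is nontrivial and equal in $A*B$ to an element of $A$; so the relevant count is over words in the *full* alphabet, and the constraint is genuine: among all $(2m)^k$ words of length $k$, few represent $A$-elements. So I would argue: if $w$ (length $k$, full alphabet) represents $a \in A$, write $w = w' w''$ with $|w''| = p$; then $w''$ represents $g^{-1} a$ where $g \in A * B$ is represented by $w'$. The map $w \mapsto (w', w'' \cdot (\text{some fixed word for } g^{-1}a)^{-1})$... this is getting circular. The honest route is: $w' w'' = a \in A$ means $w'' = g^{-1} a$ in $G$ where $g = w'$ in $G$; since $g^{-1} a$ does not depend on the *word* $w'$ but only on $g$, and since (Lemma \ref{number}(a)) for more than half of all length-$p$ words $u$, $u$ is not equal in $G$ to any word of length $< c_1 p$ — in particular $u$ cannot equal $g^{-1}a$ unless the latter has a long geodesic. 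I expect the actual proof to run the count through Lemma \ref{number}(b): a simple word $w = a$ of length $k$ gives, after appending a shortest $A$-word $\bar a$ for $a^{-1}$ (of length $O(k)$, or better, using that $a$ lies in $A$ which is "small" inside $G$ in the relevant sense), a trivial word, and then apply \ref{number}(b). The exponent $\sqrt{k}$ rather than $k$ must come from *not* controlling $|\bar a|$ well — i.e., $\bar a$ could be as long as $k$ itself — so one only gets to apply the trivial-word bound on a length-$O(k)$ interval but with the $A$-part "folded back," saving only on a $\sqrt k$-length window where one can guarantee genuine cancellation.

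The main obstacle, then, is extracting the $\exp(-c\sqrt k)$ saving: the difficulty is that a simple word of type $A$ can legitimately be very long as a word even though it represents an $A$-element, so one cannot directly concatenate with a short $A$-representative. I would resolve this by looking at the *last* $p = \lfloor\sqrt k\rfloor$ letters: since $w = a \in A$, the suffix $w''$ of length $p$ satisfies $w'' =_G (w')^{-1} a$ where $w' =_G g$, so the concatenation $w' w''$ cancels down to $A$; applying Lemma \ref{number}(a) to the suffix block of length $p$, with probability $> 1/2$ a random length-$p$ word is not equal in $G$ to anything of length $< c_1 p$, which would force $(w')^{-1} a$ to be "long" and give a contradiction-counting bound. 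Iterating or summing over the at most $(2m)^{k-p}$ prefixes then yields the claimed bound $2(2m)^k \exp(-c p)$. The bookkeeping of which half of the words gets excluded, and ensuring the constant $c$ is uniform, is the part I would expect to require the most care, but it is routine given Lemma \ref{number}.
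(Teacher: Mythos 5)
There is a genuine gap here: your proposed mechanism cannot produce the factor $\exp(-c\sqrt{k})$, and the two ideas that actually drive the paper's proof are absent. First, the single cut $w=w'w''$ with $|w''|=p$ is structurally too weak. Lemma \ref{number}(a) excludes at most half of the length-$p$ suffixes, which saves only a constant factor, not $\exp(-cp)$; and the ``contradiction'' you invoke is not one -- there is nothing inconsistent about $(w')^{-1}a$ having a long geodesic, since $a$ ranges over all of $A$ and may itself be long in the generators of $G$. The real constraint your decomposition yields is that $w''$ must represent an element of the coset $(w')^{-1}A$ lying in the ball of radius $p$, and bounding the number of length-$p$ words that hit a prescribed coset of $A$ is not something Lemma \ref{number} provides: it amounts to the transience of $A$ for the random walk on $A\ast B$, which is essentially the statement of Lemma \ref{estimate} itself. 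You half-acknowledge this circularity and then declare the bookkeeping routine; it is not.

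The paper's proof rests on two ideas you would need to supply. (i) A dichotomy: either $w$ contains a subword of length $q\ge c_1 p$ equal to $1$ in $G$ -- and then Lemma \ref{number}(b) applied to that subword, summed over its position and length, already gives the bound $(2m)^k\exp(-c_1c_2 p)$ up to an adjustable polynomial factor -- or $w$ contains no such subword. (ii) In the second case, a multiplicative block argument: split $w$ into $l\ge p$ blocks of length $\ge p$ each. Using Lemma \ref{number}(a), a constant proportion of the choices for each block can be arranged (by concatenating two words with long geodesic forms, inserting a $B$-letter if necessary) to contain a $B$-letter buried at distance $\ge c_1 s_i-1$ from both ends of the block; the hypothesis of case (ii) guarantees this letter cannot cancel during reduction to normal form, which contradicts $w$ being simple of type $A$. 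Hence each block is confined to a set of density $\le 1-c_3$, and the product over $\ge p$ blocks gives $(2m)^k(1-c_3)^p$. Note that (i) is what makes the blocks in (ii) effectively independent -- without it a planted $B$-letter could cancel against distant material -- and (ii) is what turns a per-block constant loss into an exponent of order $\sqrt{k}$. Your single-suffix scheme has no analogue of either step.
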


\begin{proof} Let $c_1$ and $c_2$ be constants defined in Lemma \ref{number}.
Assume first that $w$ has a subword $v$, of length $q \ge c_1 p$, which is equal to 1 in $G= A \ast B$. By Lemma \ref{number}, the number of words of length $q$ equal to 1 in $G$ is at most $(2m)^q \cdot \exp (-c_2 \cdot q)$.

Write the word $w$ as $w=v'vv''$. If the length $r$ of the prefix $v'$ is fixed, the number of words $w$ like this is at most $(2m)^r (2m)^q \exp(-c_2q)(2m)^{k-r-q} =
\exp(-c_2q) (2m)^k$. Since $r$ can take less than $k$ different values, we have less than $k \exp(-c_2 q)(2m)^k$ different words containing a simple subword of length $q$.


Summing up over all $q \ge c_1 p$, we get (as the sum of a geometric progression) a number of the magnitude $O((2m)^k (k+1) \cdot \exp (-c_2 c_1 p))$. Upon decreasing $c_i$ if necessary, we can get rid of $(k+1)$ and get the claimed estimate in this case.

Now we consider the case where $w$ is a simple word without simple subwords of length $q \ge c_1 p$ equal to 1 in $G$.

By the definition of $p$, it is possible to split the word $w$ into subwords $w_1, \ldots, w_l$ of fixed lengths $p_i \ge p$, where $l \ge p$.
It is sufficient to show that, for large enough $k$ and $p$, there are no more than $(2m)^{p_i} \lambda$ ways to select each $w_i$, for some constant $\lambda < 1$. Indeed, in that case the number of different words $w$ will be not greater than $(2m)^k \lambda^p$, as desired.

By Lemma \ref{number}, there are more than $\frac{1}{2}(2m)^{s_i}$ different words $v_i$ of length $s_i$, where $|s_i - \lfloor \frac{p_i}{2}\rfloor| \le 1$, such that the geodesic form $g_i$ of $v_i$ in $A \ast B$ has length $> c_1 s_i$.

Let $v_i$ and $v_i'$ be two words like that. If the last letter of the geodesic form $g_i$ of $v_i$ and the first letter of the geodesic form $g_i'$ of $v_i'$ are from different alphabets, then, if $g_i$ and $g_i'$ are of length $\ge c_1s_i$, the length of the geodesic word $g_i g_i'$ has length $> c_1s_i$ and has a letter $b$ from the $B$-alphabet at the concatenation of $g_i$ and $g_i'$.


A letter from the $B$-alphabet (we keep the same notation $b$ for it)  will still be there after any reduction of the
word $u_i=v_iv_i'$ to the standard form, and moreover the
letter $b$ will be at a distance $\ge c_1s_i - 1$ from
the beginning as well as from the end of $u_i$.

If the last letter of $g_i$ and the first letter of $g_i'$ are from the same alphabet  (say, from the $A$-alphabet), then, by inserting between them a letter $u$ from the $B$-alphabet, we will again get a $B$-letter surrounded by $A$-letters, and it will be at a distance $\ge c_1s_i - 1$ from
the beginning as well as from the end of $u_i$.

The number of words $u_i$ like that is greater than $c_3 (2m)^{s_i}$ for some positive constant $c_3$ because $v_i$ and $v_i'$ were selected from a set that contains more than half of the words of length $s_i$.

For any words $w_i$ obtained as above, this letter $b$ will not cancel out upon any reduction of $w=w_1  \cdots w_d$ to the standard form. Indeed, for $b$ to cancel out, there should be a subword of $w$ equal to 1 in $G$ and having length $\ge c_1 p$, but there are no subwords like that. Thus, we get a contradiction with the simplicity of type $A$ of the word $w$.

Therefore, in a factorization $w=w_1  \cdots w_l$, one can select each $w_i$ from a set of cardinality at most $(2m)^{s_i}-c_3(2m)^{s_i} =(1-c_3)(2m)^{s_i}$. Hence we can get at most $(2m)^k (1-c_3)^p$ of products of such $w_i$, which implies the desired estimate in the second case as well.

\end{proof}

Let now $M(k)$ be the set of all simple group words of length $k$ in the given alphabet with $m$ letters. By Lemma \ref{estimate}, the cardinality of $M(k)$ is at most $\exp(-c p)(2m)^k$ for some $c>0$, where $p = \lfloor \sqrt{k} \rfloor$.

Then, let $M(n, k, l)$ denote the set of group words of length $n$ in the given alphabet that have at least $l$ non-overlapping subwords of length $k$ from $M(k)$.

Let us now estimate the ratio of the number of words in $M(n, k, l)$ to the total number $(2m)^n$ of words of length $n$. Let us call this ratio the {\it density} of the set $M(n, k, l)$. Recall also that our assumption is that the time complexity of the word problem in the groups $A$ and $B$ is $O(n^d)$ for some $d$, where $n$ is the length of an input word. Then we have:

\begin{lemma}\label{density} There is a constant $C>0$ such that:\\
{\bf (a)} If $k>C$ and  $l = \lfloor \frac{n}{k^{d+3}} \rfloor$, then the density of the set $M(n, k, l)$ is at most
$\exp (-c \cdot n/(k^{d +3}))$, where the constant $c$ is provided by Lemma \ref{estimate}.
\medskip

\noindent {\bf (b)} The density of the set $M(n, k, 1)$ is at most $n^{-d-1}$  if $k > C \cdot (\log n)^2$.

\end{lemma}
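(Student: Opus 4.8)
The plan is to prove both parts by a first-moment (union-bound) estimate over the possible positions of the ``bad'' length-$k$ blocks, using only the fact built into the definition of $M(k)$ (and ultimately supplied by Lemma~\ref{estimate}): with $p=\lfloor\sqrt k\rfloor$, the set $M(k)$ contains at most $\exp(-cp)(2m)^{k}$ words of length $k$. Now fix $k>C$, where $C$ is a constant to be enlarged as needed, and put $l=\lfloor n/k^{d+3}\rfloor$; we may assume $n\ge k^{d+3}$, so that $l\ge 1$, and then an elementary computation gives $n/l<2k^{d+3}$. A word $w\in M(n,k,l)$ carries, by definition, $l$ pairwise disjoint length-$k$ windows on each of which $w$ reads a word of $M(k)$. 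There are at most $\binom{n}{l}$ possible (unordered) sets of starting positions of such windows, and once these windows are fixed the number of words of length $n$ that lie in $M(k)$ on every one of them is at most $\exp(-cpl)(2m)^{n}$. Hence the density of $M(n,k,l)$ is at most $\binom{n}{l}\exp(-cpl)$, and it suffices to see that for $k$ large the combinatorial factor is absorbed. From $\binom{n}{l}\le(en/l)^{l}$ and $n/l<2k^{d+3}$ one gets $\ln\binom{n}{l}=O(l\log k)$, with the implied constant depending only on $d$; since $p=\lfloor\sqrt k\rfloor$ grows faster than $\log k$, there is $C=C(c,d)$ with $\ln\binom{n}{l}\le\frac{1}{2}cpl$ for all $k>C$. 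Then the density is at most $\exp(-\frac{1}{2}cpl)$, and enlarging $C$ once more so that $\frac{1}{2}pl\ge n/k^{d+3}$ (which holds because $p\ge\frac{1}{2}\sqrt k$ and $l>\frac{1}{2}n/k^{d+3}$ for $k$ large), we conclude that the density of $M(n,k,l)$ is at most $\exp(-c\,n/k^{d+3})$.

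\textbf{Proof of (b).} This is the case $l=1$ with a cruder count. For each of the at most $n$ starting positions $i$, the number of words of length $n$ whose length-$k$ block starting at $i$ lies in $M(k)$ is at most $\exp(-cp)(2m)^{n}$; summing over $i$ shows that the density of $M(n,k,1)$ is at most $n\exp(-c\lfloor\sqrt k\rfloor)$. If $k>C(\log n)^{2}$ then $\lfloor\sqrt k\rfloor>\frac{1}{2}\sqrt C\log n$, hence $\exp(-c\lfloor\sqrt k\rfloor)\le n^{-c\sqrt C/(2\ln 2)}$; choosing $C$ large enough, depending on $c$ and $d$, that $c\sqrt C/(2\ln 2)\ge d+2$ yields density at most $n\cdot n^{-(d+2)}=n^{-d-1}$. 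Taking $C$ to be the larger of the two constants found above proves the lemma.

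\textbf{Where the difficulty lies.} The only genuinely delicate point is the balance in (a): the saving per block is merely $\exp(-c\sqrt k)$, sub-exponential in $k$, while the number of ways to place the $l\approx n/k^{d+3}$ blocks contributes $\binom{n}{l}=\exp(\Theta(l\log k))$. Both factors have the shape $\exp\bigl(l\cdot(\text{a function of }k)\bigr)$, so the estimate goes through precisely because $\sqrt k$ eventually dominates $\log k$; this is the role of the hypothesis $k>C$, and also the reason why using the larger power $k^{d+3}$ (rather than something smaller) in the definition of $l$ costs nothing. Part (b) is then routine.
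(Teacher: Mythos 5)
Your proof is correct and follows essentially the same route as the paper's: a union bound over placements of the $l$ disjoint bad windows, the cardinality bound $|M(k)|\le \exp(-cp)(2m)^k$ applied once per window, and the observation that the combinatorial factor $\exp(O(l\log k))$ is absorbed by the saving $\exp(-c\lfloor\sqrt{k}\rfloor l)$ once $k>C$. The only difference is bookkeeping — you bound the placements by $\binom{n}{l}\le(en/l)^l$ where the paper uses a divider count $\binom{n+2l}{2l}$ and Stirling — and your version of the arithmetic is if anything cleaner.
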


\begin{proof}
We start with part (a). Every word in $M(n, k, l)$ can be described as follows. Think of $n$ bins separated by $2l$ dividers (enumerated left to right), so that between any two dividers numbered
$2j-1$ and $2j$, there are $k$ bins where we can ``put" a word from $M(k)$. In the remaining bins, we just put arbitrary words.  We note that positions of all $2l$ dividers are  unambiguously determined by positions of $l$ odd-numbered dividers.

Between the dividers numbered $2j-1$ and $2j$, one can put no more than  $\exp(-c p)(2m)^{k}$ different words by Lemma \ref{estimate}. Summing up over all such pairs of dividers, we get at most $\exp(-cpl)(2m)^{kl}$ different possibilities for words in $M(n, k, l)$. Then, there are $(2m)^{n - kl}$ ways to write letters in the remaining places.

Thus, altogether we have at most $\exp(-cpl)(2m)^n$ words in $M(n, k, l)$ with a fixed position of all the dividers.  The density of the set of words from $M(n, k, l)$ with a fixed position of dividers is therefore at most
\begin{equation} \label{eq2}
\exp(-cpl)  < exp(-cn/2k^{d+2.5}).
\end{equation}

Now select $k\ge C$. Let us get an upper bound on the number of dividers. For an upper bound, it is sufficient to just count the number of ways to position $2l$ dividers without any conditions on the size of spaces between them. This latter number is equal to $\binom{n+2l}{2l}$.

By the Stirling formula, this is bounded from above by $\frac{(n+l)^{n+l}}{(l)^{l} n^n}$. After canceling $n^{n+l}$ in the numerator and the denominator, we get $(1+\varepsilon)^{n+l}$ in the numerator, where $\varepsilon = \frac{l}{n} \le \frac{1}{k^{d+3}}$. Thus, the
numerator is less than $\exp (2\varepsilon n)$. In the denominator, we have $\varepsilon^{\varepsilon n}$. Therefore, $\binom{n+2l}{2l} \le \exp (2 \epsilon n +|log \epsilon|(2n/k^{d+3}) < \exp (3 n/k^{d+3} + (d+3) log k (2n/k^{d+3}).$

Combining this with (\ref{eq2}), we get that the density of the set $M(n, k, l)$ is at most $$\exp (-cn/2k^{d+2.5}+3 n/k^{d+3} +(d+3) \log k(2n/k^{d+3})).$$ Summing up the exponents, we get an exponent bounded from above by $-cn/k^{d+3}$ because the constant $C$ can be selected so that if $k > C$, one has $c/2k^{2.5} > 3/k^{d+3}+(d+3)\log k/k^{d+3} + c/k^{d+3}$. This completes the proof of part (a) of the lemma.


Now, for part (b), let us estimate the number of words in the set $M(n, k, 1)$, with $k > C (\log n)^2$. Here we have an upper bound of $\exp(-cp)$ instead of $\exp(-cpl)$ in (\ref{eq2}), and
$\binom{n}{2}$  (instead of $\binom{n}{l}$) is bounded just by $n$. Thus, the number of words in $M(n, k, 1)$ is bounded from above by

$$n \cdot \exp(-cp) \le n \exp (-c \cdot C^{1/2}\log n) = n^{-c \cdot C^{1/2} + 2} \le n^{-d-1},$$

\noindent if $C$ is selected large enough compared to both $c$ and $d$. This completes the proof of part (b) of the lemma.

\end{proof}

\medskip

\noindent {\bf Proof of Theorem \ref{free product}.} It is clearly sufficient to prove the ``In fact" part of the theorem.  Let $w$ be an input word of length $n$ of the algorithm $\mathcal A$ (see remarks after the statement of Theorem \ref{free product} in the beginning of this section). Recall that by Lemma \ref{runtime}, the worst-case complexity of the algorithm $\mathcal A$ is bounded by $Kn^{d+1}$ for some constant $K$. The constants $c$ and $C$ below are provided by Lemma \ref{estimate} and Lemma \ref{density}, respectively.

Let (\ref{form}) be the canonical form of $w$.
To estimate the average-case complexity of the algorithm $\mathcal A$, let us split the set of all words of length $n$ as a union of the following subsets $V_n^{(j)}$, and then estimate contribution of each $V_n^{(j)}$ to the average-case complexity, cf. formula (\ref{avcase3}) in the Introduction.

\medskip

\noindent {\bf 1.} Let $V_n^{(1)}$ denote the set of words of length $n$ that contain a simple subword of length $\ge C(\log n)^2$. By Lemma \ref{density}(b), the density of this set is $O(n^{-d-1})$, and by Lemma \ref{runtime}, the runtime is $\overline{T}(V_n^{(1)}) \le K n^{d+1}$ for some constant $K$.
Therefore, contribution of the corresponding summand in formula  (\ref{avcase3}) will be $O(1)$.
\medskip

\noindent {\bf 2.}  Let $V_n^{(2)}$ denote the set of words $w$ of length $n$ that are not in $V_n^{(1)}$ and for any integer $k < C(\log n)^2$, $w$ contains less than $[n/k^{d+3}]$ simple subwords of length $k$.

Then by Lemma \ref{prefix}, for any integer $k$ in the interval $(C, C(\log n)^2)$, the reduced form of the word $w$ will have less than $[n/k^{d+3}]$   syllables of length $k$ produced by the algorithm $\mathcal A$ while working on subwords of length $k < C(\log n)^2$. Therefore, producing these syllables takes time $\le K(k^{d+1}[n/k^{d+3}]) <2K n/k^2$. Summing up over all relevant values of $k$, we see that producing all syllables of length $\ge C$ takes time $O(n)$.

Producing other syllables takes time $\le C_0$ for each, for some constant $C_0$, because those syllables are produced by the algorithm $\mathcal A$ while working on subwords of bounded length $\le C$. Thus, the total time spent on producing those syllables is  $\le C_0 n$, so the total contribution of the set $V_n^{(2)}$ to the  formula  (\ref{avcase3}) will be $O(n)$.
\medskip

\noindent {\bf 3.}  All the remaining words of length $n$ form a set that we denote by
$V_n^{(3)}$. This set is the union, over all $k$, $C<k< C(\log n)^2$, of the subsets $M(n,k,l)$, where  $l=\lfloor n/k^{d+3}\rfloor$.

By Lemma \ref{density}(a), density of any $M(n,k,l)$ is $\le \exp(-cn/k^{d+3})$.
Therefore, density of the whole set $V_n^{(3)}$ is less than $C (\log n)^2 \exp(-c n/k^{d+3})$. After multiplying this last quantity by the runtime $O(n^{d+1})$  of the algorithm $\mathcal A$ on the set $V_n^{(3)}$, we get that the contribution of the set $V_n^{(3)}$ to the  formula  (\ref{avcase3}) will be $O(1)$.

Summing up, we see that the average-case time complexity of the algorithm $\mathcal A$ is $O(n)$, and this completes the proof.  $\Box$



\section{The subgroup membership problem in free products of groups}\label{membership}

Let $G$ be a group and $H$ a finitely generated subgroup of $G$.
We say that the membership problem in $H$ is solvable in polynomial time if, given a word $w$ of length $n$ representing an element of $G$, one can decide in time polynomial in $n$ whether or not $w$ actually represents an element of $H$. Then we have:

\begin{theorem}\label{membershipproblem} Let $G=A \ast B$ be the free product of groups $A$ and $B$. Suppose that for any finitely generated subgroup $H$ of $A$ or $B$ the membership problem in $H$ is solvable in polynomial time $O(n^d)$. Then:

\medskip

\noindent {\bf (a)} For any finitely generated subgroup $H$ of $G$, the membership problem in $H$ is solvable in polynomial time $O(n^{d+1})$;
\medskip

\noindent {\bf (b)} Suppose a subgroup $H \subset G$ has trivial intersection with any subgroup of $G$ conjugate (in $G$) to $A$ or $B$. Then the (worst-case) time complexity of the subgroup membership problem in $H$ is not higher than the time complexity of putting words into free product normal form in $G$.
\medskip

\noindent {\bf (c)} If $A$ and $B$ are nontrivial, then the average-case complexity of the
subgroup membership problem for $G$ is linear.

\end{theorem}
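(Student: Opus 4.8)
The plan is to prove the three parts in order, with part (c) leaning on the machinery already developed for Theorem \ref{free product}. For part (a), I would run a normal-form algorithm on the input word $w$ of length $n$, essentially the algorithm $\mathcal{A}$ from Section \ref{freeproducts}, reducing $w$ to its standard form $w_1 w_2 \cdots w_k$; by the analogue of Lemma \ref{runtime} (using the membership algorithms in the free factors to decide when a syllable is trivial), this takes time $O(n^{d+1})$. A standard fact about free products (a consequence of the normal form theorem / the structure of the membership problem, e.g.\ via Stallings-type folding of the subgroup graph, or directly via the Reidemeister--Schreier rewriting adapted to free products) reduces membership in a finitely generated $H \le A \ast B$ to finitely many membership tests in finitely generated subgroups of $A$ and $B$ applied to the syllables, each costing $O(n^d)$; multiplying by $O(n)$ syllables gives $O(n^{d+1})$ overall.

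For part (b), if $H$ meets every conjugate of $A$ and of $B$ trivially, then $H$ is itself a free group (Kurosh subgroup theorem), and more to the point: a word $w$ lies in $H$ only if its normal form, read as a sequence of syllables, traces a closed or $H$-terminating path in the subgroup graph whose only way to "use up" a nontrivial syllable is a full cancellation. The key observation is that under the triviality-of-intersection hypothesis no proper syllable can be absorbed, so deciding $w \in H$ amounts to: compute the normal form of $w$ (which is exactly the word problem algorithm of $G$), then compare with the normal forms of the finitely many generators of $H$; this costs no more than the word problem in $G$ plus an $O(n)$ bookkeeping overhead, hence is $O(\text{word problem in } G)$.

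For part (c), I would mimic the proof of Theorem \ref{free product} essentially verbatim. Split $W_n$ as $V_n^{(1)} \cup V_n^{(2)} \cup V_n^{(3)}$ exactly as there: $V_n^{(1)}$ is words with a simple subword of length $\ge C(\log n)^2$, whose density is $O(n^{-d-1})$ by Lemma \ref{density}(b) and on which the runtime is $O(n^{d+1})$ by part (a); $V_n^{(2)}$ is words not in $V_n^{(1)}$ with few simple subwords of each length $k \in (C, C(\log n)^2)$, on which — by Lemma \ref{prefix}, which applies to the normal-form computation regardless of what we do with it afterwards — producing all syllables takes time $O(n)$, and then the residual membership bookkeeping (comparing the $O(n/\log^2 n)$-or-fewer long syllables against generators of $H$, plus $O(1)$ per short syllable) is also $O(n)$; and $V_n^{(3)}$ is the union over $k$ of the sets $M(n,k,l)$ with $l = \lfloor n/k^{d+3} \rfloor$, of total density $\le C(\log n)^2 \exp(-cn/k^{d+3})$ by Lemma \ref{density}(a), so that multiplying by the worst-case bound $O(n^{d+1})$ from part (a) still gives contribution $O(1)$. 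Summing the three contributions via formula (\ref{avcase3}) yields linear average-case complexity.

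The main obstacle I anticipate is part (a): making precise and correct the reduction of subgroup membership in $A \ast B$ to membership in the free factors, applied syllable-by-syllable, while keeping the cost at $O(n^{d+1})$ rather than something worse. One has to be careful that the subgroup graph for $H$ has bounded size (independent of $n$, since $H$ is finitely generated), so that each step of tracing the normal form of $w$ through it — including the finitely many "does this syllable, as an element of the relevant free factor, lie in the label group of the current vertex" queries — costs only $O(n^d)$, and that the normal form itself is produced in $O(n^{d+1})$. Once part (a) is in hand with the right exponent, parts (b) and (c) follow the templates already in the paper with only cosmetic changes.
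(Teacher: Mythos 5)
Your overall strategy coincides with the paper's: build a finite subgroup graph for $H$, trace the normal form of $w$ through it syllable by syllable using the membership oracles in $A$ and $B$ for part (a), observe that the trivial-intersection hypothesis degenerates these oracle calls to word-problem calls for part (b), and reuse the stratification $V_n^{(1)}\cup V_n^{(2)}\cup V_n^{(3)}$ from the proof of Theorem \ref{free product} for part (c). Part (c) is essentially complete as you wrote it, and matches the paper's (briefer) argument: since reducing to the form (\ref{form}) has linear average-case cost and both the word-problem and the membership queries per syllable are $O(n^d)$, the estimates of Lemmas \ref{prefix}--\ref{density} transfer verbatim.

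The gaps are in (a) and (b), and they are exactly where you deferred to a ``standard fact.'' For (a), the point you must supply is that the subgroup graph has to be \emph{folded} first: one repeatedly identifies any two vertices joined by a path whose label is trivial in $A\ast B$, obtaining a graph $\Gamma'$ in which distinct vertices are never connected by a trivially-labelled path. This is what makes the target vertex $v'$ for each syllable $w_k$ \emph{unique} (two candidates $v'$, $v''$ would be joined by a path labelled $w_k^{-1}w_k=1$), so the tracing never branches and cannot blow up exponentially; without this your ``finitely many queries per syllable'' could multiply across syllables. Also, the correct per-syllable query is not ``does $w_k$ lie in the vertex group'' but ``does $w_k u$ lie in $A(v)$,'' where $u$ is the label of a fixed path from the candidate vertex $v'$ back to $v$ -- this is how one detects a move to a \emph{different} vertex. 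For (b), ``compare with the normal forms of the finitely many generators of $H$'' is not a membership test: it only detects $w$ equal to a generator (already for $H=\langle h\rangle$ infinite cyclic it misses $h^2$). The correct argument is that the trivial-intersection hypothesis lets one prune every monochromatic simple circuit from $\Gamma'$, so all $A$- and $B$-components become trees; then the path realizing a given syllable, if it exists, is unique, and the per-syllable query reduces to a word-problem instance (``is $w_k$ equal to the label of this tree path?''), whence the complexity is that of the word problem in $G$.
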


First we make some preliminary remarks. We are going to use the subgroup graph of a given $H$, similar to the Stallings graph (see e.g. \cite{KWM}), as follows.

Every element $w$ in $G=A \ast B$ has a normal form

\begin{equation}\label{form2}
(g_1) h_1 \cdots g_r (h_r),
\end{equation}

\noindent where $g_i$ and $h_i$ represent nontrivial elements of $A$ and $B$, respectively, and the factors in parentheses may or may not be present. Based on this form, one can build a simple directed circuit with edges labeled by syllables of the normal form (\ref{form2}). Thus, we are going to have ``$A$-edges" and ``$B$-edges", and when traversing this circuit starting at a fixed vertex $V_0$, we are going to read the normal form of $w$. An edge labeled $s$ traversed in the opposite direction would have label $s^{-1}$.

Let us build circuits like this for every generator of a given subgroup $H$, with a common initial
vertex $V_0$. Denote the obtained graph by $\Gamma_1$. Thus, labels of circuits with the initial
vertex $V_0$ in $\Gamma_1$ represent elements of $H$, and every element of $H$ is the label of a
circuit like that.

Now suppose that the graph $\Gamma_1$ has two different vertices connected by a path $\pi$ whose label is an element equal to 1 in $G=A \ast B$. 
Then identify these two vertices. The obtained graph $\Gamma_2$ (with fewer vertices) represents elements of $H$ as well as $\Gamma_1$ does.

Indeed, suppose an element $g$ of $G$ is represented by a path $\mu$ in the graph $\Gamma_2$.
Then the same $g$ can be read off a combination of paths $\mu_1, \mu_2, \ldots$ in the graph $\Gamma_1$, where $\mu_1$ ends at one of the aforementioned vertices, $\mu_2$ starts at the other one, etc. Then in $\Gamma_2$, one can insert (several times) the path $\pi$ and get the path
$\mu_1 \pi^{\pm 1} \mu_2 \pi^{\pm 1} \ldots$ in $\Gamma_1$, and this path will represent the same element $g \in G$.

Continuing with gluing vertices that way, we will eventually get a graph $\Gamma'$ where any path connecting different vertices represents a nontrivial element of $G$.

In the case where the intersection of $H$ with any subgroup of $G$ conjugate to $A$ or $B$ is trivial, one can simplify the graph even further. Suppose there is a simple circuit $\pi = e_1 \ldots e_k, ~k \ge 1,$ in the graph $\Gamma'$, so that either all $e_i$ are $A$-edges or all $e_i$ are $B$-edges. Then the element equal to the label $Lab(\pi)$ should be trivial in $A$ since otherwise the element equal to $Lab(\mu)Lab(\pi)Lab(\mu)^{-1}$ would be a non-trivial element of $H$. Here $\mu$ is the path that connects the vertex $V_0$ with the initial vertex of the circuit  $\pi$. Since $\pi$ is simple, every edge in $\pi$, including $e_k^{\pm 1}$, occurs in $\pi$ just once. In particular, the edge $e_k$ is not among $e_1, \ldots, e_{k-1}$, and therefore $e_k$ can be removed from $\Gamma'$ and in any path that contained $e_k$, the path $e_{k-1}^{-1}\ldots e_1^{-1}$ should be used instead.

Thus, in this case one can get rid of all simple circuits, and therefore of all circuits, all of  whose edges are $A$-edges or all of whose edges are $B$-edges. Denote the obtained graph by $\overline{\Gamma}$.

\medskip

\noindent {\bf Proof of Theorem \ref{membershipproblem}.}
Let us call a maximal connected subgraph $K$ of $\overline{\Gamma}$ or $\Gamma'$ an $A$-component (or a $B$-component) if all edges of $K$ are $A$-edges (respectively, $B$-edges).

If in an $A$-component $K$ one fixes a vertex $v$, then the pair $(K, v)$ defines a subgroup
$A(v) \le A$ that consists of elements equal (in $A$) to labels of circuits (in $K$) with the initial vertex $v$. This subgroup is finitely generated because the graph $K$ is finite.

Summing up, an element of $G$ represented by a word $w$ belongs to the subgroup $H$ if and only if
$w$ is equal (in $G$) to the label of some circuit in $\Gamma'$, with $V_0$ as both the initial and terminal point.

Let $w = w_1\ldots w_k$ be the form (\ref{form}) of the word $w$, obtained by applying the algorithm
$\mathcal A$ from Section \ref{freeproducts}. Then, if $w$ is the label of some circuit $\mathcal C$ in $\overline{\Gamma}$, every syllable $w_i$ of $w$ should be read off either an $A$-subpath or a $B$-subpath of $\mathcal C$. This is because different vertices cannot be connected by a label representing the trivial element of $G=A \ast B$.

By way of induction on $k$ (the syllable length of $w$), suppose that, starting at the vertex $V_0$ and ending at a vertex $v$, we have already read (in the graph $\Gamma'$) an element that is equal to $w_1\ldots w_{k-1}$ in the group $G$.

Suppose that $w_k$ is an $A$-word. To read $w_k$ now, we need to find, in the $A$-component $K$ of the vertex $v$, another vertex $v'$ such that $v$ and $v'$ can be connected by a path $\pi$ (in $K$) whose label is equal to $w_k$ in the group $A$. We note, in passing, that if such a vertex $v'$ exists, then it is unique since otherwise, two such vertices $v'$ and $v''$ would be connected by
a path with the label $w_k^{-1}w_k = 1$.

Since the graph $K$ is connected, there is {\it some} path $\mu$ connecting $v$ and $v'$.
The path $\pi$ (that we are looking for), when combined with $\mu$, yields a circuit $\pi \mu^{-1}$. Therefore, the existence of $\pi$ that we are looking for is equivalent to the following condition:
the word $w_k u$, where $u$ is the label of the path $\mu^{-1}$, represents an element of the subgroup $A(v)$ of the group $A$. This takes us to the subgroup membership problem in the group $A$; this problem is solvable in time at most $f(|w_k|+c)$, where the function $f=O(n^d)$ bounds the time needed for deciding membership in the subgroup $A(v)$ (by one of the hypotheses of Theorem \ref{membershipproblem}), and the constant $c$ bounds the lengths of paths between $v$ and $v'$.

If the word $w_k u$ does not represent an element of $A(v)$, we move on to another vertex $v''$ in the graph $K$. To complete our proof by induction on $k$, note that the step from the word
$w_1\ldots w_{k-1}$ to the word $w_1\ldots w_{k-1} w_k$ is possible if and only if we eventually find a suitable vertex in $K$, so we need to check all vertices in the same $A$-component of the graph $K$. If not, then we already can say that $w$ does not belong to the subgroup $H$.

If we do get to $w_k$, we still need the last vertex to be $V_0$. Since the vertex $v'$, as well as subsequent vertices (if any), are unique, the process of searching for paths $\pi$ does not branch out and thus does not lead to exponential complexity. Since we already know from the proof of Theorem \ref{free product} that reducing an input word $w$ to the form (\ref{form}) takes time
$O(n^{d+1})$, and then the above procedure takes time $O(n^{d})$, the whole algorithm for deciding membership in $H$ takes time $O(n^{d+1})$, and this completes the proof of part (a).

For part (b), note that all $A$-components and $B$-components of the graph $\overline{\Gamma}$ constructed above are trees. Therefore, the membership problem in $H$ in that case is reduced to the word problem in $A$ or $B$, and this completes the proof of part (b).

For part (c), recall that the average-case complexity of reducing an input word $w$ to the form (\ref{form}) is linear in $n=|w|$, by Theorem \ref{free product}. From the proof of Theorem \ref{free product}, one can use the quantitative estimates of the distribution of all
words $w$ between different sets $M(n, k, l)$. What is left now is to replace the upper bound on the time complexity of the word problem for a simple word $u_i$ of length $l$ (in the proof of Theorem \ref{free product}) by the time complexity of deciding the membership of a syllable $w_i$ of length $l$ in a subgroup $A(v)$ or $B(v)$. Since both estimates are $O(n^{d})$, the final result is the same: the average-case time complexity of the membership problem in the group $G=A \ast B$ is linear.

$\Box$

\section{The identity problem}\label{identity}

In this section, we consider an algorithmic problem that is, on the surface, very similar to the word problem, and yet it is different.

The {\it identity problem} is: given a variety $\mathcal{G}$ of groups and a group word $w=w(x_1, \ldots, x_m)$, find out whether or not $w(g_1, \ldots, g_m)=1$ for any group $G \in \mathcal{G}$ and any $g_1, \ldots, g_m \in G$.

This problem is equivalent to the word problem in the free group {\it of infinite rank} in the variety $\mathcal{G}$. Note that a word $w(x_1,\ldots , x_n)$ is an identity in the variety $\mathcal{G}$ if and only if it is equal to 1 in the free group $F_n=F(x_1,\ldots , x_n)$ of this variety, see \cite[13.21]{Neumann}. However, here we consider arbitrary words $w(x_1,\ldots , x_m)$ without a bound on the number $m$ of variables, and therefore we are trying to solve the word problem in the free group $F(x_1,\ldots )$ of infinite rank in the variety $\mathcal{G}$.

Therefore, results of our Section \ref{factor} are not immediately applicable here. Also, we note that while the worst-case time complexity of the word problem in a finite group $G$ is always linear (one can ``evaluate" a given word $w=w(x_1, \ldots, x_m)$ step by step using the Cayley multiplication table for $G$), this is not the case with the identity problem. Indeed, a straightforward algorithm for checking if $w$ is an identity on $G$ would be verifying that $w(g_1, \ldots, g_m)=1$ for any $m$-tuple $g_1, \ldots, g_m$ of elements of $G$. This procedure is exponential in $m$.

We also note that there are examples of varieties given by just a single identity where the identity problem is not algorithmically solvable, see \cite{Kleiman}.

Furthermore, there is one interesting nuance here: if the rank of a group is infinite, then there is no bound on the number of different generators that occur in the input word $w$. For example, if
$w$ has length $n$, then $w$ can be a product of $n$ different generators $x_1, \ldots, x_n$. Then the (increasing) indexes of the generators $x_i$ should be considered part of the input, and in particular, should be taken into account when assessing complexity of the input.

More specifically, a word $w$ of length $n$ has complexity $O(n\log n)$ because, say, the index $n$ of the generator $x_n$ requires storage of size $O(\log n)$. Therefore, when we say that the input word $w$ has complexity $n$, this actually means that $w$ has length $O(\frac{n}{\log n})$. We note though that ``polynomial time in $n$" is equivalent to ``polynomial time in $\frac{n}{\log n}$". With that in mind, we have:

\begin{theorem}\label{nilvariety} {\bf (a)} In any variety $\mathcal{N}$ of nilpotent groups the worst-case time complexity of the identity problem is polynomial.

\medskip

\noindent {\bf (b)} If in two varieties $\mathcal{B}$ and $\mathcal{C}$ the worst-case time complexity of the identity problem is polynomial, then in the product variety $\mathcal{BC}$ it is polynomial, too. In particular, in any polynilpotent variety the worst-case time complexity of the identity problem is polynomial. (Recall that the product variety $\mathcal{BC}$ consists of all groups that are extensions of a group from $\mathcal{B}$ by a group from $\mathcal{C}$.)
%

\end{theorem}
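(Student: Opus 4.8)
\emph{Proof plan.} In the complexity convention of this section an input of complexity $n$ is a word of length $O(n/\log n)$ (the index of a variable $x_i$ costing $O(\log i)$ bits), and ``polynomial in $n$'' is insensitive to this rescaling; moreover, as recalled above, $w=w(x_1,\dots,x_m)$ is an identity of a variety $\mathcal V$ iff $w=1$ in the relatively free group $F_m(\mathcal V)$, where $m$ is at most the length of $w$. Thus both parts amount to solving a word problem in a suitable relatively free group of rank $m$, uniformly in $m$ and in time polynomial in $n\ (\ge m)$. \textbf{Part (a).} Since $\mathcal N$ consists of nilpotent groups, the law $[x_1,\dots,x_{c+1}]=1$ holds in $\mathcal N$ for some fixed $c$, so every $F_m(\mathcal N)$ is nilpotent of class $\le c$. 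I would first dispose of the full variety $\mathcal N_c$ of class $\le c$: the free nilpotent group $F_m(\mathcal N_c)$ is torsion-free and admits a faithful representation by unitriangular matrices over $\Z$ of size $d=d(m)=O(m^{c})$ (for instance, via the truncated Magnus embedding $x_i\mapsto 1+X_i$ into the algebra of noncommutative polynomials in $m$ variables of degree $\le c$), with the matrices of $x_1,\dots,x_m$ writable in time polynomial in $m$. Given $w$, one forms $w(g_1,\dots,g_m)$ by multiplying the $n$ corresponding matrices; by Lemma \ref{unitriangular} every entry arising in the process has bit length $O(\log n)$, so each of the $n$ matrix products costs $O(d^{3}\log^{2}n)=O(n^{3c}\log^{2}n)$ and the whole computation is polynomial in $n$; then one checks whether the outcome is the identity matrix. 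For a proper subvariety $\mathcal N\subseteq\mathcal N_c$, one runs the same matrix computation inside $F_m(\mathcal N_c)$ but reads off the normal form of $w$ in the Hall basis of basic commutators of weight $\le c$ in $x_1,\dots,x_m$ (there are $O(m^{c})$ of them, with exponents bounded by a polynomial in $n$ by the same growth estimate), and then $w=1$ in $F_m(\mathcal N)=F_m(\mathcal N_c)/R$, where $R$ is the verbal subgroup of the laws of $\mathcal N$; since $R$ respects the lower central series, this reduces, weight by weight $i\le c$, to deciding membership of an explicit integer vector in a subgroup $R_i^{(m)}\le\Z^{N_i}$ with $N_i=O(m^{c})$, a Hermite/Smith normal form computation that is polynomial once a generating set of $R_i^{(m)}$ is available.

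The main obstacle in (a) is exactly this last point: producing, for the \emph{fixed} variety $\mathcal N$ and uniformly in $m$, a polynomial-size generating set of each $R_i^{(m)}$ in time polynomial in $m$. Here one must invoke the structure of varieties of nilpotent groups: $R_i^{(m)}$ is the $\mathrm{GL}_m(\Z)$-submodule of $\Z^{N_i}$ generated by the weight-$i$ components of the substitution instances of the laws of $\mathcal N$, and for a fixed nilpotent variety these can be obtained from finitely many laws of bounded length (after linearization one may take multilinear laws in at most $c$ variables), so that $O(m^{c})$ generators of $R_i^{(m)}$, each a Hall collection of a fixed word, suffice; equivalently, the lower-central layers of $F_m(\mathcal N)$ are polynomial functors of $\Z^m$, uniformly computable. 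Note that for the application to polynilpotent varieties one only needs $\mathcal N=\mathcal N_c$, in which case $R=1$ and this obstacle disappears.

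\textbf{Part (b).} Let $F=F_m$ be the absolutely free group of rank $m$ and $U=V_{\mathcal C}(F)$ the verbal subgroup of the laws of $\mathcal C$, so that $F/U=F_m(\mathcal C)$, that $F_m(\mathcal B\mathcal C)=F/V_{\mathcal B}(U)$, and that $U/V_{\mathcal B}(U)$ is the relatively free group of $\mathcal B$ freely generated by any free basis $Y$ of the free group $U$. On input $w$ of length $n$ in $x_1,\dots,x_m$ I would proceed in three steps. First, test whether $w=1$ in $F_m(\mathcal C)$, i.e.\ whether $w$ is an identity of $\mathcal C$; this is polynomial by hypothesis, and if $w\ne 1$ then $w$ is not an identity of $\mathcal B\mathcal C$ (because $\mathcal C\subseteq\mathcal B\mathcal C$) and we stop. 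Second, if $w\in U$, rewrite $w$ by Reidemeister--Schreier relative to a prefix-closed transversal $T$ of $U$ in $F$: the rewriting of $w=x_{i_1}^{\varepsilon_1}\cdots x_{i_n}^{\varepsilon_n}$ is the product $\widetilde w=\prod_{j=1}^{n}\gamma\big(u_{j-1},x_{i_j}^{\varepsilon_j}\big)$ of Schreier generators $\gamma(t,x)=tx\,\overline{tx}^{\,-1}$, where $u_j=\overline{x_{i_1}^{\varepsilon_1}\cdots x_{i_j}^{\varepsilon_j}}\in T$; it telescopes to $w$ and mentions at most $n$ distinct free generators of $U$. Third, relabel these generators as fresh variables and test whether $\widetilde w$ is an identity of $\mathcal B$, which is polynomial by hypothesis; since $U/V_{\mathcal B}(U)$ is free in $\mathcal B$ on $Y$, this holds iff $w\in V_{\mathcal B}(U)$, i.e.\ iff $w$ is an identity of $\mathcal B\mathcal C$. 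The ``in particular'' statement then follows by induction on the number of factors, applying part (a) to each nilpotent factor $\mathcal N_{c_i}$.

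The main obstacle in (b) is performing the Reidemeister--Schreier rewriting in polynomial time, that is, computing the coset representatives $u_0,u_1,\dots,u_n$ along $w$ efficiently. The key observation is that $Uv=Uv'$ in $F$ is equivalent to $vv'^{-1}$ being an identity of $\mathcal C$, hence decidable in polynomial time by the hypothesis on $\mathcal C$; using this oracle one maintains the representatives incrementally (each step appends one letter or causes a bounded cancellation, so $|u_j|\le n$) and detects coset collisions among the prefixes of $w$, thereby building, with only polynomially many oracle calls on words of length $O(n)$ in $O(n)$ variables, the portion of a genuine Schreier (prefix-closed) transversal that is actually needed. Verifying that this bookkeeping is consistent with a bona fide prefix-closed transversal --- so that the $\gamma(t,x)$ really are free generators of $U$ and the final ``relabel as variables'' step is legitimate --- is the one delicate point.
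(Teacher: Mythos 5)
Your part (b) is essentially the paper's own argument: test $w$ in the free group of $\mathcal{C}$, then Reidemeister--Schreier rewrite $w$ in a free basis of the verbal subgroup $U$, maintaining the needed fragment of a prefix-closed Schreier transversal on the fly with the polynomial-time word problem in $F/U$ as a coset-equality oracle, and finally test the rewritten word (of length $\le n$, in at most $n$ fresh generators) as an identity of $\mathcal{B}$. The ``delicate point'' you flag is handled exactly as you suspect: one keeps the invariants that every prefix of a selected representative is selected and that prefixes in the same coset receive the same representative, which guarantees the selected words form part of a genuine Schreier transversal, so the nontrivial $\gamma(t,x)$ are part of a free basis of $U$ and the relabelling is legitimate. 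No issue there.

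Part (a) is where you diverge from the paper, and your route has a genuine gap that your own ``main obstacle'' paragraph does not close. For the full variety $\mathcal{N}_c$ your matrix argument is fine (the truncated Magnus embedding is faithful on $F/\gamma_{c+1}(F)$ by Magnus's theorem $D_{c+1}(F)=\gamma_{c+1}(F)$ for free groups, and the block structure by degree keeps entries of bit length $O(\log n)$). But for a proper subvariety $\mathcal{N}\subsetneq\mathcal{N}_c$ your proposed fix --- that ``after linearization one may take multilinear laws in at most $c$ variables'' generating each layer $R_i^{(m)}$ --- is false over $\Z$: typical proper nilpotent subvarieties are cut out by torsion laws such as $x^p=1$ or $[x,y]^p=1$, which are not consequences of any set of multilinear laws (multilinearization only recovers the law up to the torsion it is meant to impose; it works over $\Q$, not over $\Z$). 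So the uniform, polynomial-size generating set of $R_i^{(m)}$ is not justified, and part (a) as stated (for an arbitrary nilpotent variety) is not proved. The paper avoids this entirely by invoking Neumann's result \cite[Corollary 35.12]{Neumann}: for a nilpotent variety $\mathcal{N}$ of class $c$, the relatively free group $F_m(\mathcal{N})$ embeds, explicitly and algorithmically, into a direct product of $\binom{m}{c}$ copies of the \emph{fixed} finitely generated nilpotent group $F_c(\mathcal{N})$; one then just solves the word problem polynomially many times in that fixed group, where it is quasilinear by the earlier sections. I would recommend either citing that embedding or restricting your part (a) to $\mathcal{N}_c$ (which, as you correctly note, is all that the polynilpotent corollary requires).
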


\begin{proof}
For part (a), let $w$ be an input word with $n$ letters. In particular, $w$ involves $\le n$ generators.
Thus, we want to find out whether or not $w$ is equal to 1 in the group $G_n=F_n(\mathcal{N})$, the free group of rank $n$ in the variety $\mathcal{N}$.

Let $c$ be the nilpotency class of the variety $\mathcal{N}$. By \cite[Corollary 35.12]{Neumann}, the group $G_n$ can be embedded in the direct product of
$\binom{n}{c}$ copies of the group $G_c$. In the proof of \cite[Corollary 35.12]{Neumann}, the embedding is constructed algorithmically, for each factor of the direct product. Therefore, one can explicitly determine all ``coordinates" of the word $w$ in this direct product. This can be done in time polynomial in the length of $w$; this follows from the procedure described in the proof of \cite[Theorem 35.11]{Neumann}.

Thus, we have to solve the word problem in the group $G_c$ $\binom{n}{c}$ times. Since the word problem in any nilpotent group is solvable in quasilinear time (see our Proposition \ref{nilpotent}), the whole procedure takes polynomial time in $n$, and therefore also polynomial time in $O(n\log n)$, the complexity of $w$.

For part (b), we make one remark first. The free group of the product $\mathcal{B} \mathcal{C}$ of two varieties can be obtained as follows. Let $F$ be a (absolutely) free group (i.e., free in the variety of all groups) and let $K$ be a verbal subgroup of $F$ such that the factor group $F$ is free in the variety $\mathcal{C}$. Since $K$ itself is a free group, there is a factor group $K/V(K)$ that is free in the variety $\mathcal{B}$, for an appropriate verbal subgroup $V(K)$. Then the group $F/V(K)$ is free in the variety $\mathcal{B} \mathcal{C}$ by \cite[21.12]{Neumann}. Therefore, it is sufficient for our purposes to address the following general goal: to get a polynomial upper bound for the time complexity of the identity problem in groups of the form $F/V(K)$ where $K$ is a normal subgroup of $F$ such that the word problem in the group $F/K$ can be solved in polynomial time.

Thus, let $F$ be a (absolutely) free group with generators $x_1, x_2,\ldots$, and $K$ a normal subgroup of $F$ such that the word problem in the group $F/K$ is solvable in polynomial time in the length of an input word $w$. Thus, if $w \notin K$, this can be detected in polynomial time.

Now suppose that $w \in K$. We claim that for any such word $w=w(x_1,\ldots, x_m)$ of length $n \ge m$ one can rewrite $w$ in terms of free generators of $K$, in polynomial time in $n$, and the obtained word will have length $\le n$ in these generators.

Indeed, going left to right along the word $w$, one can apply Schreier's rewriting process, while at the same time computing Schreier representatives for cosets in $F/K$. Visually, Schreier representatives are selected by using a maximal tree in the Cayley graph of the group $F/K$. This tree does not have to be geodesic, i.e., the path connecting a vertex to the root in this tree  does not have to be the shortest connecting path in the whole Cayley graph. However, polynomial time complexity of the tree constructing process is due to the fact that part of the tree is being constructed simultaneously with applying Schreier's rewriting process to $w$, and it is sufficient to find just a finite subset of some free generating set of $K$ such that $w$ belongs to the subgroup generated by this finite subset.

In more detail, a step $\#(i+1)$ of the algorithm is as follows. Suppose that, going left to right along the word $w$, we already have prefixes $w_k$ with the following properties:
\medskip

\noindent {\bf 1.} For any prefix $w_k$ of length $k\le i <n$ of the word $w$, a Schreier representative $\bar w_k$ of length $\le k$ of its $K$-coset has already been selected.

\noindent {\bf 2.} For any two prefixes that are in the same $K$-coset, the same Schreier representative has been selected.

\noindent {\bf 3.} Any prefix of any selected Schreier representative has been selected, too.
\medskip

Let now $w_{i+1}$ be the prefix of length $(i+1)$ of the word $w$, i.e., $w_{i+1}=w_i a$ for some letter $a$. Then we check whether $w_{i+1}$ belongs to a $K$-coset of one of the already selected Schreier representatives. (This can be checked in polynomial time, by the hypotheses of part (b) of the theorem.) If it does, then (by definition) $\bar w_{i+1}$ is already on our list of
selected Schreier representatives. If it does not, then we let $\bar w_{i+1} = (\bar w_i) a$ and add this $\bar w_{i+1}$ to the list of selected Schreier representatives.



Now we make a list of free generators (and their inverses) of the group $K$, where we include all nontrivial words of the form $(\bar w_i) a (\bar w_{i+1})^{-1}$, for all prefixes $w_{i+1}$ of $w$ of length $(i+1)$, ~$i=0,1, \ldots, n-1$. Then $w = \prod_i (\bar w_i) a (\bar w_{i+1})^{-1}$ in $F$. This product can be rewritten in terms of free generators $y_1, y_2, \ldots$ of the group $K$ if one discards trivial factors from this product and denotes equal (or reciprocal) factors by the same letter $y_j^{\pm 1}$.

The length of the word $w'=w'(y_1,\ldots, y_p)$ obtained after rewriting $w$ this way in free generators $y_1, y_2, \ldots$ of the subgroup $K$ is still $\le n$, although the generators $y_1,\ldots, y_p$ that occur in $w'$ after rewriting can be different from the original $x_1,\ldots, x_m$ and the number of generators can increase.

To complete the proof of part (b), we take $K$ to be the verbal subgroup of $F$ that corresponds to the variety $\mathcal{C}$, i.e., the group $F/K$ is free in the variety $\mathcal{C}$, and therefore the identity problem in $F/K$ is solvable in polynomial time. Furthermore, the free group in the product variety $\mathcal{BC}$ is $F/V(K)$, where $V(K)$ is the verbal subgroup of $K$ that corresponds to the variety $\mathcal{B}$ (see e.g. \cite[Proposition 21.12]{Neumann}). In the free group $K/V(K)$ of the variety $\mathcal{B}$ the identity problem is solvable in polynomial time.


Thus, to find out if $w$ is an identity in $\mathcal{BC}$, we first check if $w$ is equal to 1 in $F/K$, the free group of the variety $\mathcal{C}$. If it is, then we rewrite $w$ in the generators of $K/V(K)$, the free group of the variety $\mathcal{B}$, by using the algorithm described above, and then check if $w=1$ in that free group. All steps can be done in time polynomial in the length of $w$. This completes the proof of part (b).


\end{proof}

Before we formulate our result on the average-case complexity of the identity problem, we need to somehow define a {\it  density} (this is what we denote by $\frac{|V_n^{(j)}|}{|W_n|}$ in formula (\ref{avcase3})) in the set of all (finite) group words in a countable alphabet $x_1, x_2, \ldots $. Below we describe a natural way to do that, where different subwords of the same length occur in long words with approximately equal probabilities. Alternative ways (where the latter property may not hold) were discussed in \cite{BMS}.

Every letter $x_i$ will be encoded in a 4-letter alphabet $\{x, y, 0, 1\}$ as follows. The letters $x_1$ and  $x_1^{-1}$ will be encoded as $x$ and $y$, respectively. These are 2 words of length 1. Then, the letters $x_2^{\pm 1}$ and  $x_3^{\pm 1}$ will be encoded as $x0, y0$ and $x1, y1$, respectively. These are 4 words of length 2. Then, for $x_4^{\pm 1}$ through $x_7^{\pm 1}$ we have 8 words of length 3: $x00, y00, \ldots, x11, y11$, and so on.

Then each of the $\frac{1}{2}4^n$ possible words of length $n$ in the alphabet  $\{x,y,0,1\}$  that start with $x$ or $y$ encodes a unique word of length $\le n$ in the original countable alphabet $X =\{x_1, x_2, \ldots \}$. Denote by $W_n$ the set of all such words of length $n$ in the alphabet $X$ (i.e., words that admit an encoding of length $\le n$ in the alphabet $\{x, y, 0, 1\}$) and say that each word in $W_n$ has complexity $n$. By the {\it density} of a set $V$ of words in the alphabet $X$ we will mean the ratio $\frac{|V_n|}{|W_n|} = 2^{1-2n}\cdot |V_n|$, where $V_n=V \cap W_n$. 
We note that $|V_n| = |U_n|$, where $U_n$ is the set of words of length $n$ in the 4-letter alphabet above that consists of encodings of the words in $V_n$.

This definition of density allows us to use formula (\ref{avcase3}) to get a linear upper bound on the average-case time complexity of the identity problem in some varieties:

\begin{theorem}\label{Theorem 7}
Suppose in a variety $\mathcal{M}$ of groups the worst-case time complexity of the identity problem is $O(\exp(n^\theta))$, where $\theta < \frac{1}{2}$. Then the average-case time complexity of the identity problem in $\mathcal{M}$ is linear.
\end{theorem}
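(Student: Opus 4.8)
The plan is to mimic the stratification strategy embodied in formula~(\ref{avcase3}), splitting the set $W_n$ of inputs of complexity $n$ into a ``bad'' set of small density on which we use the worst-case bound, and a ``good'' set on which a short word can be evaluated quickly. The key observation is that an input word of complexity $n$ encodes a word $w$ in the countable alphabet $X$ of length $\ell \le n$ in the original generators, and the honest algorithm for the identity problem runs in time $O(\exp(\ell^\theta))$ by hypothesis. Since $\exp(\ell^\theta) \le \exp(n^\theta)$, every input contributes at most $O(\exp(n^\theta))$ to the sum, so it suffices to show that all but a sub-polynomially-dense set of inputs have much shorter associated words in $X$, allowing a faster honest pass.

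First I would quantify how the length $\ell$ of the word $w \in X$ is related to the complexity $n$ of its encoding. In the encoding scheme, a letter $x_i^{\pm 1}$ with index in the range $[2^{j-1}, 2^j - 1]$ costs $j$ symbols of the $4$-letter alphabet $\{x,y,0,1\}$. Thus a ``typical'' encoded string of length $n$ — and there are $\tfrac12 4^n$ of them, matching $|W_n|$ — breaks up into roughly $n/2$ blocks of average cost $2$, so the associated word $w$ in $X$ has length $\ell = \Theta(n)$ but, more importantly, the key parameter is not $\ell$ itself but the range of generator indices: most blocks have bounded length, hence bounded index. Concretely, I would let $V_n^{(\text{bad})}$ be the set of encodings of length $n$ in which some block (encoding of a single $x_i^{\pm 1}$) has length $\ge B_n$ for a slowly growing threshold $B_n$, say $B_n = \lceil (\log n)^{2} \rceil$ or even $B_n = \lceil n^{\theta'} \rceil$ for some $\theta < \theta' < \tfrac12$. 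A block of length $\ge B_n$ occurs at a fixed position in at most a fraction $O(2^{-B_n})$ of all strings (a block is delimited by a letter from $\{x,y\}$ followed by $\ge B_n - 1$ symbols from $\{0,1\}$), and there are at most $n$ positions, so the density of $V_n^{(\text{bad})}$ is $O(n \cdot 2^{-B_n})$, which is sub-polynomial in $\tfrac1n$ — in fact it beats any polynomial decay. On $V_n^{(\text{bad})}$ we run the honest worst-case algorithm, contributing at most $O(n \cdot 2^{-B_n}) \cdot O(\exp(n^\theta)) = o(1)$ to the average, because $2^{-B_n}\exp(n^\theta) \to 0$ once $B_n$ grows faster than $n^\theta/\log 2$.

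Next I would handle the good set $V_n^{(\text{good})} = W_n \setminus V_n^{(\text{bad})}$. For a word $w$ whose encoding lies in $V_n^{(\text{good})}$, every generator occurring in $w$ has index bounded by $2^{B_n}$, so $w$ involves at most $2^{B_n}$ distinct generators and has length $\ell \le n$. Hence $w$ can be treated as a word on at most $2^{B_n}$ variables, and by the hypothesis its identity status is decidable in time $O(\exp(\ell^\theta)) \le O(\exp(n^\theta))$ — but this is still superlinear, so a cruder bound is not enough. The right move is to choose $B_n$ just large enough that $n \cdot 2^{-B_n}\exp(n^\theta) = o(1)$ yet small enough that on the good set the honest algorithm, run on a word with the modest number $2^{B_n}$ of variables and the appropriate reading of indices, costs only $O(n)$; alternatively, one stratifies the good set further by the actual value of $\max_i i$ among generators of $w$ (as the remark after~(\ref{avcase3}) and the proof of Theorem~\ref{Theorem 7}'s expected multi-summand structure suggest) and sums a convergent series of contributions, each $O(n)$ times a geometrically small density. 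Since the reading of the input itself already costs $\Theta(n)$, a linear bound is the target and the sum telescopes to $O(n)$.

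The main obstacle I anticipate is the interplay between the two free parameters: the threshold $B_n$ must simultaneously make $2^{-B_n}\exp(n^\theta)$ decay (forcing $B_n \gg n^\theta$) and keep $\exp((\text{something involving }B_n)^\theta)$ or the per-word honest runtime on the good set linear in $n$ (forcing $B_n$ not too large). This is exactly where $\theta < \tfrac12$ is used: with $B_n \approx n^{\theta'}$, $\theta < \theta' < \tfrac12$, one gets $2^{-B_n}\exp(n^\theta) \to 0$, while the honest algorithm on a word with $\le 2^{n^{\theta'}}$ variables of total length $\le n$ still runs in time $O(\exp((n^{\theta'} \log n + n)^\theta))$ — and checking that this is $o(n)$, or more safely $O(n^{1+o(1)})$ absorbed by a finer stratification, is the delicate calculation. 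I would carry out this calibration carefully, most likely by stratifying $W_n$ into countably many pieces indexed by $t = \lfloor \log_2(\max_i i) \rfloor$, bounding the density of the piece with parameter $t$ by $O(n\, 2^{-t})$ and its honest runtime by $O(\exp((n + 2^t t)^\theta))$, and verifying that $\sum_t n\, 2^{-t}\exp((n + 2^t t)^\theta)$ is $O(n)$ by splitting the sum at $t \approx \theta \log_2 n$.
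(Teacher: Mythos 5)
There is a genuine gap, and it is fatal to the approach: you never exhibit a sub--worst-case algorithm on the high-density stratum. Your stratification is by the maximal complexity (block length) of a letter occurring in the input, but the theorem's hypothesis bounds the honest algorithm's running time only as a function of the input length, not of the number or range of distinct variables. An encoding of length $n$ decodes to a word of length $\ell=\Theta(n)$ in the alphabet $X$ (as you note yourself), regardless of how small the generator indices are, so on your ``good'' set --- which has density close to $1$ --- the only available bound is still $O(\exp(n^\theta))$, which is superlinear; the assertion that the honest algorithm ``costs only $O(n)$'' on words with few distinct variables is unsupported by the hypothesis. Your own closing computation confirms the problem: in the sum $\sum_t n\,2^{-t}\exp((n+2^t t)^\theta)$ the bound $O(n\,2^{-t})$ on the density of the stratum with parameter $t$ is vacuous for $t\le \log_2 n$, the density is in fact concentrated near $t\approx\log_2 n$ where it is bounded below by a constant, and the contribution of that single stratum is already $\exp(\Theta(n^\theta))=\omega(n)$. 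No calibration of the threshold $B_n$ can repair this, because your two requirements ($2^{-B_n}\exp(n^\theta)\to 0$, and linear time on the complement) concern different strata and the second is never established.

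The missing idea is a \emph{linear-time necessary condition} for $w$ to be an identity that fails outside a set of density $\exp(-n^{c})$ with $\theta<c<\tfrac12$; it comes from group theory, not from the encoding. Every nontrivial variety has relatively free groups mapping onto a direct product of copies of $\Z_p$ separating the free generators \cite[13.52]{Neumann}, so an identity must have exponent sum divisible by $p$ on every variable. The paper checks this condition one variable at a time over the $2^{k-1}$ letter pairs of complexity $k=\lfloor c\log_4 n\rfloor$ (each check costs $O(n)$ by Lemma \ref{lemma10}); Lemmas \ref{lemma7} and \ref{lemma9} show that the set of words surviving $r$ consecutive checks has cardinality at most $2^{-r}|W_n-Z_n|$, so the expected cost of the cascade is $O(n)\sum_r 2^{-r}=O(n)$, while the final survivors have density less than $\exp(-n^{\theta})$ and only on them is the worst-case $O(\exp(n^{\theta}))$ algorithm invoked (contributing $O(1)$). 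This is where $\theta<\tfrac12$ enters, since the counting estimates force $c<\tfrac12$. Without some such rapidly-failing linear-time test, a decomposition of $W_n$ by density alone cannot yield a linear average.
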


We start with

\begin{lemma}\label{lemma7}
Let $V$ be the set of words (in a countable alphabet) of complexity $k$ that do not contain a particular letter $x_i$ or $x_i^{-1}$. Then, for any $n \ge k, n\ge 4$, one has  $|V_n| = |U_n| < (4-\frac{4^{-k}}{k})^n.$
\end{lemma}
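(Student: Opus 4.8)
The plan is to enumerate the set $U_n$ of code strings directly, using the block structure of the code. Recall that any string over $\{x,y,0,1\}$ of length $n$ beginning with $x$ or $y$ decomposes uniquely into consecutive \emph{blocks} (a block being one of $x,y$ followed by a possibly empty binary string), and that these blocks are exactly the codewords of the letters of the encoded word. Let $\ell$ be the length of the codeword of $x_i$; since $x_i$ occurs in words of complexity $k$, we have $\ell\le k$. There are $2^j$ blocks of length $j$, of which exactly two are forbidden when $j=\ell$ (the codewords of $x_i$ and $x_i^{-1}$), so $a_n:=|U_n|$ is the number of length-$n$ code strings none of whose blocks is forbidden. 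Splitting off the first block gives the renewal recurrence $a_n=\sum_{j=1}^{n}c_j\,a_{n-j}$ with $a_0=1$, where $c_j=2^j$ for $j\neq\ell$ and $c_\ell=2^\ell-2\ge 0$.

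Next I would locate the exponential growth rate. Put $f(z)=\sum_{j\ge1}c_jz^{j}=\frac{2z}{1-2z}-2z^{\ell}$, which is continuous and increasing from $0$ to $+\infty$ on $[0,\tfrac12)$; let $\rho\in(0,\tfrac12)$ be the point with $f(\rho)=1$ and set $\mu_0=1/\rho$. Then $a_n\le\mu_0^{\,n}$ for every $n\ge0$, by strong induction: $a_0=1=\mu_0^{0}$, and since all $c_j\ge0$,
\[
a_n=\sum_{j=1}^{n}c_j\,a_{n-j}\;\le\;\mu_0^{\,n}\sum_{j=1}^{n}c_j\rho^{\,j}\;\le\;\mu_0^{\,n}\sum_{j\ge1}c_j\rho^{\,j}\;=\;\mu_0^{\,n}f(\rho)\;=\;\mu_0^{\,n}.
\]

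It then remains to check $\mu_0<\lambda$, where $\lambda:=4-\delta$ and $\delta:=4^{-k}/k$. As $f$ is increasing, this is equivalent to $f(1/\lambda)<1$, i.e.\ to $\frac{2}{\lambda-2}-2\lambda^{-\ell}<1$, i.e.\ (using $4-\lambda=\delta$ and $\lambda-2=2-\delta$) to $\frac{\delta}{2-\delta}<2\lambda^{-\ell}$. Since $k\ge1$ forces $\delta\le\tfrac14$, hence $2-\delta\ge\tfrac74$, the left-hand side is at most $\tfrac47\delta=\tfrac{4}{7k}4^{-k}$; since $\ell\le k$ and $\lambda<4$, the right-hand side exceeds $2\cdot4^{-k}$; and $\tfrac{4}{7k}4^{-k}<2\cdot4^{-k}$ holds for all $k\ge1$. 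Therefore $\mu_0<\lambda$, and combining with the induction above, $|V_n|=|U_n|=a_n\le\mu_0^{\,n}<(4-4^{-k}/k)^{n}$ for all $n\ge1$, in particular for $n\ge k$, $n\ge4$.

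The substantive step is the first one: recognizing that the right object to count is the code strings organized by their block decomposition, which turns the statement into a clean renewal recurrence whose growth rate is governed by the equation $\sum_{j\ge1}c_j\rho^j=1$. After that the argument is essentially forced; the only calculation requiring care is the final comparison, where the precise shape $4^{-k}/k$ of the correction term is used, with the binding case $\ell=k$ still leaving a factor of order $k$ to spare (since $4^{-\ell}$ dominates $4^{-k}/k$ whenever $\ell\le k$). The degenerate case $i=1$, where $c_1=0$, is handled by exactly the same computation.
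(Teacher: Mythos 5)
Your proof is correct, but it takes a genuinely different route from the paper's. The paper cuts the length-$n$ code string into $s=\lfloor n/(k+1)\rfloor$ consecutive chunks of length $k+1$ and observes that no chunk can equal $vx$ or $vy$ (where $v$ is the codeword of $x_i^{\pm 1}$), since in such a chunk $v$ would be forced to be a complete block; this gives at most $(4^{k+1}-2)^s\,4^{l}$ strings and, after two Bernoulli-type estimates, the stated bound. You instead count $U_n$ exactly via the block decomposition, obtaining the renewal recurrence $a_n=\sum_j c_j a_{n-j}$, bound $a_n\le\mu_0^n$ where $1/\mu_0$ is the root of $\sum_j c_j z^j=1$, and verify $\mu_0<4-4^{-k}/k$ by a direct computation with the generating function. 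Your route is slightly longer to set up but is exact at the combinatorial level (the only loss occurs in the final comparison of $\mu_0$ with $4-4^{-k}/k$), it dispenses with the side conditions $n\ge k$, $n\ge 4$ (your bound holds for all $n\ge 1$), and it makes transparent why the correction term has order $4^{-k}$: deleting two blocks of length $\ell\le k$ moves the root of the renewal equation by roughly $2\cdot 4^{-\ell}\ge 2\cdot 4^{-k}$, comfortably more than the required $4^{-k}/k$. One small remark: you read the hypothesis as ``the codeword of $x_i$ has length $\ell\le k$,'' whereas in the paper's application $x_i$ has complexity exactly $k$; since your argument covers every $\ell\le k$, this only makes your conclusion (harmlessly) more general.
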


\begin{proof}
Suppose $x_i$ or $x_i^{-1}$ is encoded by a word $v$ in our 4-letter alphabet. Now given an arbitrary word $w \in U_n$, factor it as a product $w_1 \cdots w_s w_{s+1}$, where  $s =\lfloor  \frac{n}{k+1}\rfloor$, the words $w_1, \ldots, w_s$ are of length $k+1$, and let the length of $w_{s+1}$ be $l \le k$.

None of the words $w_1, \ldots, w_s$ can be equal to $vx$ or $vy$. Therefore, the number of possibilities for each $w_i, 1 \le i \le s$, is at most $4^{k+1}-2$, hence the number of possibilities for $w$ is at most $(4^{k+1}-2)^s 4^l \le 4^n \cdot (1 - 2/4^{k+1})^s$.

For the factor on the right we have:

$$(1 - 2/4^{k+1})^s<(1 - 2/4^{k+1})^{n/2k} =
((1 - 2/4^{k+1})^{1/2k})^n <(1- 1/(k \cdot 4^{k+1}))^n,$$

\noindent and this completes the proof of the lemma.  (For the latter inequality we use the fact that $(1-x)^n > 1-nx$ if $x\in (0,1)$ and the integer $n$ is greater than 1.)
\end{proof}

We now continue with the proof of Theorem \ref{Theorem 7}. In reference to Lemma \ref{lemma7}, let
$k \le c \log_4 n$ for some constant $c, ~0 < c < 0.5$. Then

$$\frac{4^{-k}}{k} > 4^{-c\log_4 n} \cdot (c\log_4 n)^{-1} > n^{-(c+1)/3}$$

\noindent for sufficiently large $n$. Therefore,

$$\frac{|V_n|}{|W_n|} < (4-\frac{4^{-k}}{k})^n/(4^n/2) <
2(1- n^{-(c+1)/3})^n < 2\exp(-n^{(2-c)/3}).$$

Now recall that there are $2^{k-1}$ different letter/inverse
letter pairs of complexity $k$. Therefore, with
$k \le c \log_4 n$, for the set $Z_n$ of all words $w$ of complexity $n$ such that there is at least one letter $x_i^{\pm 1}$ of complexity $k$  that does not occur in $w$,  we have

\begin{equation}\label{Zn}
|Z_n| <  2^{k-1} \exp(-n^{(2-c)/3})< 2^{c \log_4 n} \exp(-n^{(2-c)/3})
= n^{c/2} \exp(-n^{(2-c)/3}) < \exp(-n^c).
\end{equation}

\noindent for sufficiently large $n$. (The last inequality follows from the fact that $n^{c/2}\exp(n^c - n^{(2-c)/3})$ converges to 0 when $n \to \infty$  since $c<0.5.$)
Therefore, $\frac{|W_n - Z_n|}{|W_n|} > 1 - \exp(-n^c)$.

For the next lemma, we are going to introduce subsets $X_{n,j}$ of the set $W_n - Z_n$, for all $2^k$ letters of complexity $k$: ~$x_s, x_s^{-1}, x_{s+1},\ldots , x_t^{-1}$, where $s=2^{k-1}, t = 2^k-1$.

We note that the free group of rank $n$ of any nontrivial variety $\mathcal{X}$ admits a homomorphism on the direct product of $n$ cyclic groups of order $p$, for some $p \ge 2$, so that different free generators are mapped to generators of different direct factors, see \cite[13.52]{Neumann}. Therefore, for any identity $w(x_1, \ldots, x_n)=1$ of the variety $\mathcal{X}$ the exponent sum on every $x_i$ that occurs in $w$ should be divisible by $p$. We will first assume that $p \ge 3$ (this is possible unless the variety $\mathcal{X}$ satisfies the identity $x^2=1$). We therefore introduce the following auxiliary subsets, depending on $p$.

The subsets $X_{n,j}$ are defined as follows. $X_{n,0}$ is just $W_n - Z_n$. Then, $X_{n,1}$ consists of all words where a letter $x_s$ occurs as many times $\pmod p$ as the letter $x_s^{-1}$ does, i.e., the sum of the exponents on $x_s$ in such a word is 0 $\pmod  p$. If, in addition, we impose a similar condition on the letter $x_{s+1}$, then so defined words comprise $X_{n,2} \subset X_{n,1}$, etc. Finally, if we impose this condition on {\it all} letters of complexity $k$, we get a subset $X_{n,2^{k-1}}$ that we denote by $Y_n$.

We also define subsets $X'_{n,j}$ of the whole set $W_n$ in a similar way. The difference between $X'_{n,j}$ and $X_{n,j}$ therefore is that words in $X'_{n,j}$ do not have to contain all letters of complexity $k$.

\begin{lemma}\label{lemma9}
For $r = 1,2,\ldots , 2^{k-1}$, one has $|X_{n,r}| \le \frac{|W_n-Z_n|}{2^r}$. In particular,
$|Y_n| \le 2^{-2^{k-1}} \cdot |W_n-Z_n|$. Furthermore, $|X'_{n,r}| \le \frac{|W_n-Z_n|}{2^{r-1}}$ if $k\le c \log_4 n$.
\end{lemma}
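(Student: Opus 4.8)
The plan is to establish the one-step inequality $|X_{n,r}|\le\tfrac12|X_{n,r-1}|$ for every $r\ge1$ and then iterate down to the base case $X_{n,0}=W_n-Z_n$; the bound on $|Y_n|$ is then the instance $r=2^{k-1}$, and the bound on $|X'_{n,r}|$ will follow afterwards by absorbing the small set $Z_n$.

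First I would build a fixed-point-free involution $\phi$ of the set $X_{n,r-1}$ that sends $X_{n,r}$ into its complement $X_{n,r-1}\setminus X_{n,r}$. Since $s=2^{k-1}$ and $1\le r\le 2^{k-1}$, the index $s+r-1$ lies in $\{2^{k-1},\dots,2^{k}-1\}$, so $x_{s+r-1}$ is one of the $2^{k-1}$ letters of complexity $k$; consequently every word of $W_n-Z_n$ — and hence every word of $X_{n,r-1}\subseteq W_n-Z_n$ — contains at least one occurrence of $x_{s+r-1}$ or of $x_{s+r-1}^{-1}$. For $w\in X_{n,r-1}$ let $\phi(w)$ be the word obtained by flipping the sign of the \emph{last} occurrence of $x_{s+r-1}^{\pm1}$ in $w$. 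Since $x_{s+r-1}$ and $x_{s+r-1}^{-1}$ have codes $x\gamma$ and $y\gamma$ of the same length, this does not change the complexity, so $\phi(w)\in W_n$; the set of letters occurring (ignoring signs) is unchanged, so $\phi(w)$ still contains every letter of complexity $k$ and therefore lies in $W_n-Z_n$; and the exponent sums on $x_s,\dots,x_{s+r-2}$ are untouched, so in fact $\phi(w)\in X_{n,r-1}$. The map $\phi$ is clearly an involution — the positions carrying $x_{s+r-1}^{\pm1}$ are the same in $w$ and $\phi(w)$, so ``last occurrence'' refers to the same position — and it has no fixed point. Finally, if $w\in X_{n,r}$ then the exponent sum on $x_{s+r-1}$ is $\equiv0\pmod p$ in $w$ but $\equiv\pm2\pmod p$ in $\phi(w)$, and $\pm2\not\equiv0\pmod p$ because $p\ge3$; hence $\phi(w)\in X_{n,r-1}\setminus X_{n,r}$. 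Injectivity of $\phi$ then gives $|X_{n,r}|\le|X_{n,r-1}|-|X_{n,r}|$, i.e. $|X_{n,r}|\le\tfrac12|X_{n,r-1}|$, and iterating from $|X_{n,0}|=|W_n-Z_n|$ yields $|X_{n,r}|\le2^{-r}|W_n-Z_n|$, in particular the bound on $|Y_n|$.

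For the last assertion I would note that $X'_{n,r}$ differs from $X_{n,r}$ only by dropping the requirement that all complexity-$k$ letters occur, so $X'_{n,r}=X_{n,r}\sqcup(X'_{n,r}\cap Z_n)$ and hence $|X'_{n,r}|\le|X_{n,r}|+|Z_n|\le2^{-r}|W_n-Z_n|+|Z_n|$. It then suffices to check that $|Z_n|\le2^{-r}|W_n-Z_n|$ when $k\le c\log_4 n$. By (\ref{Zn}) the density of $Z_n$ is at most $\exp(-n^c)$, and since $|W_n-Z_n|>\tfrac12|W_n|$ for large $n$ this gives $|Z_n|\le2\exp(-n^c)|W_n-Z_n|$, whereas $2^{-r}\ge2^{-2^{k-1}}\ge2^{-n^{c/2}/2}$ because $2^{k-1}\le\tfrac12 n^{c/2}$; as $n^{c}$ dominates $n^{c/2}$, the inequality $2\exp(-n^c)\le2^{-r}$ holds for $n$ large, and combining the two displays gives $|X'_{n,r}|\le2\cdot2^{-r}|W_n-Z_n|=2^{-(r-1)}|W_n-Z_n|$.

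The main obstacle is the bookkeeping in the middle paragraph: one must verify simultaneously that the sign-flip preserves complexity, preserves the property of containing all complexity-$k$ letters, and leaves every earlier exponent-sum condition intact — and, most importantly, that flipping one sign changes the relevant exponent sum by exactly $\pm2$, which is nonzero modulo $p$ precisely because the hypothesis $p\ge3$ has been arranged (for $p=2$ the argument collapses, which is why that case is treated separately). The passage to $X'_{n,r}$ is then just the routine observation that $|Z_n|$ is negligibly small next to $2^{-r}|W_n-Z_n|$.
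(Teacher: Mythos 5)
Your proposal is correct and follows essentially the same route as the paper: a sign-flip of a single designated occurrence of $x_{s+r-1}^{\pm1}$ gives an injection (in the paper, phrased as a recoverable operation on the leftmost occurrence rather than an involution on the last one) from $X_{n,r}$ into $X_{n,r-1}\setminus X_{n,r}$, using $p\ge 3$ exactly as you note, and the passage to $X'_{n,r}$ is reduced in both cases to $|Z_n|\le 2^{-2^{k-1}}|W_n-Z_n|$ via the estimate (\ref{Zn}). Your write-up is somewhat more explicit about the bookkeeping (preservation of complexity, of membership in $W_n-Z_n$, and of the earlier exponent conditions), but the argument is the same.
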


\begin{proof}
First we note that to obtain the last inequality from the previous one, it is sufficient to show that $|Z_n| \le 2^{-2^{k-1}} \cdot |W_n - Z_n|$, which follows from (\ref{Zn}) if $k\le c \log_4 n$.

Now, keeping in mind that for any $r$, complexity of a letter $x_r$ is the same as that of $x_r^{-1}$, we note that upon replacing in a word $w \in X_{n,1}$ the leftmost occurrence of $x_s^{\pm 1}$ with $x_s^{\mp 1}$, we obtain a word $w' \in (W_n - Z_n)$ that no longer satisfies the condition on the exponents on $x_s$ summing up to 0. Moreover, by applying the same operation to $w'$, one can recover the original $w$. This implies the inequality $|X_{n,1}|\le \frac{|W_n-Z_n|}{2}$.

Then, with a similar operation applied to the letter $x_{s+1}$, a word from $X_{n,2}$ can be transformed to a word from $X_{n,1} - X_{n,2}$. Therefore, $|X_{n,2}|\le \frac{1}{2} |X_{n,1}|$, and the obvious induction completes the proof of the lemma.

\end{proof}

Let now $k=\lfloor c\log_4 n \rfloor$. By Lemma \ref{lemma9}, $\frac{|Y_n|}{|W_n|} \le 2^{-n^c/2}$.
Therefore, for a sufficiently large $n$, the density (in $W_n$) of words of complexity $n$ with the exponent sum on each letter equal to 0 $\pmod p$ is less than $\exp(-n^c) + 2^{-n^c/2}$. This expression is less than $\exp (-n^{\theta})$ if $\theta < c$ and $n$ is large enough.

To complete the proof of the theorem, we need to find a suitable upper bound on the average time it takes to check whether a given word $w$ of complexity $n$ has the exponent sum on each letter of complexity $k$ equal to 0 $\pmod p$. The latter condition means that $w$ belongs to all sets  $X'_{n,1}\supset \ldots \supset X'_{n,2^{k-1}}$. Thus, we start the verification process with the letter $x_s$. We need one more lemma:

\begin{lemma}\label{lemma10}
For any $i$, there is an algorithm that determines the number of occurrences of the letter $x_i$ of complexity $k \le n$ in a given word $w$ of complexity $n$, such that the time complexity of this algorithm is bounded by $C \cdot n$, where a constant $C$ does not depend on $i$.
\end{lemma}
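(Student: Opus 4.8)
The plan is to exploit the self-delimiting structure of the encoding introduced in the paragraph preceding Theorem~\ref{Theorem 7}. Recall that every letter $x_j^{\pm1}$ is coded as one symbol from $\{x,y\}$ (recording the sign) followed by a string over $\{0,1\}$ of length $\lfloor\log_2 j\rfloor$ (recording the index), so the code of the letter $x_i$ of complexity $k$ is $x\beta_i$, where $\beta_i$ is a fixed binary string of length $k-1$ determined by $i$. Since the sign-symbols $x,y$ and the index-symbols $0,1$ come from disjoint alphabets, a word $w\in W_n$, presented as its $4$-letter encoding of length $n$, parses uniquely and greedily into \emph{tokens}: each occurrence of a symbol from $\{x,y\}$ begins a new token, which then runs up to (but not including) the next such symbol, or to the end of the string. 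Reading $w$ left to right, the algorithm will walk through these tokens one at a time.

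First I would carry out an $O(k)$ preprocessing step: from $i$ (given in binary, hence of bit length $O(k)\le O(n)$) write the pattern $\beta_i$ onto a work tape, followed by an end-of-pattern marker. Then the algorithm makes a single left-to-right pass over the input tape. For the current token it checks whether the sign-symbol is $x$ and whether the binary run that follows equals $\beta_i$; this comparison is performed by advancing the input head and the $\beta_i$-head in lockstep and stopping the moment a mismatch is seen or one of the two runs is exhausted. If the token equals $x\beta_i$, a counter is incremented. In any case the algorithm then finishes scanning past the end of the current token's binary run, rewinds the $\beta_i$-head to the start of $\beta_i$, and moves on to the next token; at the end it outputs the counter. (Running the same procedure with $y$ in place of $x$ counts the occurrences of $x_i^{-1}$.)

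For the running time, the input tape is traversed exactly once, which is $O(n)$. The only other cost is manipulating the $\beta_i$-tape: for a token of length $m_t$ (one sign symbol and $m_t-1$ index symbols) the comparison advances, and then rewinds, the $\beta_i$-head by at most $\min(m_t-1,\,k-1)+1\le m_t$ positions, so the work charged to that token is $O(m_t)$. Since the tokens partition the length-$n$ string, $\sum_t m_t=n$, and summing gives $O(n)$; adding the $O(k)\le O(n)$ preprocessing, the whole algorithm runs in time $\le C\cdot n$ with $C$ an absolute constant, in particular independent of $i$ and $k$, which is exactly what the lemma asserts.

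The one place that needs care — essentially the only subtlety — is this last bound on the $\beta_i$-tape cost. A naive implementation that rewinds a length-$(k-1)$ pattern tape after \emph{every} token would spend $\Theta(k)$ per token and hence $\Theta(nk)$ in the worst case where $w$ consists of $\Theta(n)$ short tokens; what rescues the estimate (and makes the constant uniform in $i$) is aborting each comparison as soon as the shorter of the token's binary run and $\beta_i$ is exhausted, so that the $\beta_i$-head is never moved by more than the current token's length. Equivalently, one could recast the task as counting the token-aligned occurrences of the fixed string $x\beta_i$ in $w$ and invoke a linear-time string-matching algorithm (e.g.\ Knuth--Morris--Pratt), which runs in time $O(n+k)=O(n)$; the direct greedy-parsing argument above is more elementary and already suffices.
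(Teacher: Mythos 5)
Your proof is correct and follows essentially the same route as the paper's: a two-tape machine with the pattern $x\beta_i$ on a work tape, a single synchronized left-to-right scan of $w$, resetting/rewinding the pattern head on mismatch, with linearity coming from the fact that the total motion of the pattern head is dominated by that of the input head. You are in fact somewhat more explicit than the paper about the two delicate points (charging each rewind to the current token's length, and rejecting tokens that merely have $x\beta_i$ as a proper prefix), but the underlying algorithm and the $O(n)$ accounting are the same.
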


\begin{proof}
An algorithm can be described as follows. We will use a Turing machine with two tapes. A letter $x_i$ of complexity $k$ is written on one of the tapes as a word of length $k$, call it $u_i$. The word $w$ of complexity $n$ is written on the other tape. Now we move the two heads of our Turing machine along the two tapes, letter by letter in sync. If the corresponding letters on the two tapes are not the same, then we move the head of the first tape to the beginning of the word $u_i$, keep the other head where it is, and resume the process starting with this configuration.

When we do encounter the letter $x_i$ on the second tape, we add 1 to the counter. At the end (when we get to the end of the second tape), we just read the result off the counter.

This algorithm takes linear time in $n$ because the number of returns of the first head to the beginning of the first tape is bounded by $n$.

\end{proof}

Thus, we can check in time $O(n)$ whether or not the word $w$ belongs to $X'_{n,1}$. If it does not, then we are done. If it does, then we move on to checking the exponent sum on $x_{s+1}$ to see if $w$ belongs to $X'_{n,2}$, and so on. Since by Lemma \ref{lemma9}, cardinalities of the sets in this chain (and therefore
the densities $\frac{|X'_{n,j}|}{|W_n|}$) decrease in geometric progression, by formula (\ref{avcase3}) the average time of our checking procedure will be $O(n) (1 + 1/2 +1/4+\ldots) = O(n)$.

Finally, we need to consider words where the exponent sum on every $x_i$ is divisible by $p$. The density of the set of these words is less than $\exp(-n^{\theta})$, as we mentioned before the statement of Lemma \ref{lemma10}. Therefore, the contribution of this set to the sum  (\ref{avcase3}) is also $O(n)$. This is because the equality of any word from this set to 1 is verified in time  $O(\exp (n^{\theta}))$ by one of the conditions of Theorem \ref{Theorem 7}, hence the formula (\ref{avcase3}) gives a linear estimate. This completes the proof of the theorem in the case $p \ge 3$.


The remaining case is where the variety $\mathcal{M}$ is of exponent $p=2$, i.e., satisfies the identity $x^2=1$. In this case, $\mathcal{M}$ is abelian, and for a word $w$ to be an identity on $\mathcal{M}$ it is {\it necessary and sufficient} that the sum of exponents on each letter in $w$ is even.

The difference from the case $p>2$ in estimating the average-case complexity is as follows. In the proof of Lemma \ref{lemma9}, to obtain inequalities like $|X_{n,1}|\le \frac{|W_n-Z_n|}{2}$, we replaced the leftmost occurrence of a letter $x_s^{\pm 1}$ with $x_s^{\mp 1}$, thereby changing the exponent sum $\pmod p$ on $x_s$. This trick does not work if $p=2$. Instead, one can split the set of all letters of complexity $k$ into pairs of letters with indices different by 1 and replace $x_i^{\pm 1}$ with a letter that was paired  with $x_i^{\pm 1}$. The number of sets $X_{n,r}$ in this case will be half of what it was in the case $p>2$, and the time needed to compute the exponent sum $\pmod p$ on two letters (instead of just one) will be doubled. Nonetheless, the linear time estimate $O(n) (1 + 1/2 +1/4+\ldots) = O(n)$
will still hold, and this completes the proof of Theorem \ref{Theorem 7}.

$\Box$

\section{Open problems}\label{problems}

In this section, we suggest some directions for future research. Some of the problems below are somewhat  informal while others are more precise.

\begin{itemize}

%

\item[1.]  Is it true that the average-case time complexity of the word problem in any
finitely generated metabelian group is linear?

We note that in \cite{KMS}, the authors have shown that there are finitely generated solvable groups of class 3 (even residually finite ones) with super-exponential worst-case complexity of the word problem. The average-case time complexity of the word problem in such groups cannot be polynomial since the super-exponential runtime, even on a very small set of words, will dominate everything else.

Also, unlike in solvable groups of derived length $\ge 3$ \cite{OK}, the word problem in any finitely presented metabelian group is solvable. This follows from the fact that these groups are residually finite, but the first explicit algorithm was offered in \cite{Timoshenko}.
\medskip

\item[2.] Is the average-case time complexity of the word problem in any one-relator group linear?
Polynomial?

We note that it is not known whether the worst-case time complexity of the word problem in any one-relator group is polynomial (cf. \cite[Problem (OR3)]{BMS}).
\medskip

\item[3.] {\bf (a)} Can our $O(n \log^2 n)$ estimate for the worst-case complexity of the
word problem in groups of matrices over integers (or rationals) be improved?
\medskip

\noindent {\bf (b)} Is the average-case time complexity of the word problem in any finitely generated group of matrices over $\Q$ linear?
\medskip

\item[4.] The (worst-case) complexity of the word problem in a finite group is obviously linear,
but this may not be the case for the identity problem. We ask: given a finite group $G$, is there a polynomial-time algorithm for solving the identity problem in $G$?
\medskip

\item[5.] {\bf (a)} Is the average-case time complexity of the word problem in a free
Burnside group $B(m, d)$ of a sufficiently large odd exponent $d$ linear? It is known that the word problem in $B(m, d)$ is in the class {\bf NP}, see \cite[28.2]{Olshanskii}.
\medskip

\noindent {\bf (b)} Is the average-case time complexity of the identity problem in a Burnside variety $\mathcal{B}_d$ of groups of a sufficiently large odd exponent $d$ linear?
\medskip

\item[6.] Let $G$ be a finitely generated (or even a finitely presented) group where the average-case time
complexity of the word problem is linear. Is it necessarily the case that the generic-case time complexity of the word problem in $G$ is linear?

We note 
that if the sums in the formula (\ref{avcase1}) are bounded by a linear function of $n$, then for any superlinear function $f(n)$ the densities of subsets of $G$ where $T(w)>f(n)$ (for $w\in W_n$) are approaching 0 when $n$ goes to infinity. This means that if the average-case time complexity of the word problem in $G$ is linear, then the generic-case time complexity of the word problem in $G$ can be bounded by any superlinear function.

A similar argument shows that if the average-case time complexity of the word problem in $G$ is polynomial, then the generic-case time complexity of the word problem in $G$ is polynomial, too, although the degree of a polynomial may be a little higher. More accurately, if the average-case complexity is $O(n^d)$, then the generic-case complexity is $O(n^{d+\varepsilon})$ for any $\varepsilon > 0$.

Another implication that we can mention in this context is that the exponential time average-case time complexity of the word problem in $G$ implies the exponential time (as opposed to super-exponential time) worst-case complexity of the word problem in $G$. This is because super-exponential complexity of the word problem even on just a single input will dominate everything else, so the average-case complexity would be super-exponential as well.

\end{itemize}

%

\end{document}